\newcommand{\HT}{H^2(\triangle_H)}
\newcommand{\HD}{H^{2}(\mathbb{D}^2)}
\newcommand{\HDO}{H^{2}(\mathbb{D})}
\newcommand{\TRH}{\triangle_H}
\newcommand{\HTO}{H^2_+(\TRH)}
\theoremstyle{plain}
\newtheorem{theorem}{Theorem}[section]
\newtheorem{lemma}[theorem]{Lemma}
\newtheorem{proposition}[theorem]{Proposition}
\newtheorem{corollary}[theorem]{Corollary}
\theoremstyle{definition}
\newtheorem{definition}[theorem]{Definition}
\newtheorem{example}[theorem]{Example}
\theoremstyle{remark}
\newtheorem{remark}[theorem]{Remark}
\newtheorem{case-new}{Case}
\numberwithin{equation}{section}
\newcommand{\ncom}{\newcommand}
\ncom{\bq}{\begin{equation}}
\ncom{\eq}{\end{equation}}
\ncom{\beqn}{\begin{eqnarray*}}
\ncom{\eeqn}{\end{eqnarray*}}
\ncom{\beq}{\begin{eqnarray}}
\ncom{\eeq}{\end{eqnarray}}
\ncom{\nno}{\nonumber}
\ncom{\rar}{\rightarrow}
\ncom{\Rar}{\Rightarrow}
\ncom{\noin}{\noindent}
\ncom{\bc}{\begin{centre}}
\ncom{\ec}{\end{centre}}
\ncom{\sz}{\scriptsize}
\ncom{\rf}{\ref}
\ncom{\sgm}{\sigma}
\ncom{\Sgm}{\Sigma}
\ncom{\dt}{\delta}
\ncom{\Dt}{Delta}
\ncom{\lmd}{\lambda}
\ncom{\Lmd}{\Lambda}
\ncom{\eps}{\epsilon}
\ncom{\pcc}{\stackrel{P}{>}}
\ncom{\dist}{{\rm\,dist}}
\ncom{\sspan}{{\rm\,span}}
\ncom{\im}{{\rm Im\,}}
\ncom{\sgn}{{\rm sgn\,}}
\ncom{\ba}{\begin{array}}
\ncom{\ea}{\end{array}}
\ncom{\eop}{\hfill{{\rule{2.5mm}{2.5mm}}}}
\ncom{\eoex}{\hfill{$\diamond$}}
\ncom{\eoe}{\hfill{{\rule{1.5mm}{1.5mm}}}}
\ncom{\eof}{\hfill{{\rule{1.5mm}{1.5mm}}}}
\ncom{\hone}{\mbox{\hspace{1em}}}
\ncom{\htwo}{\mbox{\hspace{2em}}}
\ncom{\hthree}{\mbox{\hspace{3em}}}
\ncom{\hfour}{\mbox{\hspace{4em}}}
\ncom{\hsev}{\mbox{\hspace{7em}}}
\ncom{\vone}{\vskip 2ex}
\ncom{\vtwo}{\vskip 4ex}
\ncom{\vonee}{\vskip 1.5ex}
\ncom{\vthree}{\vskip 6ex}
\ncom{\vfour}{\vspace*{8ex}}
\ncom{\norm}{\|\;\;\|}
\ncom{\integ}[4]{\int_{#1}^{#2}\,{#3}\,d{#4}}
\ncom{\inp}[2]{\langle{#1},\,{#2} \rangle}
\ncom{\Inp}[2]{\Langle{#1},\,{#2} \Langle}
\ncom{\vspan}[1]{{{\rm\,span}\#1 \}}}
\ncom{\dm}[1]{\displaystyle {#1}}
\ncom{\Dm}{\textnormal{d}m}
\ncom{\cred}{\color{red}}
\ncom{\cpl}{\color{purple}}
\ncom{\cblue}{\color{blue}}
\keywords{Hartogs triangle, Hardy space, Beurling-type submodules, quotient modules, doubly commuting, essential normality}
\subjclass[2020]{32Q02, 32H10, 47A15, 47A20, 30H10.}
\begin{document}
\title[doubly commutativity and essential normality of modules]{On modules of the Hardy space of Hartogs triangle}
\author[Arup Chattopadhyay]{Arup Chattopadhyay}
\address{Department of Mathematics, Indian Institute of Technology Guwahati, Guwahati, 781039, India}
\email{arupchatt@iitg.ac.in, 2003arupchattopadhyay@gmail.com}

\author[Saikat Giri]{Saikat Giri}
\address{Department of Mathematics, Indian Institute of Technology Guwahati, Guwahati, 781039, India}
\email{saikat.giri@iitg.ac.in, saikatgiri90@gmail.com}

\author[Shubham Jain]{Shubham Jain}
\address{Department of Mathematics, Indian Institute of Technology Guwahati, Guwahati, 781039, India}
\email{shubjainiitg@iitg.ac.in, shubhamjain4343@gmail.com}

\begin{abstract} 
In this paper, we investigate the structure of doubly commuting submodules and quotient modules of the Hardy space $H^2(\triangle_H)$ over the Hartogs triangle. We establish a complete classification of doubly commuting submodules. In addition, we characterize all doubly commuting quotient modules of the form $(\theta_1(z/w)\theta_2(w)H^2(\triangle_H))^\perp$, where $\theta_1$ and $\theta_2$ are inner functions on the unit disc. This is achieved by introducing the concept of $\varphi$-doubly commuting quotient modules on the Hardy space $H^2(\mathbb D^2).$ We further explore the essential normality and doubly commutativity of quotient modules of the form $(pH^2(\triangle_H))^\perp$ under some mild assumptions on $p$, where $p$ is a polynomial in two variables.
\end{abstract}

\maketitle
\section{Introduction}
The domain Hartogs triangle, denoted by $\TRH,$ is defined as 
\beqn
\TRH:=\{(z, w)\in\mathbb{C}^2:|z|<|w|<1\}.
\eeqn
$\TRH$ is a bounded and pseudoconvex domain. However, unlike the unit polydisc or the unit ball, it is neither monomially convex nor polynomially convex (see \cite{S2015}). Consider the biholomorphism $\varphi$ from $\TRH$ onto $\mathbb{D}\times \mathbb{D}^*$ given by the map 
\beq \label{phi-map} \varphi(z,w)=\left(\varphi_1(z, w), \varphi_2(z, w)\right)=\left(\frac{z}{w}, w \right), \quad (z, w) \in \TRH,~\mathbb{D}^*=\mathbb{D}\backslash\{0\}.
\eeq
Note that the Jacobian $J_{\varphi}$ of $\varphi$ is given by $J_{\varphi}(z,w)=\frac{1}{w}$ for all $(z, w) \in \TRH$.
Furthermore, the inverse map $\varphi^{-1}$ and the Jacobian $J_{\varphi^{-1}}$ of $\varphi^{-1}$ are given by
\allowdisplaybreaks
\beq \label{phi inverse}
\varphi^{-1}(z,w)=\Big(zw, w\Big),\quad J_{\varphi^{-1}}(z,w)=w, \quad (z, w)\in\mathbb{D}\times\mathbb{D}^*. 
\eeq

Recall from \cite[Section 3]{Mo21} (see also \cite[Section 7]{CJP2023}) that the Hardy space $\HT$ on $\TRH$ is defined as
\begin{align*}
\HT=\Big\{f\in\textnormal{Hol}(\TRH): \sup_{s, t \in (0, 1)}\frac{1}{4\pi^2} \int_0^{2\pi}\int_0^{2\pi}  \!\Big|f\Big(st e^{i \theta}, te^{i \gamma}\Big)\Big|^2st^2 d\theta d \gamma<\infty\Big\}.
\end{align*}
Also, recall that the Hardy space $H^2(\mathbb D^2)$ on the unit bidisc $\mathbb D^2$ is defined as 
\beqn
H^2(\mathbb D^2):=\Big\{f \in \textnormal{Hol}(\mathbb D^2):\sup_{t \in [0,1)}\int_{\mathbb T^2}|f(tz_1, tz_2)|^2\Dm(z) < \infty \Big \},
\eeqn
where $\Dm(z)$ is the normalized Lebesgue measure on $\mathbb T^2$. The space $H^2(\mathbb D^2)$ may be identified with the closed subspace $H^2(\mathbb T^2)$ of $L^2(\mathbb T^2)$ (\cite[Theorem 3.4.3]{Ru69}).

The distinguished boundary $\partial_{d}\triangle_H$ of $\TRH$ is $\mathbb{T}^2$ (\cite[Remark 2.2]{JP2024}). It is shown in \cite[Section 3]{Mo21} (see also \cite[Proposition 2.4]{JP2024}) that the Hardy space $\HT$ is isometrically isomorphic to a closed subspace $H^2(\partial_d \triangle_H)$ of $L^2(\mathbb T^2)$, given by 
\beqn H^2(\partial_{d}\triangle_H):=\Big\{ f(\zeta)=\sum_{\alpha \in \mathcal {I}}a_{\alpha}\zeta^{\alpha}:\sum_{\alpha \in \mathcal {I}}|a_{\alpha}|^2<\infty \Big \},
\eeqn
where $\zeta=(\zeta_{1},\zeta_{2})\in\mathbb{T}^{2}$, and $\mathcal{I}=\{\alpha=(\alpha_{1},\alpha_{2})\in \mathbb{Z}^2:\alpha_1\ge 0,~\alpha_{1}+\alpha_{2}+1 \ge 0\}$.

We now recall the following relation from \cite[Proposition 3.4]{Mo21} between the inner products of $H^2(\triangle_{H})$ and $H^{2}(\mathbb{D}^{2})$: for $f,g\in\HT$, we have
\begin{align}\label{Inner Product Relation}
\langle f,g\rangle_{\HT}=\langle \Psi(f),\Psi(g)\rangle_{\HD},
\end{align}
where the map $\Psi : H^2(\partial_d\triangle_H)\rar H^2(\mathbb T^2)$ is unitary and given by 
\begin{align}\label{Psi-map}
\Psi(f)=J_{\varphi^{-1}}\cdot f\circ \varphi^{-1}, \quad f \in H^2(\partial_d\triangle_H).
\end{align}
Note that $\Psi^{-1}: H^2(\mathbb T^2)\rar H^2(\partial_d\triangle_H)$ is given by
\begin{align*}
\Psi^{-1}(f)=J_{\varphi}\cdot f\circ \varphi, \quad f \in H^2(\mathbb T^2).
\end{align*}
Consider the closed subspace $H^2_+(\TRH)$ of $\HT$ defined as
\beq \label{hdinht}
H^2_+(\TRH) := \left\{ f \in \HT : f(z,w) = \sum_{i,j \ge 0} a_{i,j} z^i w^j \right\}.
\eeq
By Abel’s lemma \cite[Lemma 1.5.8]{S2005}, it follows that the map $R: \HD \to H^2_+(\TRH)$, given by $R(f) = f|_{\TRH}$, is unitary. Hence, we may identify $\HD$ as a subspace of $\HT$ via the map $R$.

We denote by $H^{\infty}(\Omega)$ the Banach algebra of all bounded holomorphic functions on $\Omega$, endowed with the sup norm, where $\Omega=\mathbb{D}^{2}$ or $\TRH$.

\begin{remark}\label{hd-inner-ht}
Note that $H^{\infty}(\mathbb{D}^{2})\subsetneq H^{\infty}(\TRH)$, and moreover $\phi\in H^{\infty}(\TRH)$ if and only if $\phi\circ\varphi^{-1}\in H^{\infty}(\mathbb{D}^{2})$ (see \cite[Remark 3.1]{JP2024}). Since $\varphi^{-1}(\mathbb{T}^{2})=\mathbb{T}^{2}$, therefore $\phi\in H^{\infty}(\TRH)$ is inner if and only if $\phi\circ\varphi^{-1}\in H^{\infty}(\mathbb{D}^{2})$ is inner (see \cite{JP2024}).\eof
\end{remark}
 
\subsection{Notations and conventions} 
Let $\mathbb{Z}$, $\mathbb{Z}_{+}$, and $\mathbb{N}$ denote the set of integers, nonnegative integers, and natural numbers, respectively. Let $\mathbb{D}$ and $\mathbb{T}$ denote the open unit disc and the unit circle in $\mathbb{C}$, respectively. For a bounded linear operator $T$ on a Hilbert space $\mathcal{H}$, we denote its range and kernel by $\operatorname{ran}(T)$ and $\operatorname{ker}(T)$, respectively. For a closed subspace $\mathcal{M}$ of a Hilbert space $\mathcal{H}$, the notation $\mathcal{H}\ominus\mathcal{M}$ denotes the orthogonal complement of $\mathcal{M}$ in $\mathcal{H}$. We denote by $\mathbb{C}[z, w]$ the ring of polynomials in the variables $z$ and $w$ with complex coefficients.

Let $\mathcal{H}$ be a Hilbert module \cite{DP1989} over $\mathbb{C}[z,w]$, and let $M_{z}$ and $M_{w}$ denote the module multiplication operators on $\mathcal{H}$. A closed subspace $\mathcal{S}$ (resp., $\mathcal{Q}$) of $\mathcal{H}$ is said to be a submodule (resp., quotient module) of $\mathcal{H}$ if $M_{\theta}(\mathcal{S})\subseteq\mathcal{S}$ (resp., $M_{\theta}^*(\mathcal{Q})\subseteq\mathcal{Q}$) for all $\theta\in\{z,w\}$.  A submodule $\mathcal{S}$ (or a quotient module $\mathcal{Q}$) is said to be \emph{nontrivial} if $\mathcal{S}\neq\{0\}$ (resp., $\mathcal{Q}\neq\{0\}$).

The module multiplication operators on a submodule $\mathcal{S}$ and a quotient module $\mathcal{Q}$ of a Hilbert module $\mathcal{H}$ are given by the restrictions $(S_{z},S_{w})$ and compressions $(Q_{z},Q_{w})$ of the module multiplication operators on $\mathcal{H}$, respectively. That is,
$$S_z=M_z|_{\mathcal{S}},\ \ S_w=M_w|_{\mathcal{S}}\quad\text{and}\quad Q_{z}=P_{\mathcal{Q}}M_{z}|_{\mathcal{Q}},\ \ Q_{w}=P_{\mathcal{Q}}M_{w}|_{\mathcal{Q}}.$$
Here and throughout, for any closed subspace $\mathcal{M}$ of a Hilbert space $\mathcal{H}$, we denote by $P_{\mathcal{M}}$ the orthogonal projection from $\mathcal{H}$ onto $\mathcal{M}$.

We further say that:
\begin{itemize}
\item A submodule $\mathcal{S}$ $(\mbox{or a quotient module }\mathcal{Q})$ is \emph{doubly commuting} if $[S_z^*, S_w] = 0$ $(\mbox{resp., } [Q_z^*, Q_w]=0)$;
\item A quotient module $\mathcal{Q}$ is \emph{essentially normal} if the commutator $[Q_{\phi}^*, Q_{\psi}]$ is compact for all $\phi, \psi \in \{z, w\}$.
\end{itemize}
Here $[\cdot,\cdot]$ denotes the commutator $[A,B]:=AB-BA$, for bounded linear operators $A,B$ on $\mathcal{H}$.

In this article, we primarily study two Hilbert modules, $\HD$ and $\HT$. The operators $M_z, M_w$ (resp., $M_1, M_2$) denote multiplication by $z$ and $w$ on $\HT$ (resp., $\HD$), respectively. It is shown in \cite[Proposition 8.3]{CJP2023} that
\begin{equation}\label{Diagram}
M_1M_2=\Psi M_z\Psi^{-1} \quad \mbox{ and }\quad M_2=\Psi M_w\Psi^{-1}.
\end{equation}

We use the notation $\mathcal{S}$ and $\mathfrak{S}$ to denote submodules of $\HT$ and $\HD$, respectively, with $\mathcal{Q}$ and $\mathfrak{Q}$ representing the corresponding quotient modules. The module multiplication operators on the submodules of $\HT$ and $\HD$ are denoted by $(S_z, S_w)$ and $(S_1, S_2)$, respectively, while the module multiplication operators on the corresponding quotient modules are denoted by $(Q_z, Q_w)$ and $(Q_1, Q_2)$.

\subsection{Main results:} The theory of invariant subspaces has remained a central area of investigation in operator theory for several decades. Classical results, most notably Beurling’s theorem on the invariant subspaces of the Hardy space $\HDO$ \cite{B1949}, have laid the foundation for extensive developments in both the one-variable and multivariable settings. In several complex variables, the invariant subspace problem becomes significantly more intricate due to the absence of a full analog of Beurling's theorem. Nonetheless, substantial progress has been made in specific cases, such as doubly commuting shifts; see, for instance, \cite{Ru69, M1988, GM1988} and references therein, and in the study of backward shift-invariant subspaces of $H^2(\mathbb{D}^n) $; see, for instance, \cite{DY2000, DY1998, INS2004, III2011, Sarkar14} and references therein. The essential normality of polynomially generated quotient modules of $H^2(\mathbb{D}^n)$ has been thoroughly studied; see, for example, \cite{DM1993, GuWa2007, GW2009, DaGoSa20}. These directions continue to inspire the structural analysis of submodules and quotient modules of Hilbert spaces of analytic functions. While the literature on invariant subspaces and essential normality is vast, we restrict our attention to the works most relevant to our present considerations.

This work is devoted to the classification of doubly commuting submodules and a partial classification of doubly commuting quotient modules, with an analysis of the essentially normal quotient modules of the form $(p\HT)^{\perp}$, where $p$ is a polynomial. We now state the main results of the paper in the order they are proved:
\vspace*{0.1cm}

\textbf{Section}~\ref{Sec2}: In \cite[Theorem 2]{M1988}, Mandrekar showed that a nontrivial submodule $\mathfrak{S}$ of $H^2(\mathbb{D}^2)$ is doubly commuting if and only if $\mathfrak{S}=q H^2(\mathbb D^2)$ for some inner function $q$ on $\mathbb{D}^2$. Motivated by this result, we prove the following in the context of $\HT.$

\begin{theorem}\label{Doubly-com-sub}
A nontrivial submodule $\mathcal{S}$  of $\HT$ is doubly commuting if and only if $\mathcal{S}=q\HTO$ for some unimodular function $q$ on $\triangle_H$.
\end{theorem}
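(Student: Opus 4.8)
The plan is to transport the problem to the bidisc. The isometry $\Psi$ of \eqref{Psi-map}, together with \eqref{Diagram}, turns a submodule of $\HT$ into a subspace of $\HD$ invariant under $M_1M_2$ and $M_2$, while the unitary $R:\HD\to\HTO$ identifies the submodule $\HTO$ of $\HT$, as a module, with $\HD$ equipped with its usual pair $(M_1,M_2)$ (indeed $R(z_1f)=zR(f)$ and $R(z_2f)=wR(f)$, so $R$ is a module map). Since $(M_1,M_2)$ is doubly commuting on $\HD$, this already shows $\HTO$ is a doubly commuting submodule of $\HT$; and if $q$ is unimodular on $\TRH$ — the operative point being that $M_q$ maps $\HTO$ isometrically into $\HT$ and commutes with $M_z,M_w$, which by \eqref{Diagram} and Remark~\ref{hd-inner-ht} is the same as $z_2\,(q\circ\varphi^{-1})$ being an inner function on $\mathbb D^2$ — then $M_q$ intertwines the module $\HTO$ unitarily with $q\HTO$, so $q\HTO$ is again a doubly commuting submodule (it is a submodule because $M_z\HTO,M_w\HTO\subseteq\HTO$). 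This settles the ``if'' direction.

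For the converse, let $\mathcal S$ be a nontrivial doubly commuting submodule of $\HT$ and put $\mathfrak S=\Psi(\mathcal S)$. By \eqref{Diagram}, $\mathfrak S$ is invariant under $M_1M_2$ and $M_2$, and $[S_z^*,S_w]=0$ reads $[(M_1M_2|_{\mathfrak S})^*,\,M_2|_{\mathfrak S}]=0$, i.e.\ $M_1M_2|_{\mathfrak S}$ commutes with $(M_2|_{\mathfrak S})^*$. The operator $M_2|_{\mathfrak S}$ is a pure isometry (a restriction of the shift $M_2$ on $\HD$, and $\bigcap_n M_2^n\mathfrak S\subseteq\bigcap_n M_2^n\HD=\{0\}$), so Wold's theorem gives $\mathfrak S=\bigoplus_{n\ge0}M_2^n\mathcal W$ with $\mathcal W=\mathfrak S\ominus M_2\mathfrak S=\ker\big((M_2|_{\mathfrak S})^*\big)$. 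Since $M_1M_2|_{\mathfrak S}$ commutes with $(M_2|_{\mathfrak S})^*$, it preserves $\mathcal W$, and using $M_1M_2^*=M_2^*M_1$ on $\HD$ one checks $M_1\mathcal W\perp\mathcal W$. Now $M_1M_2|_{\mathcal W}$ is again a pure isometry (a restriction of $M_{z_1z_2}$), so a second Wold decomposition gives $\mathcal W=\bigoplus_{m\ge0}(M_1M_2)^m\mathcal V$ with $\mathcal V=\mathcal W\ominus M_1M_2\mathcal W$, and combining the two, $\mathfrak S=\bigoplus_{b\ge a\ge0}M_1^aM_2^b\mathcal V$. Since $\mathcal S\ne\{0\}$ we have $\mathcal V\ne\{0\}$.

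The heart of the matter is to show $\dim\mathcal V=1$ and that a unit generator $v$ of $\mathcal V$ is inner on $\mathbb D^2$. Granting this, and using $\Psi(\HTO)=\overline{\mathrm{span}}\{z_1^iz_2^k:k\ge i+1\}$ (from \eqref{Psi-map}), one gets $\mathfrak S=\overline{\mathrm{span}}\{z_1^az_2^bv:b\ge a\ge0\}=(v/z_2)\,\Psi(\HTO)$, hence $\mathcal S=\Psi^{-1}(\mathfrak S)=q\HTO$ with $q=(v/z_2)\circ\varphi$, a function with unimodular boundary values on $\partial_d\triangle_H=\mathbb T^2$, as required. To prove $\dim\mathcal V=1$ I would pass to the closed $(M_1,M_2)$-submodule $\mathfrak T$ of $\HD$ generated by $\mathcal V$ (equivalently by $\mathfrak S$) and show that $\mathfrak T$ is doubly commuting in $\HD$; Mandrekar's theorem \cite[Theorem 2]{M1988} then gives $\mathfrak T=\Theta\HD$ for an inner $\Theta$ on $\mathbb D^2$, so $\mathfrak T\ominus(M_1\mathfrak T+M_2\mathfrak T)=\mathbb C\Theta$ is one-dimensional, and a direct argument identifying $\mathcal V$ with this last space (using $\mathcal V\perp M_2\mathfrak S$ and the orthogonalities $M_1^k\mathcal W\perp\mathcal W$, $k\ge1$) forces $\mathcal V=\mathbb C\Theta$ with $\Theta$ inner.

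The main obstacle is precisely this core step. The iterated Wold decomposition is by itself consistent with $\dim\mathcal V>1$, so proving $\dim\mathcal V=1$ (equivalently, that $\mathfrak T$ is doubly commuting, equivalently that $\mathcal W$ is a wandering subspace for the shift $M_1$) must genuinely use the rigidity of submodules of $H^2(\mathbb D^2)$ — for instance the fact that two nonzero Beurling-type submodules are never orthogonal. I anticipate two viable routes: (i) prove directly, from $M_1M_2(\mathcal W)\subseteq\mathcal W$ together with $\mathcal W\perp M_2\mathfrak S$, that $M_1^k\mathcal W\perp\mathcal W$ for all $k\ge1$ and that $M_1^aM_2^b\mathcal V\perp M_1^{a'}M_2^{b'}\mathcal V$ whenever $(a,b)\ne(a',b')$, so that $\mathfrak T=\bigoplus_{a,b\ge0}M_1^aM_2^b\mathcal V$ is an orthogonal sum with wandering subspace $\mathcal V$, hence doubly commuting; or (ii) apply Beurling--Lax--Halmos to the $M_2$-invariant subspace $\mathfrak S=\Theta_0H^2_{z_2}(\mathbb D;E_0)$ and use $M_1M_2$-invariance to force each $\mathrm{ran}\,\Theta_0(\zeta)$ to be a fixed Beurling subspace of $H^2_{z_1}$, after which double commutativity reduces the characteristic function to $z_2$ times a scalar inner function.
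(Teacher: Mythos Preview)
Your ``if'' direction is correct and essentially matches the paper's (the paper computes on the basis $\{qz^iw^j\}$; you argue by unitary intertwining via $M_q$ and $R$, which is cleaner).

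For the converse your route is genuinely different from the paper's main proof. The paper applies S\l{}oci\'nski's Wold-type decomposition \emph{directly on $\HT$}: $\mathcal S=\bigoplus_{m,n\ge0}z^mw^n(\mathcal W_1\cap\mathcal W_2)$ with $\mathcal W_i=\mathcal S\ominus M_{\ast}\mathcal S$, and then shows $\dim(\mathcal W_1\cap\mathcal W_2)=1$ by taking $g_1,g_2\in\mathcal W_1\cap\mathcal W_2$, transferring to $\mathbb T^2$ via $\varphi^{-1}$, and verifying enough Fourier-coefficient vanishings to force $(g_1\circ\varphi^{-1})\overline{(g_2\circ\varphi^{-1})}$ to be constant a.e.\ on $\mathbb T^2$. (The paper also gives a one-line alternate proof citing \cite[Corollary~4]{GM1988}.) Your plan instead transports first via $\Psi$, performs an iterated Wold decomposition with respect to $(M_2,M_1M_2)$, and reduces to Mandrekar's theorem on $\HD$.

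The gap you flag is not a real obstacle; your route (i) closes in a few lines, and you already have every ingredient. From $M_1M_2(\mathcal W)\subseteq\mathcal W$ and $\mathcal W\perp M_2\mathfrak S$ you get, for every $k\ge1$ and $u\in\mathcal W$,
\[
0=\langle (M_1M_2)^ku,\,M_2^kf\rangle=\langle M_1^ku,\,f\rangle\qquad(f\in\mathfrak S),
\]
so $M_1^k\mathcal W\perp\mathfrak S$ for all $k\ge1$ (not just $k=1$). Combining this with the two Wold decompositions gives pairwise orthogonality of $\{M_1^aM_2^b\mathcal V\}_{a,b\ge0}$: for $b\ge a$, $b'\ge a'$ it comes from the decompositions of $\mathfrak S$ and $\mathcal W$; for $a>b$, $b'\ge a'$ it is $M_1^{a-b}\mathcal W\perp\mathfrak S$; for $a>b$, $a'>b'$ cancel powers of $M_1$ and reduce to a previous case. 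Thus $\mathfrak T:=\overline{\operatorname{span}}\{M_1^aM_2^b\mathcal V:a,b\ge0\}=\bigoplus_{a,b\ge0}M_1^aM_2^b\mathcal V$ is an orthogonal wandering decomposition, so $(M_1|_{\mathfrak T},M_2|_{\mathfrak T})$ is unitarily equivalent to $(T_{z_1}\otimes I_{\mathcal V},\,T_{z_2}\otimes I_{\mathcal V})$ on $\HD\otimes\mathcal V$, hence doubly commuting. Mandrekar then gives $\mathfrak T=\Theta\HD$ with $\Theta$ inner, and since $\mathcal V=\mathfrak T\ominus(M_1\mathfrak T+M_2\mathfrak T)=\mathbb C\Theta$, your identification $\mathfrak S=(\Theta/z_2)\Psi(\HTO)$ and $q=(\Theta/z_2)\circ\varphi$ finishes the argument exactly as you wrote. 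So your proposal is correct once you stop hedging and execute route (i); no appeal to non-orthogonality of Beurling submodules or Beurling--Lax--Halmos is needed.
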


As a consequence, we establish that the rank of a doubly commuting submodule of $\HT$ is $1$ (Corollary \ref{Rank}). Furthermore, we show in Proposition \ref{NOT-Doubly-com-sub} that a submodule of the form $q\HT$ is never doubly commuting, where $q$ is an inner function on $\TRH$.
\vspace*{0.1cm}

\textbf{Section}~\ref{Sec3}:  In \cite[Theorem 2.1]{INS2004}, Izuchi, Nakazi and Seto provided a characterization of doubly commuting quotient modules of $\HD$ (see also \cite[Corollary 3.3]{Sarkar14}). We prove the following in the context of $\HT$:

\begin{theorem} \label{Hartogscharasepa}
For inner functions $\theta_{1}$ and $\theta_{2}$ on $\mathbb{D},$  the quotient module $\HT\ominus \theta_1(z/w)\theta_2(w)\HT$ is doubly commuting if and only if one of the following holds:
\begin{itemize}
\item [$\rm{(i)}$] $\theta_{1}$ and $\theta_{2}$ are constant functions.
\item [$\rm{(ii)}$] $\theta_1$ is constant and $\theta_2(w)=c\dfrac{w-b}{1- \bar b w}$ for some $b \in \mathbb D$ and $c \in \mathbb T.$
\item [$\rm{(iii)}$] $\theta_2$ is constant and $\theta_1(z)=c_1z$ for some $c_1 \in \mathbb{T}.$ 
\item [$\rm{(iv)}$] $\theta_1(z)=c_1z$ and $\theta_2(w)=c_2w$ for some $c_1, c_2\in \mathbb{T}.$
\end{itemize}
\end{theorem}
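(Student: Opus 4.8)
The plan is to transfer the question to the bidisc through the unitary $\Psi$ of \eqref{Psi-map} and then settle it by an explicit operator computation on $\HD$.

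First I would carry out the reduction. By Remark~\ref{hd-inner-ht} the function $\theta_1(z/w)\theta_2(w)$ is inner on $\TRH$, and since $\bigl(\theta_1(z/w)\theta_2(w)\bigr)\circ\varphi^{-1}(z,w)=\theta_1(z)\theta_2(w)$, formula \eqref{Psi-map} gives $\Psi\bigl(\theta_1(z/w)\theta_2(w)f\bigr)=\theta_1(z)\theta_2(w)\,\Psi(f)$ for every $f\in\HT$. Hence $\Psi$ maps the submodule $\theta_1(z/w)\theta_2(w)\HT$ onto $\theta_1(z)\theta_2(w)\HD$ (in particular the former is closed), and therefore carries $\mathcal Q$ onto $\mathfrak Q:=\HD\ominus\theta_1(z)\theta_2(w)\HD$. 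Writing $\widetilde Q_z:=P_{\mathfrak Q}M_1M_2|_{\mathfrak Q}$ and $\widetilde Q_w:=P_{\mathfrak Q}M_2|_{\mathfrak Q}$, relation \eqref{Diagram} shows that $Q_z,Q_w$ are unitarily equivalent to $\widetilde Q_z,\widetilde Q_w$; thus $\mathcal Q$ is doubly commuting if and only if $\mathfrak Q$ is $\varphi$-doubly commuting in the sense of Section~\ref{Sec3}, i.e. $[\widetilde Q_z^{\,*},\widetilde Q_w]=0$, and it remains to determine for which pairs $(\theta_1,\theta_2)$ this holds.

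Next I would set up coordinates. Put $\mathcal K_\theta:=\HDO\ominus\theta\HDO$ and use the orthogonal decomposition $\mathfrak Q=(\HDO\otimes\mathcal K_{\theta_2})\oplus(\mathcal K_{\theta_1}\otimes\theta_2\HDO)$. With respect to $\HDO=\mathcal K_\theta\oplus\theta\HDO$ the shift has the Sz.-Nagy--Foias block form $\left(\begin{smallmatrix}S_\theta&0\\ L_\theta&W_\theta\end{smallmatrix}\right)$, where $W_\theta$ is the (isometric) shift on $\theta\HDO$, $S_\theta$ is the compressed shift on $\mathcal K_\theta$, and $L_\theta$ is a rank-one operator with range $\mathbb C\theta$. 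Compressing $M_1$, $M_2$, $M_1M_2$ to $\mathfrak Q$ one writes $\widetilde Q_w$ and $\widetilde Q_z$ as $2\times2$ lower-triangular operator matrices on this decomposition (a by-product worth recording is $\widetilde Q_z=Q_1\widetilde Q_w$, where $Q_1=P_{\mathfrak Q}M_1|_{\mathfrak Q}=(M_z\otimes I)\oplus(S_{\theta_1}\otimes I)$ is block-diagonal). For the sufficiency of (i)--(iv), in each of those four cases one of $\mathcal K_{\theta_1},\mathcal K_{\theta_2}$ is $\{0\}$, or both are one-dimensional, so the matrices for $\widetilde Q_z,\widetilde Q_w$ become elementary and $[\widetilde Q_z^{\,*},\widetilde Q_w]=0$ follows by inspection; for instance $\mathfrak Q=\{0\}$ in (i), $\widetilde Q_z=0$ in (iii) and (iv), while in (ii) the operators $\widetilde Q_z$ and $\widetilde Q_w$ are, respectively, a scalar multiple of $M_z\otimes I$ and a scalar multiple of the identity.

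For necessity, assuming $[\widetilde Q_z^{\,*},\widetilde Q_w]=0$ I would compute its four block entries. Using the identities $W_{\theta_2}^*L_{\theta_2}=0$, $L_{\theta_2}^*W_{\theta_2}=0$, $L_{\theta_2}L_{\theta_2}^*=(1-|\theta_2(0)|^2)P_{\mathbb C\theta_2}$ and $L_{\theta_2}^*L_{\theta_2}=g_2\otimes g_2$ (with $g_2:=M_w^*\theta_2$ and $\|g_2\|^2=1-|\theta_2(0)|^2$), the off-diagonal and $(2,2)$ blocks collapse to multiples of $(P_{\mathcal K_{\theta_1}}M_z^*)\otimes(L_{\theta_2}S_{\theta_2}^*)$ and of $S_{\theta_1}^*\otimes P_{\mathbb C\theta_2}$; since $M_z^*$ is onto, their vanishing forces $\theta_2(0)=0$ or $S_{\theta_1}=0$, and $S_{\theta_1}=0$ is equivalent to $\theta_1$ being constant or $\theta_1(z)=c_1z$. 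The decisive entry is the $(1,1)$ block, which equals $M_z^*\otimes[S_{\theta_2}^*,S_{\theta_2}]+\widehat S_{\theta_1}\otimes(g_2\otimes g_2)$ with $\widehat S_{\theta_1}:=S_{\theta_1}^*P_{\mathcal K_{\theta_1}}$ regarded as an operator on $\HDO$. When $\theta_2$ is non-constant, applying this block to vectors $1\otimes h$ yields $\widehat S_{\theta_1}(1)=0$, hence $\theta_1$ constant or $\theta_1(0)=0$; applying it to $z\otimes h$ then pins $[S_{\theta_2}^*,S_{\theta_2}]$ down to an explicit rank-one operator, and comparing the $\mathbb C1$-entry of the latter with the block form of $[S_{\theta_2}^*,S_{\theta_2}]$ relative to $\mathcal K_{\theta_2}=\mathbb C1\oplus w\mathcal K_\psi$ (valid when $\theta_2(0)=0$, written $\theta_2=w\psi$), whose $(1,1)$-entry equals $1-|\psi(0)|^2>0$ for non-constant $\psi$, produces a contradiction unless $\psi$ is constant. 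Running this through the case split — first on whether $\theta_2$ is constant, then on whether $\theta_1$ is constant — and repeatedly using the standard fact that $[S_\theta^*,S_\theta]=0$ if and only if $\theta$ is constant or a single Blaschke factor $c\frac{w-b}{1-\bar bw}$, one arrives at exactly the four alternatives (i)--(iv).

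The main obstacle is this $(1,1)$-block analysis in the case where neither $\theta_1$ nor $\theta_2$ is constant, where one has to squeeze the contradiction out of the interaction between $\widehat S_{\theta_1}$ and $[S_{\theta_2}^*,S_{\theta_2}]$ on low-degree vectors. A subsidiary point requiring care is that the $2\times2$ block formalism degenerates when a model space is trivial, so the subcases in which $\theta_1$ or $\theta_2$ is constant must be handled separately, directly from the definition of $\widetilde Q_z,\widetilde Q_w$ on the correspondingly simpler $\mathfrak Q$.
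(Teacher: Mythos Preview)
Your reduction to the bidisc through $\Psi$ matches the paper exactly (it is Lemma~\ref{Relation_Quotient module actions} together with~\eqref{Equivalent_DC} and~\eqref{Transference}), so the content of the proof is your characterization of $\varphi$-doubly commuting quotient modules of the form $\HD\ominus\theta_1(z)\theta_2(w)\HD$. Here your route diverges sharply from the paper's. The paper proves Theorem~\ref{finalcharasepa} by an extended case distinction on the canonical factorization of $\theta_1$ and $\theta_2$ into Blaschke and singular parts (Corollary~\ref{keycoro1}, Lemmas~\ref{Keylemma1}--\ref{keylemma3}, Theorem~\ref{maincharasepa}, plus Theorem~\ref{DC+varDC} for the doubly commuting subfamily), in each case exhibiting an explicit test vector $f\in\mathfrak Q$ for which $Q_2^*Q_1Q_2f\neq Q_1Q_2Q_2^*f$. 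You instead exploit the orthogonal splitting $\mathfrak Q=(\HDO\otimes\mathcal K_{\theta_2})\oplus(\mathcal K_{\theta_1}\otimes\theta_2\HDO)$ and the Sz.-Nagy--Foias block form of the shift to write $[\widetilde Q_z^{\,*},\widetilde Q_w]$ as a $2\times2$ operator matrix whose entries factor through the rank-one operators $L_{\theta_j}$ and the defect $[S_{\theta_2}^*,S_{\theta_2}]$; the vanishing of the off-diagonal and $(2,2)$ blocks forces $\theta_2(0)=0$ or $S_{\theta_1}=0$, and the $(1,1)$ block then yields the remaining constraints via a trace argument (or, equivalently, comparison with $[S_{\theta_2}^*,S_{\theta_2}]=(1\otimes1)-(g_2\otimes g_2)$). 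Your approach is correct and more structural: once the block computation is set up, the whole classification falls out of the single model-theoretic fact that $S_\theta$ is normal precisely when $\dim\mathcal K_\theta\le1$, and you never need to separate singular and Blaschke factors. The paper's approach, by contrast, stays closer to function theory and is self-contained without invoking the defect-operator formalism, at the cost of a much longer case analysis. One point to tighten in your write-up: the vanishing of the $(2,1)$ block actually gives the stronger conclusion $\theta_2(0)=0$ or $\theta_1$ \emph{constant} (not merely $S_{\theta_1}=0$), and in the $(1,1)$-block step the subcase $\theta_1(0)=0$, $\theta_1'(0)=0$ (where $\widehat S_{\theta_1}(z)=1$) should be ruled out explicitly---this is immediate from the trace identity $\operatorname{tr}[S_{\theta_2}^*,S_{\theta_2}]=|\theta_2(0)|^2=0$.
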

To achieve this, we introduce the notion of $\varphi$-doubly commuting quotient modules of $\HD$ (see Definition \ref{phi-dc}). We begin by providing a complete characterization of $\varphi$-doubly commuting quotient modules within the class of doubly commuting quotient modules (see Theorem \ref{DC+varDC}). This with Corollary \ref{keycoro1},  Lemmas \ref{Keylemma1}-\ref{keylemma3} and Theorem \ref{maincharasepa} yields complete characterization of $\varphi$-doubly commuting quotient modules of the form $\HD\ominus \theta_1(z)\theta_2(w)\HD,$ where $\theta_1$ and $\theta_2$ are inner functions on $\mathbb D.$ (see Theorem \ref{finalcharasepa}). We end this section by providing a class $\varphi$-doubly commuting quotient modules where the inner function is not separating (see Proposition \ref{notsepatheta}).
\vspace*{0.1cm}

\textbf{Section} \ref{Sec4}: In \cite{DM1993}, Douglas and Misra established by a direct calculation that $\HD\ominus(z-w)^{2}\HD$ is essentially normal, while $\HD\ominus z^{2}\HD$ is not. Later, in \cite{GuWa2007}, Guo and Wang provided a characterization of the homogeneous polynomials $p\in\mathbb{C}[z,w]$ for which the quotient module $\HD\ominus p\HD$ is essentially normal (see also \cite{Clark}, \cite{DaGoSa20}, \cite{GW2009}, and the references theirin). In the following result, we have completely characterized polynomials $p\in\mathbb{C}[z,w]$ with some assumptions on $p$ for which the associated quotient module is not essentially normal: 

\begin{theorem}\label{Not essentially normal Theorem}
Let $p$ be a polynomial in the variables $z$ and $w$ represented as 
\beqn
p(z,w)=a z+ b w + cw^2 +w^3q(w) +z^2r(z)+\sum_{i,j \ge 1} a_{i,j} z^i w^j,
\eeqn
where $a, b, c$ are scalars, and $q(w)$ and $r(z)$ are single variable polynomials such that $\mathcal{Q}_p=\HT\ominus p\HT$ satisfies \eqref{Class-p}. Then $\mathcal{Q}_p$ is not essentially normal if and only if $a=b=0$.
\end{theorem}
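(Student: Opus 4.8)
The plan is to transport the problem to the bidisc via the unitary $\Psi$ of \eqref{Psi-map} and the relations \eqref{Diagram}, and then to reduce essential normality of $\mathcal{Q}_p$ to a statement about the compression operators on a known quotient module of $\HD$. First I would observe that, since $\Psi M_z \Psi^{-1} = M_1 M_2$ and $\Psi M_w \Psi^{-1} = M_2$, the submodule $p\HT$ corresponds under $\Psi$ to $\Psi(p\HT)$, and because $\Psi$ is multiplication by $J_{\varphi^{-1}}(z,w)=w$ composed with $\varphi^{-1}$, the image of $p\HT$ is $(p\circ\varphi^{-1})\cdot w\cdot H^2_+(\cdots)$; the key point (cf.\ Remark~\ref{hd-inner-ht} and the identification $R$) is that $p\circ\varphi^{-1}$ is, up to the Jacobian factor, a polynomial-type symbol on $\mathbb D^2$ whose low-order behavior is dictated precisely by the coefficients $a$ (the $z$-term, which becomes the $z_1 z_2$-term after applying $\varphi^{-1}(z,w)=(zw,w)$), $b$ (the $w$-term), $c$ (the $w^2$-term), and the higher mixed terms. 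Under $\varphi^{-1}$ one has $z\mapsto z_1 z_2$, $z^2 r(z)\mapsto z_1^2 z_2^2 r(z_1 z_2)$, $w^j\mapsto z_2^j$, and $z^i w^j \mapsto z_1^i z_2^{i+j}$, so $p\circ\varphi^{-1}$ is divisible by $z_2$ exactly when $a\neq 0$ is absent, i.e.\ the structural dichotomy $a=b=0$ versus not corresponds on the bidisc side to whether the transported symbol vanishes to order $\ge 2$ in $z_2$ along $\{z_2=0\}$ after removing the obvious $z_2$ factor.

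Next I would set up the commutators. Essential normality of $\mathcal{Q}_p$ means $[Q_\phi^*,Q_\psi]$ is compact for $\phi,\psi\in\{z,w\}$; using \eqref{Diagram} and the unitary equivalence, $Q_w$ is unitarily equivalent to the compression of $M_2$ to $\Psi(\mathcal{Q}_p)$ and $Q_z$ to the compression of $M_1M_2$. So it suffices to analyze the three commutators $[Q_w^*,Q_w]$, $[Q_w^*,Q_z]$, $[Q_z^*,Q_z]$ on the bidisc-side quotient module $\mathfrak Q_p := \HD \ominus (p\circ\varphi^{-1})\HD$ — here I am using that the extra Jacobian weight $w=z_2$ can be absorbed because multiplication by $z_2$ is an isometry on $\HD$ up to the rank-one defect, so it does not affect compactness of commutators (this needs a short lemma). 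The ``if'' direction ($a=b=0 \Rightarrow$ not essentially normal): when $a=b=0$ the polynomial $p$ has the form $cw^2 + w^3 q(w) + z^2 r(z) + \sum_{i,j\ge 1}a_{i,j}z^iw^j$, and I would show that the transported symbol then has a zero of order $\ge 2$ on a boundary direction, which by the Douglas–Misra-type obstruction (the prototype being that $\HD\ominus z^2\HD$ is not essentially normal, cited in the excerpt) forces $[Q_z^*,Q_z]$ or $[Q_w^*,Q_w]$ to be non-compact; concretely I would exhibit a non-compact piece by testing the commutator against the reproducing kernels $k_\lambda$ as $\lambda$ approaches the relevant boundary face and showing the norm does not decay. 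The ``only if'' direction: if $a\neq 0$ or $b\neq 0$, then the transported symbol vanishes to order exactly $1$ along the pertinent boundary components (it is ``non-degenerate'' there), and I would invoke, or adapt, the homogeneous/essentially-normal criteria of Guo–Wang \cite{GuWa2007} and the assumption \eqref{Class-p} to conclude all three commutators are compact.

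The main obstacle, I expect, is the bookkeeping in the reduction to the bidisc — precisely controlling how the Jacobian factor $w=z_2$ interacts with the quotient module and verifying that it does not create or destroy compactness, and identifying cleanly the ``non-degeneracy along boundary faces'' condition that \eqref{Class-p} is presumably designed to encode. A secondary difficulty is the ``only if'' direction: showing compactness of $[Q_z^*,Q_z]$ (as opposed to $[Q_w^*,Q_w]$) when $a\neq 0$, since $Q_z$ corresponds to the two-variable symbol $z_1 z_2$ and its defect operator $I-Q_zQ_z^*$ is more delicate than in the single-shift case; here I would likely factor $z_1z_2$ and use the doubly-commuting structure results from Section~\ref{Sec3}, together with the fact that products and sums of compact operators are compact, to assemble the conclusion from the two one-variable pieces. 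The case $a=0, b\neq 0$ needs separate attention because then the transported symbol is $z_2(\text{unit-like}) + \cdots$, borderline between the two regimes, and I would treat it by the same kernel-testing computation used for the ``if'' direction but checking that the relevant limit is now nonzero finite rather than divergent, giving compactness.
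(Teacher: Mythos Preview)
Your plan to transport everything to $\HD$ via $\Psi$ and then invoke Guo--Wang is a genuinely different route from the paper, but as written it contains a real gap. The unitary transfer is fine: one does get $\Psi(p\HT)=(p\circ\varphi^{-1})\HD$ (no stray Jacobian factor survives, so your ``short lemma'' about absorbing $z_2$ is unnecessary and the discussion around it is confused). The problem is what happens next. By Lemma~\ref{Relation_Quotient module actions}, $Q_w$ is unitarily equivalent to $Q_2$ but $Q_z$ is unitarily equivalent to $Q_1Q_2$, so essential normality of $\mathcal{Q}_p$ translates to compactness of $[Q_1Q_2,(Q_1Q_2)^*]$, $[Q_2,Q_2^*]$ and $[Q_1Q_2,Q_2^*]$ on $\HD\ominus(p\circ\varphi^{-1})\HD$. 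This is \emph{not} the standard essential normality condition for $(Q_1,Q_2)$ that \cite{GuWa2007} characterizes, and $p\circ\varphi^{-1}$ is not homogeneous, so the Guo--Wang criterion cannot be invoked as a black box in either direction. Your ``only if'' argument therefore has no engine. You also misread the role of \eqref{Class-p}: it is not a boundary non-degeneracy condition but the graded decomposition $\mathcal{Q}_p=\bigoplus_m(\mathcal{Q}_p\cap\mathfrak{F}_m)$, and it is precisely this hypothesis that the paper exploits.

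The paper's proof stays entirely inside $\HT$ and is elementary. For $a=b=0$ it checks directly that the basis vectors $E^k_k$ and $E^k_{k-1}$ lie in $\mathcal{Q}_p$ for all $k\ge 1$ and computes $[Q_z^*,Q_z]E^k_k=E^k_k$, so $[Q_z^*,Q_z]$ is not compact (Lemma~\ref{Not essentially normal lemma} and Proposition~\ref{a=b=0case}). For $a\neq 0$ or $b\neq 0$ it uses \eqref{Class-p} to reduce to computing each finite-dimensional slice $\mathcal{Q}_p\cap\mathfrak{F}_m$; a case analysis on $(a,b,c)$ (Theorem~\ref{Essential normal theorem}) shows these slices are eventually one-dimensional, spanned by $E^m_m$, from which compactness of all commutators is immediate. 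No reproducing-kernel asymptotics, no bidisc transfer, and no appeal to \cite{GuWa2007} are needed.
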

To this end, we carry out a detailed analysis of the dimension of $ \mathcal{Q}_p \cap \mathfrak{F}_m$ for  $m \in \mathbb{Z}_+ $ and $p \in \mathbb C[z, w],$ where $ \mathfrak{F}_m $ denotes the span of the functions $\frac{1}{w} \left( \frac{z}{w} \right)^{m-j} w^{j}$ for $0 \le j \le m$ (see Subsection \ref{Dim analysis}). We conclude the section by giving a class of doubly commuting quotient modules in this setting (see Proposition \ref{dcpolycase}).

The paper includes many examples and counterexamples, along with several supporting results that may be of independent interest. This paper is nearly self-contained.

\section{Submodules: doubly commuting and properties}\label{Sec2}
This section focuses on the study of doubly commuting submodules of $\HT$ and explores some of their applications.
It is easy to see that $\HD$ is a doubly commuting submodule of itself (see, e.g., \cite[Theorem 2]{M1988}). In contrast, $\HT$ is not a doubly commuting submodule of itself. The following result shows that the Beurling-type submodules of $\HT$ are never doubly commuting.
\begin{proposition}\label{NOT-Doubly-com-sub}
For an inner function $q$ on $\triangle_H,$ $q \HT$ is never doubly commuting submodule.
\end{proposition}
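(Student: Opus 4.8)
The plan is to pull the problem back to $\HD$ via the unitary $\Psi$ and exploit the structure \eqref{Diagram}, where $M_1 M_2 = \Psi M_z \Psi^{-1}$ and $M_2 = \Psi M_w \Psi^{-1}$. If $q$ is inner on $\TRH$, then by Remark \ref{hd-inner-ht} the function $\tilde q := q \circ \varphi^{-1}$ is inner on $\mathbb D^2$, and since $\Psi(f) = J_{\varphi^{-1}} \cdot f \circ \varphi^{-1}$ with $J_{\varphi^{-1}}(z,w) = w$, one checks that $\Psi(q\HT)$ is exactly the submodule $\mathfrak S := \tilde q\, \Psi(\HT)$ of $\HD$ — but more importantly, $\Psi$ carries the pair $(S_z, S_w)$ on $q\HT$ to the pair obtained by compressing $(M_1 M_2, M_2)$ to $\Psi(q\HT)$. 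So $q\HT$ is doubly commuting (i.e. $[S_z^*, S_w] = 0$) if and only if the restrictions $A := (M_1 M_2)|_{\Psi(q\HT)}$ and $B := M_2|_{\Psi(q\HT)}$ satisfy $[A^*, B] = 0$ on $\Psi(q\HT)$.

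First I would pin down $\Psi(\HT)$ and $\Psi(q\HT)$ concretely. Using the description of $H^2(\partial_d \triangle_H)$ with index set $\mathcal I = \{(\alpha_1,\alpha_2) : \alpha_1 \ge 0,\ \alpha_1 + \alpha_2 + 1 \ge 0\}$ and $\Psi(\zeta_1^{\alpha_1}\zeta_2^{\alpha_2}) = \zeta_1^{\alpha_1} \zeta_2^{\alpha_1 + \alpha_2 + 1}$ (since multiplying by $J_{\varphi^{-1}} = w$ and composing with $\varphi^{-1}(z,w) = (zw, w)$ sends $z^{\alpha_1} w^{\alpha_2}$ to $w \cdot (zw)^{\alpha_1} w^{\alpha_2} = z^{\alpha_1} w^{\alpha_1 + \alpha_2 + 1}$), one sees $\Psi(\HT) = \HD$ — in fact this recovers the unitarity already recorded via $\Psi$ in \eqref{Inner Product Relation}. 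Hence $\Psi(q\HT) = \tilde q \cdot \Psi(\HT) = \tilde q\, \HD$, a Beurling-type submodule of $\HD$ with $\tilde q$ inner. So the question reduces to: for $\tilde q$ inner on $\mathbb D^2$, when is $\tilde q\HD$ doubly commuting for the \emph{modified} pair $(M_1 M_2, M_2)$ rather than the standard pair $(M_1, M_2)$? By Mandrekar's theorem, $\tilde q\HD$ is a doubly commuting submodule of $\HD$ in the usual sense, i.e. $[(M_1|_{\tilde q\HD})^*, M_2|_{\tilde q\HD}] = 0$; but we need the commutator involving $M_1 M_2$.

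The key computation is then the following. On $\tilde q\HD$, write $S_1 = M_1|_{\tilde q\HD}$, $S_2 = M_2|_{\tilde q\HD}$; these doubly commute and each is (unitarily) a shift, with $S_1^* S_1 = S_2^* S_2 = I$ and $S_1 S_2 = S_2 S_1$, $S_1^* S_2 = S_2 S_1^*$. The operators transported from $q\HT$ are $A = S_1 S_2$ and $B = S_2$. Compute $[A^*, B] = A^* B - B A^* = S_2^* S_1^* S_2 - S_2 S_2^* S_1^* = S_2^* S_2 S_1^* - S_2 S_2^* S_1^* = (I - S_2 S_2^*) S_1^*$, using $S_1^* S_2 = S_2 S_1^*$ and $S_2^* S_2 = I$. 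Now $I - S_2 S_2^*$ is the projection onto $\tilde q\HD \ominus S_2(\tilde q\HD) = \tilde q\HD \ominus w\tilde q\HD$, which is nonzero (it contains $\tilde q$ itself, as $\tilde q \notin w\tilde q\HD$ since $\tilde q/w \notin \HD$). Thus $[A^*, B] = (I - S_2 S_2^*) S_1^* \ne 0$: indeed applying it to $w\tilde q$ (which lies in $\tilde q\HD$) gives $(I - S_2S_2^*)(\tilde q) = \tilde q \ne 0$. Therefore $[S_z^*, S_w] \ne 0$ on $q\HT$, so $q\HT$ is never doubly commuting.

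The main obstacle is bookkeeping rather than depth: verifying carefully that $\Psi$ intertwines $(S_z, S_w)$ on $q\HT$ with $(S_1 S_2, S_2)$ on $\tilde q\HD$ (this is just \eqref{Diagram} restricted to the submodule, but one must confirm $\Psi(q\HT) = \tilde q\HD$ is invariant, which follows since $q\HT$ is a submodule of $\HT$ and $\Psi$ is the module map up to the twist $M_1M_2 \leftrightarrow M_z$), and confirming the nonvanishing of $(I - S_2 S_2^*)S_1^*$ — equivalently that $w\tilde q\HD \subsetneq \tilde q\HD$, which is immediate. One should also note this does not contradict Theorem \ref{Doubly-com-sub}: there the doubly commuting submodules are $q\HTO$ with $q$ \emph{unimodular} on $\TRH$ (so $q\HD$ rather than $q\HT$), and for those the transported pair behaves differently because $\HTO$ sits inside $\HT$ via the identity map $R$, not via $\Psi$.
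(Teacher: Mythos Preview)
Your transference strategy via $\Psi$ is sound and the key identity $[A^*,B]=(I-S_2S_2^*)S_1^*$ on $\tilde q\,\HD$ is correct, but your test vector is wrong: $S_1=M_1|_{\tilde q\HD}$ is multiplication by the \emph{first} variable $z$, so (after conjugating by the isometry $M_{\tilde q}$) $S_1^*$ acts as the backward shift in $z$ on the basis $\{\tilde q\,z^iw^j\}$. Hence $S_1^*(w\tilde q)=S_1^*(\tilde q\cdot z^0w^1)=0$, not $\tilde q$. The fix is trivial---apply your operator to $z\tilde q$ instead: then $S_1^*(z\tilde q)=\tilde q$ and $(I-S_2S_2^*)\tilde q=\tilde q$ since $S_2^*\tilde q=0$, giving the desired nonvanishing. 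With this correction the argument is complete.

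The paper's proof is more direct: it stays in $\HT$, uses that $M_q$ is an isometry to get the orthonormal basis $\{q\frac{1}{w}(\frac{z}{w})^iw^j\}$ of $q\HT$, computes $S_w^*$ on that basis by hand, and checks $[S_w^*,S_z]$ on the vectors $q\frac{1}{w}(\frac{z}{w})^i$. Under $\Psi$ these vectors are exactly $\tilde q\,z^i$, so the two computations are the same in different coordinates; the difference is that you invoke Mandrekar's theorem to know $S_1^*S_2=S_2S_1^*$ (which collapses the commutator into the clean form $(I-S_2S_2^*)S_1^*$), whereas the paper just computes both sides explicitly without naming that structure. Your route is a bit more conceptual and explains \emph{why} the obstruction is the wandering-subspace projection $I-S_2S_2^*$; the paper's route is shorter and avoids the bookkeeping of verifying $\Psi(q\HT)=\tilde q\,\HD$ and the intertwining on submodules.
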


\begin{proof}
Let $q$ be an inner function on $\TRH.$ Then $\mathcal{S}=q\HT$ is a submodule of $\HT.$ Since $q$ is inner, $M_q$ is an isometry on $\HT$ (see \cite[Corollary 4.4]{JP2024}) and hence $\left\{q\frac{1}{w}\left(\frac{z}{w}\right)^iw^j : i, j \ge 0\right\}$ is an orthonormal basis of $\mathcal S.$ Now for, any $g \in \HT,$
\begin{align*}
\left \langle S_w^*q\frac{1}{w}\left(\frac{z}{w}\right)^iw^{j}, qg \right \rangle &=\left \langle P_{\mathcal{S}}M_w^*q\frac{1}{w}\left(\frac{z}{w}\right)^iw^{j}, qg \right \rangle \\&=\left\langle q\frac{1}{w}\left(\frac{z}{w}\right)^iw^{j}, qwg \right \rangle=\left\langle \frac{1}{w}\left(\frac{z}{w}\right)^iw^{j}, wg \right \rangle.
\end{align*}
This yields that for all $i\ge0$, and $j\ge 1$, $S_w^*q\frac{1}{w}\left(\frac{z}{w}\right)^i =0$, and  $$S_w^*q\frac{1}{w}\left(\frac{z}{w}\right)^iw^j =q\frac{1}{w}\left(\frac{z}{w}\right)^iw^{j-1}.$$ 
Thus
$$S_w^*S_zq\frac{1}{w}\left(\frac{z}{w}\right)^i=S_w^*q\frac{1}{w}\left(\frac{z}{w}\right)^{i+1}w=q\frac{1}{w}\left(\frac{z}{w}\right)^{i+1},$$
whereas $S_zS_w^*q\frac{1}{w}\left(\frac{z}{w}\right)^i=0.$ Therefore, $\mathcal{S}$ is not doubly commuting.
\end{proof}

In the sequel, we say that a function $\phi:\TRH\to\mathbb{C}$ is unimodular if $|\phi|=1$ a.e. on $\partial_{d}(\TRH)=\mathbb{T}^{2}$. It is important to note that not every unimodular Hardy function on $\TRH$ is inner. For instance, the function $q \in \HT$ defined by $q(z,w) = \frac{1}{w}$ is unimodular but fails to be inner.

\begin{remark}\label{uni-inner}
For a unimodular $\phi\in\HT,$ it is easy to see that the function $\Psi(\phi)$ is inner on $\mathbb{D}^{2}$ (see \eqref{Psi-map} for $\Psi$).\eof
\end{remark}

It is easy to see that $\HTO$ is a doubly commuting submodule of $\HT$ (see \eqref{hdinht}). Theorem \ref{Doubly-com-sub} characterizes all doubly commuting submodules in terms of $\HTO.$ The proof of Theorem \ref{Doubly-com-sub} follows the approach outlined in \cite{M1988}. Before proceeding, we note the following elementary lemma.
\begin{lemma}\label{Mq-isometry}
For a unimodular function $q:\TRH\to\mathbb{C}$, the operator $M_{q}:\HTO\to\HT$ is an isometry.
\end{lemma}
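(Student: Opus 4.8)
The plan is to compute directly on an orthonormal basis of $\HTO$. Recall from \eqref{hdinht} that $\HTO$ consists of the functions in $\HT$ with holomorphic expansion $\sum_{i,j\ge 0}a_{i,j}z^iw^j$, so that $\{z^iw^j : i,j\ge 0\}$ (suitably normalized in the $\HT$-inner product) spans $\HTO$. Alternatively, and more usefully, I would pass through the unitary $R:\HD\to\HTO$ of \eqref{hdinht} given by restriction, under which the monomial basis $\{z^iw^j\}_{i,j\ge 0}$ of $\HD$ maps to the corresponding monomials in $\HTO$; thus $\{z^iw^j\}_{i,j\ge 0}$ is an orthonormal basis of $\HTO$ with respect to the $\HT$-norm (after checking the normalization is the same, which follows from \eqref{Inner Product Relation} and \eqref{Psi-map} since $\Psi$ is unitary). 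The claim is then that $\langle q\,z^iw^j,\, q\,z^kw^l\rangle_{\HT}=\langle z^iw^j,\, z^kw^l\rangle_{\HT}$ for all $i,j,k,l\ge 0$.

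For this I would invoke the inner-product relation \eqref{Inner Product Relation}: for $f,g\in\HT$ one has $\langle f,g\rangle_{\HT}=\langle\Psi(f),\Psi(g)\rangle_{\HD}$, where $\Psi(f)=J_{\varphi^{-1}}\cdot f\circ\varphi^{-1}$. Applying this with $f=q\,z^iw^j$ and $g=q\,z^kw^l$ gives
\[
\langle q z^iw^j, q z^kw^l\rangle_{\HT}=\langle \Psi(q)\cdot\Psi(z^iw^j),\ \Psi(q)\cdot\Psi(z^kw^l)\rangle_{\HD},
\]
using the pointwise multiplicativity $\Psi(q\cdot h)=J_{\varphi^{-1}}\cdot(q\cdot h)\circ\varphi^{-1}=\big(q\circ\varphi^{-1}\big)\cdot\big(J_{\varphi^{-1}}\cdot h\circ\varphi^{-1}\big)=(q\circ\varphi^{-1})\cdot\Psi(h)$, together with the fact that $q\circ\varphi^{-1}$ is bounded so the products lie in $\HD$. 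By Remark \ref{uni-inner}, since $q$ is unimodular on $\TRH$ the function $\Psi(q)$ is inner on $\mathbb{D}^2$; hence multiplication by $\Psi(q)$ is an isometry on $\HD$, and the right-hand side equals $\langle\Psi(z^iw^j),\Psi(z^kw^l)\rangle_{\HD}=\langle z^iw^j, z^kw^l\rangle_{\HT}$ by \eqref{Inner Product Relation} again. This proves the isometry on basis elements; extending to all of $\HTO$ by linearity and density (and checking that $M_q$ indeed maps $\HTO$ into $\HT$, which again follows from $\Psi(q)\Psi(h)\in\HD\subseteq$ range of $\Psi$) completes the argument.

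The only mild subtlety — the step I would be most careful about — is the identity $\Psi(q\cdot h)=(q\circ\varphi^{-1})\cdot\Psi(h)$ and the claim that $q\cdot h\in\HT$ in the first place: one must know that multiplication by a unimodular $\HT$-function sends $\HTO$ into $\HT$. This is transparent after transporting via $\Psi$, because $\Psi(q)$ is a genuine bounded (indeed inner) function on $\mathbb{D}^2$ by Remark \ref{uni-inner}, so $\Psi(q)\Psi(h)\in\HD$ whenever $h\in\HTO$; pulling back through $\Psi^{-1}$ (which maps $\HD$ back into $\HT$, in fact onto $\HTO$) gives $q h\in\HTO\subseteq\HT$. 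Everything else is a routine consequence of the unitarity of $\Psi$ and $R$ and the fact that inner functions on $\mathbb{D}^2$ induce isometric multiplication operators on $\HD$.
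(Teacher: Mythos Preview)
Your approach matches the paper's: push everything to $\HD$ via $\Psi$ and use Remark~\ref{uni-inner}. But there is a slip in your factorization of $\Psi(qh)$. You correctly compute $\Psi(qh)=(q\circ\varphi^{-1})\cdot\Psi(h)$, yet your displayed equation reads $\Psi(q)\cdot\Psi(h)$; these differ by a factor of $w$, since $\Psi(q)=w\cdot(q\circ\varphi^{-1})$. This matters because $q\circ\varphi^{-1}$ need not be bounded on $\mathbb{D}^2$ (take $q(z,w)=1/w$, the paper's own example of a unimodular non-inner function in $\HT$), so your parenthetical ``$q\circ\varphi^{-1}$ is bounded'' is unjustified, and you cannot simply appeal to innerness of $q\circ\varphi^{-1}$ to strip it off.

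The paper repairs this by factoring the other way: $\Psi(qf)=\Psi(q)\cdot(f\circ\varphi^{-1})$, which is legitimate because for $f\in\HTO$ one has $f\circ\varphi^{-1}\in\HD$. Now $\Psi(q)$ \emph{is} inner by Remark~\ref{uni-inner}, so it strips off isometrically, leaving $\langle f\circ\varphi^{-1},\,g\circ\varphi^{-1}\rangle_{\HD}$; one more application of the isometry $M_2$ on $\HD$ reinstates the missing factor of $w$ and yields $\langle\Psi(f),\Psi(g)\rangle_{\HD}=\langle f,g\rangle_{\HT}$. With this correction your argument goes through, and the detour through basis elements and density can be dropped: the computation works directly for arbitrary $f,g\in\HTO$.
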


\begin{proof}
Let $f,g\in\HTO$, then
\begin{align*}
\langle M_{q}f,M_{q}g\rangle_{\HT}&\overset{\eqref{Inner Product Relation}}{=}\langle \Psi(qf),\Psi(qg)\rangle_{\HD}\\
&=\langle \Psi(q)\cdot f\circ\varphi^{-1},\Psi(q)\cdot g\circ\varphi^{-1}\rangle_{\HD}\\
&=\langle f\circ\varphi^{-1},g\circ\varphi^{-1}\rangle_{\HD}\quad\mbox{(see Remark \ref{uni-inner})}\\
&=\langle M_{2}f\circ\varphi^{-1},M_{2}g\circ\varphi^{-1}\rangle_{\HD}\\
&=\langle \Psi(f),\Psi(g)\rangle_{\HD}\\
&=\langle f,g\rangle_{\HT}.
\end{align*}
This completes the proof.
\end{proof}
\begin{proof}[Proof of Theorem \ref{Doubly-com-sub}]
Let $\mathcal{S}=q\HTO$ for some unimodular function $q$ on $\triangle_H$. We claim that $S_z$ and $S_w$ are doubly commuting on $\mathcal{S}.$ Since $q$ is unimodular, by Lemma \ref{Mq-isometry}, $M_q$ is an isometry from $\HTO$ to $\HT.$ Hence $\{q z^i w^j:i,j\ge 0\}$ is an orthonormal basis of $\mathcal{S}.$ Now for any $g\in\HTO,$
$$\langle S_w^*qz^iw^j, qg \rangle=\langle M_w^*qz^iw^j, qg \rangle =\langle qz^iw^j, qwg \rangle =\langle z^iw^j, wg \rangle.$$
This implies that for all $i\ge0$,
$$S_{w}^*(qz^{i}w^{j})=\begin{cases}
0&\text{if}~j=0\\
qz^iw^{j-1}&\text{if}~j\ge1.
\end{cases}$$  
Now, for $i\ge0$ and $j\ge 1$, we have
$$S_w^*S_zqz^i=0=S_zS_w^*qz^i\quad\mbox{and}\quad S_w^*S_zqz^{i}w^{j}=qz^{i+1}w^{j-1}=S_zS_w^*qz^{i}w^{j}.$$
Therefore, $S_z$ and $S_w$ are doubly commuting on $\mathcal{S}$.
	
Let $S_{z}$ and $S_{w}$ be doubly commuting on the submodule $\mathcal{S}$. Then, in view of \cite[Theorem 1]{S1980}, we get
\begin{align}\label{Sub-Decomp}
\mathcal{S}=\bigoplus_{m,n=0}^{\infty} z^{m}w^{n}\left(\mathcal{W}_{1}\cap\mathcal{W}_{2}\right),
\end{align}
where $\mathcal{W}_{1}=\mathcal{S}\ominus z\mathcal{S}$ and $\mathcal{W}_{2}=\mathcal{S}\ominus w\mathcal{S}$. Again from \cite[Corollary 1 and Theorem 1]{S1980}, we further obtain $\mathcal{W}_{1}\cap\mathcal{W}_{2}\neq\{0\}$. We now claim that $\mathcal{W}_{1}\cap\mathcal{W}_{2}$ is one dimensional. Form the above argument, it suffices to show that the dimension of $\mathcal{W}_{1}\cap\mathcal{W}_{2}$ is at most one. Let $g_1,g_2\in\mathcal{W}_{1}\cap\mathcal{W}_{2}$, and define $\psi_1=g_1\circ \varphi^{-1}$ and $\psi_2=g_2\circ \varphi^{-1}$ (see \eqref{phi inverse} for $\varphi^{-1}$).

Since $S_z$ and $S_w$ are doubly commuting, by \cite[Lemma 6.2]{MaSa2019}, we have $S_{z}\mathcal{W}_{2}\subseteq\mathcal{W}_{2}$ and $S_{w}\mathcal{W}_{1}\subseteq\mathcal{W}_{1}$. This, together with \eqref{Inner Product Relation}, further implies
\begin{align*}
\langle \Psi(z^{m}g_{1}),\Psi(w^{n}g_{2})\rangle_{\HD}=0\quad\forall~m\ge0~\text{and}~n>0,
\end{align*}
which further reduces to
\begin{align}\label{DCS-1}
\int_{\mathbb{T}^{2}}z^mw^{m-n}\psi_1(z, w)\overline{\psi_2(z, w)}d\sigma(z, w)=0\quad\forall~m\ge0~\text{and}~n>0,
\end{align}
where $d\sigma(z, w)$ denotes the normalized Lebesgue measure on $\mathbb T^2.$
We also have 
\begin{align*}
\langle \Psi(w^{n}g_{1}),\Psi(z^{m}g_{2})\rangle_{\HD}=0\quad\forall~m\ge0~\text{and}~n>0,
\end{align*}
which is same as saying
\begin{align}\label{DCS-2}
\int_{\mathbb{T}^{2}}z^mw^{m+n}\psi_1(z, w)\overline{\psi_2(z, w)}d\sigma(z, w)=0\quad\forall~m\le0~\text{and}~n>0.
\end{align}
We also note that \eqref{DCS-1} and \eqref{DCS-2} hold for $m>0,n\ge0$ and $m<0,n\ge0$, respectively. Moreover, since $g_{2}\in\mathcal{W}_{2}$ and $z^{m}w^{n}g_{1}\in w\mathcal{S}$ for all $m\ge0$ and $n>0$, it follows that
\begin{align}\label{DCS-4}
\int_{\mathbb{T}^{2}}z^mw^{m+n}\psi_1(z, w)\overline{\psi_2(z, w)}d\sigma(z, w)=0\quad\forall~m\ge0~\text{and}~n>0.
\end{align}
Similary, since $g_{1}\in\mathcal{W}_{2}$ and $z^{m}w^{n}g_{2}\in w\mathcal{S}$ for all $m\ge0$ and $n>0$, we obtain
\begin{align}\label{DCS-5}
\int_{\mathbb{T}^{2}}z^mw^{m+n}\psi_1(z, w)\overline{\psi_2(z, w)}d\sigma(z, w)=0\quad\forall~m\le0~\text{and}~n<0.
\end{align}
Consequently, from \eqref{DCS-1}-\eqref{DCS-5}, we deduce that
\begin{align}\label{DCS-3}
\int_{\mathbb{T}^{2}}z^mw^{n}\psi_1(z, w)\overline{\psi_2(z, w)}d\sigma(z, w)=0\quad\forall~m,n\neq(0,0).
\end{align}
The later equation \eqref{DCS-3} implies that $\psi_1\overline{\psi_2}=c$ a.e. on $\mathbb{T}^2$. Now by arguing similarly to the proof of \cite[Theorem 2]{M1988}, we find that $\mathcal{W}_1\cap\mathcal{W}_2$ is one-dimensional. Let $q$ generate $\mathcal{W}_1\cap\mathcal{W}_2$, one can choose $q\circ\varphi^{-1}$ to be unimodular on $\mathbb T^2.$ Hence, $q\in\HT$ is also unimodular. Finally, \eqref{Sub-Decomp} gives the result. This completes the proof of the second implication.
\end{proof}
We now present an alternate proof of Theorem \ref{Doubly-com-sub}.
\begin{proof}[Alternate proof of Theorem \ref{Doubly-com-sub}]
Note that $S_z$ and $S_w$ are shifts on any submodule $\mathcal{S}$ of $\HT.$ Now the result follows from \cite[Corollary 4]{GM1988}.
\end{proof}

The following example shows that the unimodular function $q$ is not required to be inner in Theorem \ref{Doubly-com-sub}, in contrast to the classification of doubly commuting submodules of $\HD$, where $q$ must be inner (see \cite[Theorem 2]{M1988}).
\begin{example}
Consider the submodule $\mathcal{S}=\frac{1}{w}\HTO)$, whose orthonormal basis is given by $\{z^{i}w^{j}:i\ge0,j\ge-1\}$. A routine calculation shows that $[S_{z},S_{w}^*]=0$. Moreover, $\mathcal S$ is an example of a doubly commuting submodule of $\HT$ that properly contains $\HTO$.\eoex
\end{example}

In view of Theorem \ref{Doubly-com-sub}, we have the following.

\begin{corollary}\label{Rank}
The rank of a doubly commuting submodule of $\HT$ is $1.$
\end{corollary}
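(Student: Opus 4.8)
\textbf{Proof proposal for Corollary~\ref{Rank}.}

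The plan is to read the rank off directly from the structure theorem just proved. Recall that the rank of a submodule $\mathcal{S}$ is the least cardinality of a set $F\subseteq\mathcal{S}$ with $\overline{\mathbb{C}[z,w]\cdot F}=\mathcal{S}$. By Theorem~\ref{Doubly-com-sub}, a nontrivial doubly commuting submodule of $\HT$ has the form $\mathcal{S}=q\HTO$ for some unimodular function $q$ on $\TRH$. First I would record that $\HTO$ is cyclic: by the identification $R\colon\HD\to\HTO$ and the density of $\mathbb{C}[z,w]$ in $\HD$, we have $\overline{\mathbb{C}[z,w]\cdot 1}=\HTO$, so the constant function $1$ generates $\HTO$.

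Next, since $M_q\colon\HTO\to\HT$ is an isometry by Lemma~\ref{Mq-isometry}, it has closed range equal to $\mathcal{S}$ and, being continuous with closed range, carries dense subsets of $\HTO$ onto dense subsets of $\mathcal{S}$. Applying this to the dense set $\mathbb{C}[z,w]\subseteq\HTO$ gives
\[
\overline{\mathbb{C}[z,w]\cdot q}=\overline{M_q\big(\mathbb{C}[z,w]\big)}=M_q\big(\overline{\mathbb{C}[z,w]}\big)=M_q(\HTO)=q\HTO=\mathcal{S},
\]
so $q=M_q 1\in\mathcal{S}$ is a single generator and $\operatorname{rank}(\mathcal{S})\le 1$. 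Since $\mathcal{S}$ is nontrivial, $\operatorname{rank}(\mathcal{S})\ge 1$, and therefore $\operatorname{rank}(\mathcal{S})=1$.

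Alternatively, one can appeal to the decomposition \eqref{Sub-Decomp}: in the proof of Theorem~\ref{Doubly-com-sub} the wandering space $\mathcal{W}_1\cap\mathcal{W}_2$ was shown to be one–dimensional, and any vector spanning this line generates $\mathcal{S}$ under the module action, again yielding rank~$1$. The only point needing a line of justification is the density transfer—that an isometry $M_q$ sends the dense subspace $\mathbb{C}[z,w]$ of $\HTO$ to a dense subset of its range $\mathcal{S}$—but this is immediate from continuity together with the closed–range property of isometries, so no genuine obstacle is expected.
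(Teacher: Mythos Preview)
Your proof is correct and follows essentially the same approach as the paper: invoke Theorem~\ref{Doubly-com-sub} to write $\mathcal{S}=q\HTO$ and then observe that $q$ is a cyclic vector for $(S_z,S_w)$ because polynomials are dense in $\HTO$ and $M_q$ is an isometry. The paper's version is terser---it simply records $\mathcal{S}=\overline{\operatorname{span}}\{S_z^mS_w^nq:m,n\ge 0\}$---but the content is the same.
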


Let $T=(T_1, T_2)$ be a pair of commuting bounded linear operators on a Hilbert space $\mathcal{H}$. For a subset $E\subseteq\mathcal{H}$, we define $[E]_{T}$ as the closed subspace $$\overline{\text{span}}\left\{T_1^{k_{1}}T_2^{k_{2}}E:k_1,k_2\in\mathbb{Z}_+\right\}$$ of $\mathcal{H}$. The rank of $T$ is defined as the unique number
$$\text{rank}(T)=\text{min}\left\{\#E:[E]_{T}=\mathcal{H},E\subseteq\mathcal{H}\right\}.$$
We denote by $\#E$ the number of elements in $E$. Let $\mathcal{S}$ be a submodule of $\HT$. Then, the rank of $\mathcal{S}$ \cite{DP1989,III2011} is defined by
\begin{align*}
\mbox{rank}(\mathcal{S})=\mbox{rank}\left(S_{z},S_{w}\right).
\end{align*}
Note that Corollary \ref{Rank} concerns the cyclic submodules of $\HT$. In contrast, \cite[Proposition 5.1]{CJP2023} shows that the pair $(M_z, M_w)$ is not finitely cyclic on $\HT.$

\begin{proof}[Proof of Corollary \ref{Rank}]
Let $\mathcal S$ be a doubly commuting submodule of $\HT.$ Then by Theorem \ref{Doubly-com-sub}, $\mathcal S= q \HTO$ for some unimodular function $q$ on $\TRH.$ Since 
\beqn \mathcal{S}=\overline {\operatorname{span}}\{S_z^mS_w^nq : m,n  \ge 0\},\eeqn 
rank of $\mathcal S$ is $1.$
\end{proof}

Note that, $\HD\subset\HT$ (see \eqref{hdinht}), and every submodule of $\HD$ is also a submodule of $\HT$. This leads us to a natural question: Let $\mathcal{S}$ be a submodule of $\HT$. Is it necessarily true that $\Psi(\mathcal{S})$ (see \eqref{Psi-map} for the definition of $\Psi$) is a submodule of $\HD$?

In general, the answer is negative, as shown by the following example.

\begin{example}
Consider the submodule $$\mathcal{S}=\overline{\mathrm{span}}\Big\{\frac{1}{w}\left(\frac{z}{w}\right)^iw^j :0 \le  i \le j\Big\}\subset\HT.$$
It is easy to see that $\Psi(\mathcal{S})$ is not invariant under $M_1$.\eoex
\end{example}

The following result characterizes when the image of a submodule of $\HT$ under the map $\Psi$ is itself a submodule of $\HD$.

\begin{proposition} \label{image of a submodule}
Let $\mathcal S$ be a submodule of $\HT.$ Then $\Psi(\mathcal S)$ is a submodule of $\HD$ if and only if $\mathcal S$ is invariant under $M_{\varphi_1},$ where $\varphi_1$ is given as in \eqref{phi-map}.
\end{proposition}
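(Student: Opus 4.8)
The plan is to reduce both implications to a single conjugation identity: $\Psi M_{\varphi_1}\Psi^{-1}=M_1$ on $\HD$. Once this is available, everything follows formally from the unitarity of $\Psi$ together with the observation that $\mathcal S$, being a submodule of $\HT$, is automatically invariant under $M_w$ (so that $\Psi(\mathcal S)$ is automatically invariant under $M_2$).

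First I would check that $M_{\varphi_1}$ is a well-defined bounded operator on $\HT$ so that the statement makes sense: since $\varphi_1\circ\varphi^{-1}(z,w)=z\in H^\infty(\mathbb D^2)$ is inner, Remark \ref{hd-inner-ht} shows $\varphi_1$ is inner on $\TRH$, hence (by \cite[Corollary 4.4]{JP2024}) $M_{\varphi_1}$ is an isometry on $\HT$ and in particular maps $\mathcal S$ into $\HT$. Next comes the key computation. Using \eqref{Psi-map}, \eqref{phi inverse} and $\Psi^{-1}(g)=J_{\varphi}\cdot g\circ\varphi$, for $g\in\HD$ one has
$$\Psi M_{\varphi_1}\Psi^{-1}(g)=J_{\varphi^{-1}}\cdot\big((\varphi_1\cdot J_{\varphi}\cdot g\circ\varphi)\circ\varphi^{-1}\big)=J_{\varphi^{-1}}\cdot(J_{\varphi}\circ\varphi^{-1})\cdot(\varphi_1\circ\varphi^{-1})\cdot g.$$
Since $J_{\varphi^{-1}}(z,w)=w$, $(J_{\varphi}\circ\varphi^{-1})(z,w)=1/w$ and $(\varphi_1\circ\varphi^{-1})(z,w)=z$, the two Jacobian factors cancel and the right-hand side is $z\,g=M_1 g$. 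Thus $\Psi M_{\varphi_1}\Psi^{-1}=M_1$. Note that this is consistent with \eqref{Diagram} via the factorization $M_z=M_{\varphi_1}M_w$ on $\HT$, but since $M_2$ has no bounded inverse on $\HD$ one cannot simply cancel $M_2$ in \eqref{Diagram}, so the explicit change-of-variables computation is the clean route.

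With the identities $\Psi M_{\varphi_1}=M_1\Psi$ and (from \eqref{Diagram}) $\Psi M_w=M_2\Psi$ in hand, the two directions are immediate. If $M_{\varphi_1}\mathcal S\subseteq\mathcal S$, then using also $M_w\mathcal S\subseteq\mathcal S$ and applying $\Psi$ gives $M_1\Psi(\mathcal S)=\Psi(M_{\varphi_1}\mathcal S)\subseteq\Psi(\mathcal S)$ and $M_2\Psi(\mathcal S)=\Psi(M_w\mathcal S)\subseteq\Psi(\mathcal S)$; since $\Psi$ is unitary, $\Psi(\mathcal S)$ is a closed subspace, hence a submodule of $\HD$. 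Conversely, if $\Psi(\mathcal S)$ is a submodule of $\HD$, then $\Psi(M_{\varphi_1}\mathcal S)=M_1\Psi(\mathcal S)\subseteq\Psi(\mathcal S)$, and applying the unitary $\Psi^{-1}$ yields $M_{\varphi_1}\mathcal S\subseteq\mathcal S$.

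I do not expect a genuine obstacle here: the entire content is the identification of the $\Psi$-conjugate of $M_{\varphi_1}$ with $M_1$. The only point requiring a little care is precisely that this identification is \emph{not} a formal consequence of \eqref{Diagram} by cancelling $M_2$ (which is not invertible), so one should carry out the explicit computation above rather than quote \eqref{Diagram}.
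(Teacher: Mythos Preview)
Your proposal is correct and follows the same approach as the paper: reduce everything to the conjugation identity $\Psi M_{\varphi_1}\Psi^{-1}=M_1$ together with $\Psi M_w\Psi^{-1}=M_2$, then read off both implications. The only difference is level of detail; the paper's proof asserts in one line that ``$\Psi(\mathcal S)$ is invariant under $M_1$ if and only if $\mathcal S$ is invariant under $M_{\varphi_1}$'', whereas you carry out the Jacobian computation explicitly and note (correctly) that this is not a formal consequence of \eqref{Diagram} since $M_2$ is not invertible.
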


\begin{proof}
It is easy to see that $\mathcal S$ is invariant under $M_w$ if and only if $\Psi(\mathcal S)$ is invariant under $M_2.$ Now, $\Psi(\mathcal S)$ is a submodule of $\HD$ if and only if $\Psi(\mathcal S)$ is invariant under $M_1$, which holds if and only if $\mathcal{S}$ is invariant under $M_{\varphi_1}$.
\end{proof}

\begin{proposition}\label{Psi-inverse-submodule}
For a submodule $\mathfrak{S}$ of $\HD$, $\Psi^{-1}(\mathfrak{S})$ is always a submodule of $\HT$.
\end{proposition}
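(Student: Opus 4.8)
The plan is to deduce everything from the intertwining relations \eqref{Diagram}, since they express $M_z$ and $M_w$ on $\HT$ entirely in terms of the shifts $M_1,M_2$ on $\HD$ conjugated by the unitary $\Psi$. First note that, because $\Psi^{-1}:H^2(\mathbb T^2)\to H^2(\partial_d\triangle_H)$ is unitary (see \eqref{Psi-map}--\eqref{Inner Product Relation}) and $H^2(\partial_d\triangle_H)$ is identified with $\HT$, the set $\mathcal S:=\Psi^{-1}(\mathfrak S)$ is automatically a closed subspace of $\HT$. Hence the entire content of the proposition is the invariance statement: $M_z\mathcal S\subseteq\mathcal S$ and $M_w\mathcal S\subseteq\mathcal S$.

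To carry this out I would rewrite \eqref{Diagram} as $M_z=\Psi^{-1}M_1M_2\Psi$ and $M_w=\Psi^{-1}M_2\Psi$. Now fix $f\in\mathcal S$, so that $\Psi(f)\in\mathfrak S$. Since $\mathfrak S$ is a submodule of $\HD$, it is invariant under $M_2$, giving $M_2\Psi(f)\in\mathfrak S$; applying $M_1$ and using invariance under $M_1$ yields $M_1M_2\Psi(f)\in\mathfrak S$. Therefore $\Psi(M_zf)=M_1M_2\Psi(f)\in\mathfrak S$, i.e. $M_zf\in\Psi^{-1}(\mathfrak S)=\mathcal S$. The same computation with the other relation gives $\Psi(M_wf)=M_2\Psi(f)\in\mathfrak S$, hence $M_wf\in\mathcal S$. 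Thus $\mathcal S$ is invariant under both module multiplication operators, which is exactly the assertion that $\Psi^{-1}(\mathfrak S)$ is a submodule of $\HT$.

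There is essentially no obstacle here; the only point that needs a little care is keeping the identifications straight — $\HD\cong H^2(\mathbb T^2)$ and $\HT\cong H^2(\partial_d\triangle_H)$ under which \eqref{Diagram} is stated — and observing that $\Psi^{-1}$ genuinely maps into $\HT$. One could also phrase the argument purely at the operator level: $\mathfrak S$ is invariant under $M_1$ and under $M_2$, so $\Psi^{-1}(\mathfrak S)$ is invariant under $\Psi^{-1}M_1\Psi$ and under $\Psi^{-1}M_2\Psi$, hence under their product $\Psi^{-1}M_1M_2\Psi=M_z$ and under $\Psi^{-1}M_2\Psi=M_w$. (It is worth contrasting this with Proposition \ref{image of a submodule}: the forward direction fails in general precisely because $M_{\varphi_1}=\Psi^{-1}M_1\Psi$ need not preserve an arbitrary submodule of $\HT$, whereas here the single shift $M_2$ and the product $M_1M_2$ suffice.)
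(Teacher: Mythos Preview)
Your proof is correct and is essentially the same as the paper's: the paper verifies directly from the formula $\Psi^{-1}(f)=\frac{1}{w}\,f\circ\varphi$ that $z\,\Psi^{-1}(f)=\Psi^{-1}(zwf)$ and $w\,\Psi^{-1}(f)=\Psi^{-1}(wf)$, which is precisely the intertwining relation \eqref{Diagram} that you invoke. The only difference is cosmetic---you cite \eqref{Diagram} rather than recomputing it.
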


\begin{proof}
Take $f\in\mathfrak{S}$. Then
$$z\Psi^{-1}(f)=z\cdot\frac{1}{w}\cdot f\circ \varphi=\Psi^{-1}(zwf)\in\Psi^{-1}(\mathfrak{S}),$$
and similarly,
$$w\Psi^{-1}(f)=\Psi^{-1}(wf)\in\Psi^{-1}(\mathfrak{S}).$$
This shows that $\Psi^{-1}(\mathfrak{S})$ is a submodule of $\HT$.
\end{proof}

\begin{corollary}
Let $\mathcal S$ be a doubly commuting submodule of $\HT$ that is invariant under $M_{\varphi_1}.$ Then $\Psi (\mathcal S)$ is a submodule of $\HD$ that is never doubly commuting.
\end{corollary}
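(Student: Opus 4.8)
The plan is to reduce the statement to Mandrekar's classification of the doubly commuting submodules of $\HD$, via Theorem~\ref{Doubly-com-sub}, Proposition~\ref{image of a submodule}, and an explicit description of $\Psi(\mathcal{S})$. Since $\mathcal{S}$ is a nontrivial doubly commuting submodule of $\HT$, Theorem~\ref{Doubly-com-sub} provides a unimodular function $q$ on $\TRH$ with $\mathcal{S}=q\,\HTO$; by Lemma~\ref{Mq-isometry} the operator $M_{q}$ is an isometry, so $\{q\,z^{i}w^{j}:i,j\ge 0\}$ is an orthonormal basis of $\mathcal{S}$. The assumed $M_{\varphi_1}$-invariance of $\mathcal{S}$ together with Proposition~\ref{image of a submodule} gives immediately that $\Psi(\mathcal{S})$ is a submodule of $\HD$; it remains to show that this submodule is not doubly commuting.

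To do so I would first compute $\Psi(\mathcal{S})$ explicitly. Using \eqref{Diagram} in the form $\Psi M_{z}=M_{1}M_{2}\Psi$ and $\Psi M_{w}=M_{2}\Psi$, one obtains
\[
\Psi(q\,z^{i}w^{j})=M_{1}^{i}M_{2}^{i+j}\,\Psi(q)=\Psi(q)\,z^{i}w^{i+j}\qquad(i,j\ge 0).
\]
Since $q$ is unimodular, $\Psi(q)$ is inner on $\mathbb{D}^{2}$ by Remark~\ref{uni-inner}, so $M_{\Psi(q)}$ is an isometry on $\HD$, and therefore
\[
\Psi(\mathcal{S})=M_{\Psi(q)}\mathcal{K},\qquad
\mathcal{K}:=\overline{\operatorname{span}}\{z^{i}w^{k}:0\le i\le k\}.
\]
Suppose, towards a contradiction, that $\Psi(\mathcal{S})$ is doubly commuting. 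Then Mandrekar's theorem \cite[Theorem~2]{M1988} gives $\Psi(\mathcal{S})=\eta\,\HD$ for some inner $\eta$ on $\mathbb{D}^{2}$. Since $\mathcal{K}\subseteq\HD$ we get $\eta\,\HD=M_{\Psi(q)}\mathcal{K}\subseteq\Psi(q)\,\HD$, so $\eta=\Psi(q)\,\psi$ for some inner $\psi$ on $\mathbb{D}^{2}$ (write $\eta=\Psi(q)\,g$ with $g\in\HD$; then $|g|=|\eta|=1$ a.e.\ on $\mathbb{T}^{2}$, i.e.\ $g$ is inner). Consequently $\eta\,\HD=M_{\Psi(q)}(\psi\,\HD)=M_{\Psi(q)}\mathcal{K}$, and applying the isometry $M_{\Psi(q)}^{*}$ yields $\psi\,\HD=\mathcal{K}$. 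In particular $\mathcal{K}$ would be invariant under $M_{1}$, which is false: $1\in\mathcal{K}$ but $M_{1}1=z\notin\mathcal{K}$, since the index pair $(1,0)$ violates $0\le i\le k$. This contradiction shows that $\Psi(\mathcal{S})$ is not doubly commuting, which is the assertion.

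I do not expect a real obstacle: the only steps deserving care are the index bookkeeping leading to $\Psi(q\,z^{i}w^{j})=\Psi(q)\,z^{i}w^{i+j}$ (the shift $j\mapsto i+j$ is exactly the effect of $M_{z}\leftrightarrow M_{1}M_{2}$) and the routine divisibility fact for Beurling-type submodules of $\HD$. One may also observe, from $M_{1}\mathcal{K}\not\subseteq\mathcal{K}$ and the fact that $M_{\Psi(q)}$ is an injective operator commuting with $M_{1}$, that $\Psi(\mathcal{S})=M_{\Psi(q)}\mathcal{K}$ can never be invariant under $M_{1}$; in view of Proposition~\ref{image of a submodule} this shows in particular that no nontrivial doubly commuting submodule of $\HT$ is invariant under $M_{\varphi_1}$, so the hypothesis of the corollary is in fact highly restrictive — the argument above nonetheless yields the stated conclusion unconditionally.
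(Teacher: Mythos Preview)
Your proof is correct and follows the paper's overall strategy---write $\mathcal{S}=q\HTO$ via Theorem~\ref{Doubly-com-sub}, invoke Proposition~\ref{image of a submodule}, assume $\Psi(\mathcal{S})$ is doubly commuting, and apply Mandrekar's theorem---but the final contradiction is obtained differently. The paper pulls the equality $\Psi(\mathcal{S})=\tilde q\,\HD$ back through $\Psi^{-1}$ to get $\mathcal{S}=(\tilde q\circ\varphi)\HT$ with $\tilde q\circ\varphi$ inner on $\TRH$, and then invokes Proposition~\ref{NOT-Doubly-com-sub} to contradict the doubly commuting hypothesis on $\mathcal{S}$. You instead stay on the $\HD$ side, compute $\Psi(\mathcal{S})=\Psi(q)\,\mathcal{K}$ explicitly, and reach a contradiction by showing $\mathcal{K}$ would have to equal a Beurling submodule $\psi\HD$ and hence be $M_1$-invariant, which it is not. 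The paper's route is a touch shorter because it recycles Proposition~\ref{NOT-Doubly-com-sub}; yours is more self-contained and, as you correctly point out in your closing remark, actually shows more: $\Psi(\mathcal{S})=\Psi(q)\,\mathcal{K}$ is \emph{never} $M_1$-invariant, so no nontrivial doubly commuting submodule of $\HT$ is $M_{\varphi_1}$-invariant, and the corollary is vacuously true. This extra observation is not made in the paper.
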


\begin{proof}
Let $\mathcal S$ be a doubly commuting submodule of $\HT.$ By Theorem \ref{Doubly-com-sub}, $\mathcal{S}=q\HTO$ for some unimodular function $q$ on $\TRH$. Note that $$\Psi(\mathcal{S})=\left\{w(q\circ \varphi^{-1})\cdot f\circ\varphi^{-1} : f \in \HTO\right\}.$$
Since $\mathcal S$ is invariant under $M_{\varphi_1},$ in view of Proposition \ref{image of a submodule}, $\Psi (\mathcal S)$ is a submodule of $\HD.$ Now, assume, if possible, that $\Psi (\mathcal S)$ is a doubly commuting submodule of $\HD$. Then, by \cite[Theorem 2]{M1988}, we have
$$\Psi (\mathcal S)= \widetilde{q} \HD$$
for some inner function $\widetilde{q}$ on $\mathbb D^2.$ Therefore, we obtain $\mathcal S=\widetilde{q}\circ\varphi \HT,$ where $\widetilde{q}\circ\varphi$ is inner on $\TRH$. This leads to a contradiction (see Proposition \ref{NOT-Doubly-com-sub}).
\end{proof}

The rest of the section is devoted to exploring various properties of submodules of $\HT$. 

In what follows, we show that the multiplication operators $M_z$ and $M_w$ on $\HT$ cannot have a common reducing subspace.

\begin{proposition} \label{noreducingsubspace}
$\HT$ can not be decomposed as a direct sum of two proper submodules.
\end{proposition}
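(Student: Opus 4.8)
The plan is to argue by contradiction, exploiting the structure of $\HT$ as given by the decomposition $\HT = \Psi^{-1}(\HD)$ together with the monomial basis $\left\{\frac{1}{w}\left(\frac{z}{w}\right)^i w^j : i \ge 0,\ i+j+1 \ge 0\right\}$ of $\HT$ (equivalently, the orthonormal basis $\{\zeta^\alpha : \alpha \in \mathcal{I}\}$ of $H^2(\partial_d\triangle_H)$). Suppose $\HT = \mathcal{S}_1 \oplus \mathcal{S}_2$ with $\mathcal{S}_1, \mathcal{S}_2$ proper submodules. Then $\mathcal{S}_1$ is a common reducing subspace for the pair $(M_z, M_w)$, so $P_{\mathcal{S}_1}$ commutes with both $M_z$ and $M_w$, hence with $M_w^*$ and with $M_z M_w^{-1}$-type operations where they make sense; the cleanest route is to transfer the question to $\HD$.

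First I would observe that via $\Psi$ and the relations \eqref{Diagram}, namely $M_1 M_2 = \Psi M_z \Psi^{-1}$ and $M_2 = \Psi M_w \Psi^{-1}$, a common reducing subspace $\mathcal{S}_1$ of $(M_z, M_w)$ on $\HT$ transfers to a closed subspace $\mathfrak{R} = \Psi(\mathcal{S}_1)$ of $\HD$ that reduces $M_2$ and also reduces $M_1 M_2$. Since $\mathfrak{R}$ reduces $M_2$ and $M_2$ is the shift in the second variable on $\HD = H^2(\mathbb{T}^2)$, and since $\mathfrak{R}$ simultaneously reduces $M_1 M_2$, I would show $\mathfrak{R}$ reduces $M_1$ as well: on the range of $M_2$ (which is all of $\HD$ up to the orthocomplement of the second-variable constants — more precisely $M_2$ is injective with closed range), one recovers $M_1$ from $M_1 M_2$ and $M_2^*$, since $M_1 = (M_1 M_2) M_2^*$ holds on $\operatorname{ran}(M_2)$ and $M_1$ is determined on all of $\HD$ by its action together with commutation. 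The point is that $P_{\mathfrak{R}}$ commutes with $M_2, M_2^*$ and with $M_1 M_2$, hence $P_{\mathfrak{R}}$ commutes with $M_1 = (M_1M_2)M_2^* + (\text{correction on } \ker M_2^*)$; one checks the correction term also commutes because $\ker M_2^* = H^2(\mathbb{D}) \otimes \mathbb{C}$ is reduced by $P_{\mathfrak{R}}$ (as $P_{\mathfrak{R}}$ commutes with $M_2 M_2^*$). Thus $\mathfrak{R}$ is a common reducing subspace of $(M_1, M_2)$ on $\HD = H^2(\mathbb{D}) \otimes H^2(\mathbb{D})$.

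Then I would invoke the classical fact that the bilateral-type shift pair $(M_1, M_2)$ on $H^2(\mathbb{D}^2)$ — being a doubly commuting pair of pure isometries with one-dimensional joint wandering space $\mathbb{C}\cdot 1$ — has no nontrivial common reducing subspace; equivalently, $H^2(\mathbb{D}^2)$ is irreducible as a Hilbert module over $\mathbb{C}[z,w]$. This forces $\mathfrak{R} = \{0\}$ or $\mathfrak{R} = \HD$, hence $\mathcal{S}_1 = \{0\}$ or $\mathcal{S}_1 = \HT$, contradicting properness. The main obstacle I anticipate is the transfer step: $M_1$ is not literally a function of $M_1 M_2$ and $M_2$ in a bounded-algebra sense because $M_2$ is not invertible, so I must handle the kernel of $M_2^*$ carefully and verify that $P_{\mathfrak{R}}$ genuinely commutes with $M_1$ there. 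An alternative that sidesteps this: work directly on $\HT$ with the basis $e_{i,j} = \frac{1}{w}(\frac{z}{w})^i w^j$, note $M_w^* e_{i,j} = e_{i,j-1}$ for $j \ge 0$ and $= 0$ for $j = -1$ wait $j=-1$ gives $M_w^* e_{i,-1}$; more carefully $M_w$ acts as $e_{i,j} \mapsto e_{i,j+1}$, so its adjoint lowers $j$ with a kernel, and $M_z$ acts as $e_{i,j}\mapsto e_{i+1,j+1}$; from $P_{\mathcal{S}_1}$ commuting with both and with $M_w^*$ one deduces $P_{\mathcal{S}_1}$ commutes with $M_z M_w^*$, which shifts $i \mapsto i+1$ fixing $j$ on the appropriate range, and combined with the $j$-lowering this generates enough transitivity on the index set $\mathcal{I}$ to conclude $P_{\mathcal{S}_1}$ is a scalar multiple of the identity on each "column" and then globally. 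I would present whichever of these is shorter, most likely the reduction-to-$\HD$ argument with the kernel subtlety dispatched in one line using that $P_{\mathfrak{R}}$ commutes with the projection $I - M_2 M_2^*$.
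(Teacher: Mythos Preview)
Your proposal is correct in spirit and both routes you sketch can be made to work, but the approach is genuinely different from the paper's. The paper's proof is a two-line appeal to an external result: it observes that a decomposition $\HT=\mathcal{S}\oplus\mathcal{S}^\perp$ into submodules makes $\mathcal{S}$ a joint reducing subspace for $(M_z,M_w)$, and then invokes \cite[Corollary~6.3]{CJP2023}, which asserts that $(M_z,M_w)$ has no nontrivial common reducing subspace on $\HT$. You, by contrast, are essentially reproving that corollary, either by transferring to $\HD$ via $\Psi$ or by a direct basis argument.

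One comment on the transfer step: your formula $M_1=(M_1M_2)M_2^*$ only holds on $\operatorname{ran}(M_2)$, and the ``correction on $\ker M_2^*$'' you allude to is not automatically handled just by knowing that $P_{\mathfrak{R}}$ preserves $\ker M_2^*$; you would still need $P_{\mathfrak{R}}$ to commute with $M_1|_{\ker M_2^*}$, which is exactly what is in question. The clean fix is to multiply on the other side: since $M_1,M_2$ commute and $M_2$ is an isometry, $M_2^*(M_1M_2)=M_2^*M_2M_1=M_1$ holds on all of $\HD$, and $P_{\mathfrak{R}}$ commutes with both $M_2^*$ and $M_1M_2$, hence with $M_1$. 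With this one-line identity the transfer argument is complete and requires no case analysis on $\ker M_2^*$. Your alternative direct approach via the basis $e_{i,j}=\frac{1}{w}(z/w)^i w^j$ also works: the joint kernel of $M_z^*,M_w^*$ is spanned by $e_{0,0}$, a commuting projection must send $e_{0,0}$ to $0$ or to itself, and then words in $M_z,M_w,M_w^*$ reach every basis vector.

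What each buys: the paper's proof is shorter but not self-contained; your argument is longer but reproves the needed irreducibility from scratch, and the transfer version makes transparent that the absence of reducing subspaces for $(M_z,M_w)$ on $\HT$ is equivalent, via $\Psi$, to the classical irreducibility of $(M_1,M_2)$ on $\HD$.
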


\begin{proof}
Suppose $\HT=\mathcal{S}\oplus\mathcal {S}^\perp$ for some submodule $\mathcal{S}$ such that  $\mathcal{S}^\perp$ is also a submodule. It follows that $\mathcal{S}$ is a reducing subspace of $\HT$. Therefore, by \cite[Corollary 6.3]{CJP2023}, the desired conclusion follows.
\end{proof}

Recall that two submodules $\mathcal{M}$ and $\mathcal{N}$ are said to have a positive angle (see \cite[p. 219]{DY2000}) if
\begin{align*}
\sup\left\{|\langle f,g\rangle|:f\in\mathcal{M},g\in\mathcal{N},\|f\|=\|g\|=1\right\}<1.    
\end{align*}
We claim that no two submodules can have a positive angle. This follows from the following lemma, together with an argument analogous to that used in \cite[Corollary 4.6]{DY2000}.

\begin{lemma}
Let $\mathcal S$ be a nontrivial submodule of $\HT.$ Then the joint minimal unitary dilation of $S_z$ and $S_w$ are the multiplications by $z$ and $w,$ respectively on $L^2(\mathbb T^2).$
\end{lemma}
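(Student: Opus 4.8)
The plan is to realize the dilation concretely by transporting the problem to $\HD$ via the unitary $\Psi$ and then invoking the known dilation theory for submodules of the Hardy space of the bidisc. First I would recall that $S_z = M_z|_{\mathcal S}$ and $S_w = M_w|_{\mathcal S}$ are the restrictions of the multiplication operators on $\HT$, and that $M_z, M_w$ on $\HT$ are themselves subnormal with minimal normal extension given by multiplication by $z$ and $w$ on $L^2(\mathbb T^2)$ — this is essentially the content of the identification of $\HT$ with $H^2(\partial_d\TRH)\subseteq L^2(\mathbb T^2)$ recalled in the introduction, together with the distinguished boundary being $\mathbb T^2$. Indeed, for $f\in\HTO$ one checks directly that $\|M_z^{*k_1}M_w^{*k_2}f\|$ behaviour forces $M_z, M_w$ to be (doubly commuting) pure isometries, hence shifts, on any submodule, as already noted in the excerpt preceding this lemma.

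The key step is to show that the pair $(S_z, S_w)$ on $\mathcal S$ has $(M_z, M_w)$ on $L^2(\mathbb T^2)$ as its joint minimal unitary dilation, i.e. that
\begin{align*}
S_z^{k_1} S_w^{k_2} = P_{\mathcal S}\, M_z^{k_1} M_w^{k_2}\big|_{\mathcal S},\qquad S_z^{*k_1} S_w^{*k_2} = P_{\mathcal S}\, M_z^{*k_1} M_w^{*k_2}\big|_{\mathcal S},
\end{align*}
for all $k_1, k_2 \ge 0$, and that $L^2(\mathbb T^2) = \overline{\operatorname{span}}\{M_z^{k_1} M_w^{k_2}\mathcal S : k_1, k_2\in\mathbb Z\}$. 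The first (positive-power) identity is immediate since $\mathcal S$ is invariant under $M_z, M_w$. For the adjoint identities one uses that $\mathcal S$ is semi-invariant — more precisely, since $M_z, M_w$ restrict to $L^2(\mathbb T^2)$-multiplications whose restriction to $\HT$ is a $*$-homomorphism up to the Hardy projection, the standard Sarason-type lemma gives that compressions to $\mathcal S$ multiply correctly provided the spectral subspace generated is all of $L^2$. Concretely, I would push everything through $\Psi$: by \eqref{Diagram}, $\Psi M_z \Psi^{-1} = M_1 M_2$ and $\Psi M_w \Psi^{-1} = M_2$ on $\HD$, and $\Psi$ extends to a unitary of $L^2(\mathbb T^2)$ onto itself (it is multiplication by $w$ composed with the measure-preserving change of variables $\varphi^{-1}$, which maps $\mathbb T^2$ onto $\mathbb T^2$). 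Under this transport, $\mathfrak S := \Psi(\mathcal S)$ need not be a submodule of $\HD$ (by the example in the excerpt), but one checks $\mathfrak S$ is invariant under $M_2$ and under $M_1 M_2$; this is exactly enough Toeplitz-type structure to run the classical argument that the minimal unitary dilation of the compressed multiplications on such a subspace of $H^2(\mathbb D^2)$ is multiplication by the coordinate functions on $L^2(\mathbb T^2)$. The minimality/spanning statement reduces to showing $\overline{\operatorname{span}}\{\bar z^{k_1}\bar w^{k_2} f : k_1,k_2\ge0,\ f\in\mathfrak S\} = L^2(\mathbb T^2)$, which follows because $\mathfrak S$ contains a nonzero function and $L^\infty$-multiples together with conjugation generate $L^2$ over the torus (a standard Wiener-type density argument, using that a nonzero $H^2(\mathbb D^2)$ function has full support on $\mathbb T^2$).

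The main obstacle I anticipate is the adjoint compatibility: unlike the bidisc, $\HT$ is not doubly commuting as a submodule of $L^2(\mathbb T^2)$ in the naive sense, and $\Psi(\mathcal S)$ is genuinely not $M_1$-invariant, so one cannot directly quote the bidisc submodule dilation theorem. The fix is to observe that what the dilation argument actually needs is only that $\mathcal S$ is invariant under $M_z$ and $M_w$ (true by hypothesis) and that these are restrictions of the normal operators $M_z, M_w$ on $L^2(\mathbb T^2)$ with $\HT$ itself semi-invariant between $L^2(\mathbb T^2)$ and $\mathcal S$; then minimality of the dilation forces the dilation space to be the smallest reducing subspace of $(M_z, M_w)$ on $L^2(\mathbb T^2)$ containing $\mathcal S$, and one shows this is all of $L^2(\mathbb T^2)$ using that $\HT$ itself generates $L^2(\mathbb T^2)$ under the coordinate multiplications and their adjoints (equivalently, $H^2(\partial_d\TRH)$ is not contained in any proper reducing subspace, which can be read off from the description of $\mathcal I$). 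Once this spanning fact and the semi-invariance are in place, the conclusion is the uniqueness of minimal unitary dilations.
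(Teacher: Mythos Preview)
Your final paragraph is the correct argument and is essentially what the paper does: the minimal unitary dilation of the commuting isometries $(S_z,S_w)$ is multiplication by $z,w$ on the smallest reducing subspace $\hat{\mathcal S}=\overline{\operatorname{span}}\{z^iw^jf:f\in\mathcal S,\ i,j\in\mathbb Z\}\subseteq L^2(\mathbb T^2)$, and one must show $\hat{\mathcal S}=L^2(\mathbb T^2)$. The paper dispatches this in two lines: since $\hat{\mathcal S}$ is invariant under $z,w,\bar z,\bar w$, the Wiener-type lemma \cite[Lemma~3]{GM1988} gives $\hat{\mathcal S}=1_B L^2(\mathbb T^2)$ for some measurable $B\subseteq\mathbb T^2$; but $\mathcal S$ contains a nonzero element of $H^2(\partial_d\TRH)$, which cannot vanish on a set of positive measure, forcing $B=\mathbb T^2$.

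Everything preceding your last paragraph is an unnecessary detour. The transport via $\Psi$ buys nothing --- you yourself note that $\Psi(\mathcal S)$ need not be $M_1$-invariant, so no off-the-shelf bidisc result applies, and you end up retreating to the direct argument anyway. The ``adjoint compatibility'' worry is a red herring: $S_z,S_w$ are commuting \emph{isometries} (not merely contractions), so their minimal unitary dilation is simply the minimal commuting unitary \emph{extension}, and there is no semi-invariance subtlety to negotiate. Also, your aside that $M_z,M_w$ are ``doubly commuting pure isometries'' on $\HT$ is false --- the paper explicitly remarks (just before Proposition~\ref{NOT-Doubly-com-sub}) that $\HT$ is not a doubly commuting submodule of itself. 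Drop the first two paragraphs, state the construction of $\hat{\mathcal S}$ directly, and invoke the Wiener lemma plus non-vanishing; that is the whole proof.
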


\begin{proof}
Define $$\hat{\mathcal{S}} = \overline{\operatorname{span}}\{ z^i w^j f : f \in \mathcal{S}, \, i,j \in \mathbb{Z} \},$$ where the closure is taken in $L^{2}(\mathbb{T}^{2})$. Then the multiplication operators by $z$ and $w$ on $\hat{\mathcal{S}}$ are the joint minimal unitary dilation of the operators $S_z$ and $S_w$ acting on $\mathcal{S}$. Since $\hat{\mathcal{S}} \subseteq L^2(\mathbb{T}^2)$ and is invariant under multiplication by $z$, $w$, $\bar{z}$, and $\bar{w}$, it follows from \cite[Lemma 3]{GM1988} that $\hat{\mathcal{S}} = 1_B L^2(\mathbb{T}^2)$ for some measurable subset $B$ of $\mathbb{T}^2$. However, $\mathcal{S} \subseteq \hat{\mathcal{S}}$, and any nonzero function in $H^2(\partial_{d}(\TRH))$ can not vanish on a subset of $\mathbb{T}^2$ of positive measure. Therefore, we conclude that $\hat{\mathcal{S}} = L^2(\mathbb{T}^2)$.
\end{proof}

We conclude this section with a result that is of independent interest. Specifically, the following result shows that for any nonzero submodule $\mathcal{S}$ of $\HT,$ we have $\mathcal{S}\ominus z\mathcal{S}\not\perp\mathcal{S}\ominus w\mathcal{S}$ (cf. \cite{DeSa25} for an analogous result in the polydisc setting).

\begin{proposition}
Let $\mathcal{S}$ be a submodule of $\HT$. Then $\mathcal{S}=\{0\}$ if and only if 
$$(I_{\mathcal{S}}-S_{z}S_{z}^*)(I_{\mathcal{S}}-S_{w}S_{w}^*)=0.$$  
\end{proposition}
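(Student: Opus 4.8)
The plan is to prove the contrapositive in each direction, the nontrivial implication being that $(I_{\mathcal{S}}-S_{z}S_{z}^*)(I_{\mathcal{S}}-S_{w}S_{w}^*)=0$ forces $\mathcal{S}=\{0\}$. The forward direction is vacuous: if $\mathcal{S}=\{0\}$ then all operators in sight are zero. For the converse, observe that $I_{\mathcal{S}}-S_{z}S_{z}^*$ is the orthogonal projection $P_{\mathcal{W}_{1}}$ onto $\mathcal{W}_{1}=\mathcal{S}\ominus z\mathcal{S}$, and $I_{\mathcal{S}}-S_{w}S_{w}^*$ is the projection $P_{\mathcal{W}_{2}}$ onto $\mathcal{W}_{2}=\mathcal{S}\ominus w\mathcal{S}$ — this is because $S_z, S_w$ are (pure) shifts on any submodule of $\HT$, as already noted in the alternate proof of Theorem \ref{Doubly-com-sub}, so $S_zS_z^*$ and $S_wS_w^*$ are the projections onto $z\mathcal{S}$ and $w\mathcal{S}$ respectively. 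Hence the hypothesis says exactly that $P_{\mathcal{W}_{1}}P_{\mathcal{W}_{2}}=0$, i.e. $\mathcal{W}_{1}\perp\mathcal{W}_{2}$, and I must show this can only happen when $\mathcal{S}=\{0\}$.

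Next I would assume, for contradiction, that $\mathcal{S}\neq\{0\}$ and $\mathcal{W}_{1}\perp\mathcal{W}_{2}$, and transfer the problem to $\HD$. Pick $0\neq g_{1}\in\mathcal{W}_{1}$ and $0\neq g_{2}\in\mathcal{W}_{2}$ (both wandering subspaces of a nonzero shift are nonzero), and set $\psi_{k}=\Psi(g_{k})=J_{\varphi^{-1}}\cdot g_{k}\circ\varphi^{-1}\in\HD$, which are nonzero since $\Psi$ is unitary. The orthogonality $g_{1}\perp z^{m}w^{n}g_{1}'$-type relations used in the proof of Theorem \ref{Doubly-com-sub} can be run here with $g_{1}'=g_{1}$: from $g_{1}\in\mathcal{S}\ominus z\mathcal{S}$ we get $\langle g_{1}, z^{m}g_{2}\rangle=0$ and $\langle g_{1}, w^{n}g_{2}\rangle=0$-style identities, but more directly, using $g_{1}\perp g_{2}$ together with the module structure, I would extract from \eqref{Inner Product Relation} and \eqref{Diagram} a full set of Fourier orthogonality conditions $\int_{\mathbb{T}^{2}}z^{m}w^{n}\psi_{1}\overline{\psi_{2}}\,d\sigma=0$ for all $(m,n)\neq(0,0)$, exactly as in the derivation of \eqref{DCS-3}. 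Indeed $g_{1}\in\mathcal{W}_{2}$ gives the conditions in the $n$-direction, $g_{1}\in\mathcal{W}_{1}$ gives them in the $z/w$-direction after applying $\varphi^{-1}$, and the mixed monomials follow by combining these with the submodule invariance — the computation is verbatim the block \eqref{DCS-1}--\eqref{DCS-3}, with $(g_{1},g_{2})$ in place of the earlier pair. Therefore $\psi_{1}\overline{\psi_{2}}$ is constant a.e. on $\mathbb{T}^{2}$, say $\psi_{1}\overline{\psi_{2}}=c$.

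Finally I would derive the contradiction. If $c\neq 0$, then $|\psi_{1}|=|c|/|\psi_{2}|$ and $\psi_{1}\overline{\psi_{2}}=c$ force (by the standard argument, e.g. as in \cite[Theorem 2]{M1988}) that $\psi_{1}$ and $\psi_{2}$ are, up to unimodular constants, the \emph{same} inner function $\theta$ on $\mathbb{D}^{2}$; but then $g_{1}$ and $g_{2}$ are scalar multiples of one another, hence $g_{1}\in\mathcal{W}_{1}\cap\mathcal{W}_{2}$ and $g_{1}\perp g_{1}$, so $g_{1}=0$, a contradiction. If $c=0$, then $\psi_{1}\overline{\psi_{2}}=0$ a.e.; since $\psi_{2}$ is a nonzero $H^{2}(\mathbb{D}^{2})$ function it is nonzero a.e. on $\mathbb{T}^{2}$ (its zero set has measure zero), so $\psi_{1}=0$ a.e., again contradicting $\psi_{1}\neq 0$. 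Either way $\mathcal{S}=\{0\}$, which is what we wanted. The main obstacle, and the step I would write out most carefully, is the derivation of the complete orthogonality \eqref{DCS-3}-analogue for the single pair $(g_1,g_2)$ using only $g_1\perp g_2$ and the two wandering conditions $g_1\in\mathcal{W}_1\cap\mathcal{W}_2$ (note $g_1\in\mathcal{W}_2$ is needed and follows from nothing automatically — one should instead symmetrize, running the argument with the roles of $\mathcal{W}_1,\mathcal{W}_2$ and of $g_1,g_2$ interchanged to cover all monomial directions, exactly as the chain \eqref{DCS-1}--\eqref{DCS-5} does); once that Fourier-vanishing statement is in hand, the rest is the familiar "constant modulus ratio implies proportional inner functions" dichotomy.
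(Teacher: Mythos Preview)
Your initial setup is correct: the hypothesis is exactly $P_{\mathcal{W}_1}P_{\mathcal{W}_2}=0$, i.e.\ $\mathcal{W}_1\perp\mathcal{W}_2$. The gap is in the Fourier step. The derivation of \eqref{DCS-1}--\eqref{DCS-5} in Theorem~\ref{Doubly-com-sub} uses two ingredients you do not have: (a) doubly commutativity, which via \cite[Lemma~6.2]{MaSa2019} gives $S_w\mathcal{W}_1\subseteq\mathcal{W}_1$ and $S_z\mathcal{W}_2\subseteq\mathcal{W}_2$; and (b) the fact that both $g_1,g_2$ lie in $\mathcal{W}_1\cap\mathcal{W}_2$. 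Here you only know $g_1\in\mathcal{W}_1$, $g_2\in\mathcal{W}_2$, and $g_1\perp g_2$. From these alone, writing $F=\psi_1\overline{\psi_2}$ and using $\langle z^mw^ng_1,g_2\rangle_{\HT}=\hat{F}(-m,-(m+n))$, the relations $g_2\in\mathcal{W}_2$ and $g_1\in\mathcal{W}_1$ yield $\hat{F}(a,b)=0$ only on $\{a\le 0,\,b<a\}\cup\{a\ge 1,\,b\ge a\}\cup\{(0,0)\}$. Coefficients such as $\hat{F}(1,0)$ or $\hat{F}(0,1)$ are untouched, and ``symmetrizing'' the roles of $g_1,g_2$ does not produce them either, since neither $S_w\mathcal{W}_1\subseteq\mathcal{W}_1$ nor $g_1\in\mathcal{W}_2$ is available. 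So $\psi_1\overline{\psi_2}=\text{const}$ does not follow, and the argument stalls.

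The paper's proof bypasses the Fourier machinery entirely and is much shorter. From $\mathcal{W}_1\perp\mathcal{W}_2$ one has $\mathcal{W}_2=\mathcal{S}\ominus w\mathcal{S}\subseteq z\mathcal{S}$; then $w^n(\mathcal{S}\ominus w\mathcal{S})\subseteq z\mathcal{S}$ for all $n\ge 0$, and the Wold decomposition for the pure shift $S_w$ gives $\mathcal{S}=\bigoplus_{n\ge 0}w^n(\mathcal{S}\ominus w\mathcal{S})\subseteq z\mathcal{S}$, so $M_z^*\mathcal{S}\subseteq\mathcal{S}$. Taking adjoints in the hypothesis and repeating with $z,w$ interchanged gives $M_w^*\mathcal{S}\subseteq\mathcal{S}$. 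Thus $\mathcal{S}$ is reducing for $(M_z,M_w)$, whence $\mathcal{S}=\{0\}$ or $\mathcal{S}=\HT$ by Proposition~\ref{noreducingsubspace}; the latter is excluded by a one-line computation. Your observation $\mathcal{W}_2\subseteq(\mathcal{W}_1)^\perp=z\mathcal{S}$ is exactly the entry point to this argument---you should follow it with Wold rather than detouring through Theorem~\ref{Doubly-com-sub}.
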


\begin{proof}
Let $(I_{\mathcal{S}}-S_{z}S_{z}^*)(I_{\mathcal{S}}-S_{w}S_{w}^*)=0$. Then, we have 
$$(I_{\mathcal{S}}-S_{w}S_{w}^*)\mathcal{S}\subset\text{ker}(I_{\mathcal{S}}-S_{z}S_{z}^*),$$
which is same as saying $\mathcal{S}\ominus w\mathcal{S}\subset z\mathcal{S}$. Consequently, we have $w^n(\mathcal{S}\ominus w\mathcal{S})\subset z\mathcal{S}$ for all $n\ge0$. From the Wold decomposition \cite{Wold,NaFo-Book}, we have $$\mathcal{S}=\bigoplus_{n=0}^{\infty}w^{n}(\mathcal{S}\ominus w\mathcal{S}),$$ and therefore we obtain $\mathcal{S}\subset z\mathcal{S}$, that is, $M_{z}^*\mathcal{S}\subset\mathcal{S}$. On the other hand, by taking adjoint in $(I_{\mathcal{S}}-S_{z}S_{z}^*)(I_{\mathcal{S}}-S_{w}S_{w}^*)=0$ we further obtain $M_{w}^*\mathcal{S}\subset\mathcal{S}$. This yields that $\mathcal S$ is a joint reducing subspace. Thus by Proposition \ref{noreducingsubspace}, we get $\mathcal{S}=\{0\}$ or $\mathcal{S}=\HT.$ $\mathcal{S}=\HT$ is not possible as 
\begin{align*}
\left(I_{\HT}-M_{z}M_{z}^*\right)\left(I_{\HT}-M_{w}M_{w}^*\right)\left(\frac{1}{w}\left(\frac{z}{w}\right)\right)\neq 0.
\end{align*}
This completes the proof.
\end{proof}
\section{Doubly commuting quotient modules of the form $(\theta\HT)^\perp$}\label{Sec3}
In this section, we initiate a study of the doubly commuting quotient modules of $\HT$ of the form $\mathcal{Q}_{\theta}=\HT \ominus \theta\HT$, with $\theta$  inner on $\TRH$ (see \cite[Definition 2.5]{JP2024}). We provide a complete characterization when $\theta\circ \varphi^{-1}$ is a product of two one-variable inner functions. We also show that several existing results concerning the quotient modules of $\HD$ do not hold in this context. The study of doubly commuting quotient modules of the Hardy module $\HD$ was initiated by Douglas and Yang in \cite{DY2000} and \cite{DY1998} (see also \cite{BCL1978}). Izuchi, Nakazi, and Seto later classified the doubly commuting quotient modules of $\HD$ in \cite[Theorem 2.1]{INS2004}. This result was subsequently generalized by Sarkar, who provided a complete characterization of the doubly commuting quotient modules of $H^2(\mathbb{D}^n)$ for $n \ge 2$ in \cite[Corollary 3.3]{Sarkar14}.

\cite[Theorem 2.1]{INS2004} provides a complete characterization of doubly commuting quotient modules in $\HD$. However, this result does not extend in the case of $\HT.$ For instance, the quotient module $\HT\ominus z^2\HT$ is not doubly commuting.

Let $\mathfrak{S}$ be a submodule of $\HD$. Then, by Proposition \ref{Psi-inverse-submodule}, $\Psi^{-1}(\mathfrak{S})$ is a submodule of $\HT$. Moreover,
\begin{align*}
f \in \HT \ominus \Psi^{-1}(\mathfrak{S})  &\iff \langle f,  \Psi^{-1}(h) \rangle_{\HT}=0, \quad h \in \mathfrak{S}\\
&\iff \langle \Psi(f), h \rangle_{\HD}=0,\quad h\in \mathfrak{S}. 
\end{align*}
Hence, 
\begin{align}\label{Transference}
\HT \ominus \Psi^{-1}(\mathfrak{S})=\Psi^{-1}(\HD\ominus\mathfrak{S}). 
\end{align}

The following result is an analog of \cite[Corollary 4.2]{DY2000} in the context of $\HT.$ To motivate the analog, we make a couple of remarks.
\begin{remark} 
Consider the quotient modules $\mathfrak{Q}=\HD \ominus\mathfrak{S}$ and $\mathcal{Q}=\HT \ominus\mathcal{S}$, where $\mathfrak{S}$ and $\mathcal{S}$ are submodules of $\HD$ and $\HT$, respectively. In \cite[Corollary 4.2]{DY2000}, Douglas and Yang proved that $\mathfrak{Q}$ is invariant under $M_1$ if and only if $\mathfrak{S}=\theta\HD$ for some inner function $\theta$ depending on $w$ only. In this situation, $\mathfrak{Q}$ is doubly commuting.  However, this fails for $\mathcal{Q}$, as explained below.
\begin{enumerate}
\item  Consider $\mathcal{S}=\theta\HT$, where $\theta(z,w)=w$. Then $\mathcal{Q}$ is doubly commuting, but fails to be invariant under $M_z.$
\item\label{Exam z inv quo} Consider the submodule $$\mathcal{S}=\overline{\operatorname{span}}\left\{\frac{1}{w}\left(\frac{z}{w}\right)^m w^n : m, n \in \mathbb Z_+, m <n\right\}\subset\HT.$$ Note that $\mathcal Q =\overline{\operatorname{span}}\left\{\frac{1}{w}\left(\frac{z}{w}\right)^m w^n : m \ge n\ge 0\right\}.$ Clearly, $\mathcal{Q}$ is invariant under $M_z.$ However, $\mathcal{Q}$ is not doubly commuting. Note also that $[S_z^*, S_w]=0$ on $\mathcal{S},$ which implies that although $\mathcal Q$ is $z$-invariant, the submodule  $\mathcal{S}$ is not of the form $\theta\HT$ for any inner function $\theta$ on $\triangle_H$ (see Proposition \ref{NOT-Doubly-com-sub}).  \eof
\end{enumerate}
\end{remark}

\begin{proposition}
Let $\mathcal{Q}_{\theta}$ be a quotient module for some inner function $\theta$ on $\TRH.$ Then $\mathcal{Q}_{\theta}$ is invariant under $M_{\varphi_1}$ if and only if $\theta$ is an inner function that depends only on $w$ $\rm ($see $\eqref{phi-map}$ for $\varphi_1$$\rm)$.
\end{proposition}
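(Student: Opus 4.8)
The plan is to reduce the statement to the known bidisc result \cite[Corollary 4.2]{DY2000} via the transference machinery already set up in the paper. Recall from \eqref{Diagram} that $M_2 = \Psi M_w \Psi^{-1}$ and $M_1 M_2 = \Psi M_z \Psi^{-1}$, so $M_1 = \Psi M_{\varphi_1} \Psi^{-1}$ since $M_{\varphi_1}$ is multiplication by $z/w$ on $\HT$ and $\varphi_1 \circ \varphi^{-1}(z,w) = z$. Writing $\mathcal{Q}_\theta = \HT \ominus \theta \HT$ and $\mathfrak{S} = \Psi(\theta\HT)$, the identity \eqref{Transference} (or a direct computation using that $\Psi$ is unitary) gives $\Psi(\mathcal{Q}_\theta) = \HD \ominus \mathfrak{S}$. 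Since $\theta$ is inner on $\TRH$, Remark \ref{hd-inner-ht} tells us $\theta \circ \varphi^{-1}$ is inner on $\mathbb{D}^2$, and $\mathfrak{S} = \Psi(\theta\HT) = (J_{\varphi^{-1}} \cdot \theta \circ \varphi^{-1}) \cdot (J_{\varphi^{-1}})^{-1} \HD = (\theta\circ\varphi^{-1}) \HD$, using that $\Psi(\HT) = \HD$ with $\Psi(\HT) \ni \Psi(\phi) = J_{\varphi^{-1}} \phi\circ\varphi^{-1}$; more carefully, $\Psi(\theta f) = \Psi(\theta)\cdot(f\circ\varphi^{-1})\cdot(\text{correction})$, but since $\Psi(\theta f) = J_{\varphi^{-1}}(\theta f)\circ\varphi^{-1} = (\theta\circ\varphi^{-1})\cdot(J_{\varphi^{-1}} f\circ\varphi^{-1}) = (\theta\circ\varphi^{-1})\Psi(f)$, we get $\mathfrak{S} = (\theta\circ\varphi^{-1})\,\Psi(\HT) = (\theta\circ\varphi^{-1})\HD$. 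So $\mathfrak{S} = q\HD$ with $q = \theta\circ\varphi^{-1}$ inner on $\mathbb{D}^2$.

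Next I would translate the invariance condition across $\Psi$. Since $M_1 = \Psi M_{\varphi_1}\Psi^{-1}$ and $\Psi$ is unitary with $\Psi(\mathcal{Q}_\theta) = \HD\ominus q\HD$, the subspace $\mathcal{Q}_\theta$ is invariant under $M_{\varphi_1}$ if and only if $\HD \ominus q\HD$ is invariant under $M_1$. Now apply \cite[Corollary 4.2]{DY2000}: the quotient module $\HD\ominus q\HD$ is invariant under $M_1$ if and only if $q$ is an inner function depending only on the second variable $w$. Unwinding, $\mathcal{Q}_\theta$ is $M_{\varphi_1}$-invariant iff $\theta\circ\varphi^{-1}(z,w) = q(w)$ depends only on $w$, i.e. $\theta\circ\varphi^{-1}(z,w) = \theta_2(w)$ for some inner $\theta_2$ on $\mathbb{D}$. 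Composing back with $\varphi$ via \eqref{phi-map}, $\theta(z,w) = \theta_2(\varphi_2(z,w)) = \theta_2(w)$, so $\theta$ depends only on $w$; conversely if $\theta(z,w) = \theta_2(w)$ with $\theta_2$ inner on $\mathbb{D}$, then $\theta$ is inner on $\TRH$ by Remark \ref{hd-inner-ht} (as $\theta\circ\varphi^{-1}(z,w) = \theta_2(w)$ is inner on $\mathbb{D}^2$) and the above chain of equivalences shows $\mathcal{Q}_\theta$ is $M_{\varphi_1}$-invariant.

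The one point that requires a little care — and is the main thing to get right rather than a genuine obstacle — is the bookkeeping that $\Psi$ carries $\theta\HT$ to $(\theta\circ\varphi^{-1})\HD$ and hence $\mathcal{Q}_\theta$ to a Beurling-type quotient module of $\HD$ to which \cite[Corollary 4.2]{DY2000} literally applies; here one uses that $\Psi$ is multiplicative against $H^\infty$ functions precomposed with $\varphi$, which follows directly from the formula \eqref{Psi-map} for $\Psi$ and cancels the Jacobian factor cleanly. Everything else is a formal transfer through the unitary $\Psi$ using \eqref{Diagram}, \eqref{Transference}, and Remark \ref{hd-inner-ht}. I should also note explicitly, to match the phrasing of the statement, that "$\theta$ depends only on $w$" for a function on $\TRH$ means $\theta(z,w) = \eta(w)$ for $(z,w)\in\TRH$, which under $\varphi$ corresponds exactly to $\theta\circ\varphi^{-1}$ depending only on its second coordinate, so no inconsistency arises between the two domains.
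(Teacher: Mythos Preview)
Your proof is correct and follows the same transference strategy as the paper: carry $\mathcal{Q}_\theta$ across $\Psi$ to the Beurling-type quotient $\HD\ominus(\theta\circ\varphi^{-1})\HD$ and invoke \cite[Corollary~4.2]{DY2000}. The only difference is cosmetic: the paper verifies the ``if'' direction by a short direct inner-product computation against the orthonormal basis $\{\theta\frac{1}{w}(\frac{z}{w})^i w^j\}$ of $\theta\HT$, whereas you handle both directions uniformly through the identity $M_1=\Psi M_{\varphi_1}\Psi^{-1}$ and the full biconditional in \cite{DY2000}; your route is slightly more economical, the paper's slightly more self-contained.
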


\begin{proof}
Let $\theta$ be an inner function that depends on $w$ only. Let $f\in\mathcal{Q}_{\theta}.$ First note that $$\left\langle\frac{z}{w}f,\theta \frac{1}{w}w^j\right\rangle=0\quad\mbox{for all $j\ge0$}.$$ Now for any $i\ge 1$ and $j\ge 0,$
$$\left\langle \frac{z}{w}f , \theta\frac{1}{w} \left(\frac{z}{w}\right)^iw^j\right\rangle=\left\langle f,\theta\frac{1}{w} \left(\frac{z}{w}\right)^{i-1}w^j\right \rangle=0.$$
Hence, $\frac{z}{w}f\in\mathcal{Q}_{\theta}$, showing that $\mathcal{Q}_{\theta}$ is invariant under $M_{\varphi_1}.$
	
Conversely, let $\mathcal{Q}_{\theta}$ be invariant under $M_{\varphi_1}.$ Arguing similarly to \eqref{Transference}, we obtain
\[\Psi(\mathcal Q_{\theta})=\HD \ominus\theta\circ \varphi^{-1}\HD,\] 
which is invariant under $M_1.$ An application of Remark \ref{hd-inner-ht} and \cite[Corollary 4.2]{DY2000} yields the conclusion.
\end{proof}

We refer the reader again to \eqref{Exam z inv quo}, which shows that even if $\HT\ominus\mathcal{S}$ is invariant under $M_{\varphi_1},$ the submodule $\mathcal{S}$ need not be of the form $\theta\HT$ for any inner function $\theta$ on $\triangle_{H}$. We now establish a result that plays a key role in our analysis.

\begin{lemma}\label{Relation_Quotient module actions}
Assume that $\mathfrak{S}$ is a submodule of $\HD$. Then  $\mathcal{Q}=\HT\ominus\Psi^{-1}(\mathfrak{S})$ is a quotient module of $\HT$ and
\beqn
Q_1Q_2=\Psi Q_z\Psi^{-1} \mbox{ ~~and~~ }Q_2=\Psi Q_w\Psi^{-1}.
\eeqn
\end{lemma}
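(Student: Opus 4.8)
The plan is to verify the two operator identities by reducing them to the already-established relations \eqref{Diagram} on the full space $\HT$, exploiting the fact that $\Psi$ intertwines the relevant projections. First I would record that, by Proposition \ref{Psi-inverse-submodule}, $\Psi^{-1}(\mathfrak{S})$ is a submodule of $\HT$, so $\mathcal{Q}=\HT\ominus\Psi^{-1}(\mathfrak{S})$ is indeed a quotient module; and by \eqref{Transference}, $\Psi(\mathcal{Q})=\HD\ominus\mathfrak{S}=:\mathfrak{Q}$, which is a quotient module of $\HD$. Since $\Psi:\HT\to\HD$ is unitary and maps $\mathcal{Q}$ onto $\mathfrak{Q}$, it intertwines the orthogonal projections: $\Psi P_{\mathcal{Q}}\Psi^{-1}=P_{\mathfrak{Q}}$, equivalently $P_{\mathcal{Q}}=\Psi^{-1}P_{\mathfrak{Q}}\Psi$. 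This is the one structural fact doing all the work.

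Next I would compute directly. Recall $Q_w=P_{\mathcal{Q}}M_w|_{\mathcal{Q}}$ and $Q_2=P_{\mathfrak{Q}}M_2|_{\mathfrak{Q}}$, and from \eqref{Diagram} we have $M_2=\Psi M_w\Psi^{-1}$. For the second identity, take $f\in\mathcal{Q}$ and write
\begin{align*}
\Psi Q_w\Psi^{-1}(\Psi f)=\Psi Q_w f=\Psi P_{\mathcal{Q}}M_w f=\Psi P_{\mathcal{Q}}\Psi^{-1}\,\Psi M_w\Psi^{-1}\,\Psi f=P_{\mathfrak{Q}}M_2(\Psi f)=Q_2(\Psi f),
\end{align*}
using $\Psi P_{\mathcal{Q}}\Psi^{-1}=P_{\mathfrak{Q}}$ and $\Psi M_w\Psi^{-1}=M_2$. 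Since $\Psi f$ ranges over all of $\mathfrak{Q}$, this gives $\Psi Q_w\Psi^{-1}=Q_2$ on $\mathfrak{Q}$, i.e. $Q_2=\Psi Q_w\Psi^{-1}$. For the first identity one argues the same way but with the product $M_z$ versus $M_1M_2$: from \eqref{Diagram}, $M_1M_2=\Psi M_z\Psi^{-1}$, so
\begin{align*}
\Psi Q_z\Psi^{-1}=\Psi P_{\mathcal{Q}}M_z|_{\mathcal{Q}}\Psi^{-1}=(\Psi P_{\mathcal{Q}}\Psi^{-1})(\Psi M_z\Psi^{-1})|_{\mathfrak{Q}}=P_{\mathfrak{Q}}M_1M_2|_{\mathfrak{Q}}.
\end{align*}
It then remains to identify $P_{\mathfrak{Q}}M_1M_2|_{\mathfrak{Q}}$ with the product $Q_1Q_2$ of the compressions. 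This is where one must be slightly careful: in general $P_{\mathfrak{Q}}AB|_{\mathfrak{Q}}\neq (P_{\mathfrak{Q}}A|_{\mathfrak{Q}})(P_{\mathfrak{Q}}B|_{\mathfrak{Q}})$. The point is that $\mathfrak{Q}=\HD\ominus\mathfrak{S}$ with $\mathfrak{S}$ a submodule, so $\mathfrak{S}$ is $M_2$-invariant, hence $\mathfrak{Q}$ is $M_2^*$-invariant, which makes $M_2$ essentially upper-triangular with respect to $\mathfrak{Q}\oplus\mathfrak{S}$; concretely $P_{\mathfrak{Q}}M_2(I-P_{\mathfrak{Q}})=0$ is false but $(I-P_{\mathfrak{Q}})M_2^*P_{\mathfrak{Q}}=0$ holds, giving $P_{\mathfrak{Q}}M_2 P_{\mathfrak{Q}}=P_{\mathfrak{Q}}M_2$. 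Using this, for $g\in\mathfrak{Q}$, $P_{\mathfrak{Q}}M_1M_2 g=P_{\mathfrak{Q}}M_1 P_{\mathfrak{Q}}M_2 g=Q_1(P_{\mathfrak{Q}}M_2 g)=Q_1Q_2 g$, as $M_1,M_2$ commute so the order is immaterial. This yields $Q_1Q_2=\Psi Q_z\Psi^{-1}$.

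I expect the main (minor) obstacle to be precisely this last bookkeeping point — justifying that the compression of the product $M_1M_2$ equals the product of compressions $Q_1Q_2$ — which relies on the invariance of $\mathfrak{S}$ under $M_2$ and the commutativity of $M_1$ and $M_2$ on $\HD$; everything else is a formal manipulation with the unitary $\Psi$ and the identities \eqref{Diagram}. An alternative, cleaner route that avoids even this subtlety is to observe that $\Psi$ restricts to a unitary $\mathcal{Q}\to\mathfrak{Q}$ intertwining the compressions of intertwined operators: since $\Psi M_w\Psi^{-1}=M_2$ and $\Psi$ carries $\mathcal{Q}$ onto $\mathfrak{Q}$, the compression of $M_w$ to $\mathcal{Q}$ is unitarily equivalent via $\Psi$ to the compression of $M_2$ to $\mathfrak{Q}$, and likewise for $M_z$ versus $M_1M_2$; then $Q_1Q_2=P_{\mathfrak{Q}}M_1M_2|_{\mathfrak{Q}}$ is itself a standard fact for quotient modules of $\HD$ following from $\mathfrak{Q}$ being coinvariant. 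I would present the short computation above and remark on this equivalent viewpoint.
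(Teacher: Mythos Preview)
Your approach is essentially the same as the paper's: invoke Proposition~\ref{Psi-inverse-submodule} and \eqref{Transference}, establish the projection intertwining $\Psi P_{\mathcal{Q}}\Psi^{-1}=P_{\mathfrak{Q}}$, then use \eqref{Diagram} to reduce both identities to showing $P_{\mathfrak{Q}}M_1M_2|_{\mathfrak{Q}}=Q_1Q_2$, which follows from the invariance of $\mathfrak{S}$ under the coordinate multipliers. One slip to clean up: you write that ``$P_{\mathfrak{Q}}M_2(I-P_{\mathfrak{Q}})=0$ is false'', but in fact it is \emph{true} (it is exactly the $M_2$-invariance of $\mathfrak{S}$) and is equivalent to the identity $P_{\mathfrak{Q}}M_2P_{\mathfrak{Q}}=P_{\mathfrak{Q}}M_2$ you then use; moreover, to insert $P_{\mathfrak{Q}}$ between $M_1$ and $M_2$ in $P_{\mathfrak{Q}}M_1M_2g$ you really want the analogous $M_1$-invariance $P_{\mathfrak{Q}}M_1(I-P_{\mathfrak{Q}})=0$ (this is how the paper argues, via $P_{\mathfrak{Q}}M_1P_{\mathfrak{S}}M_2f=0$), or else your route via $M_2$-invariance and commutativity lands on $Q_2Q_1$, which is fine since $Q_1Q_2=Q_2Q_1$ but deserves a word.
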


\begin{proof}
Let $\mathfrak{Q}=\HD\ominus\mathfrak{S}.$ By Proposition \ref{Psi-inverse-submodule}, $\Psi^{-1}(\mathfrak{S})$ is a submodule of $\HT.$ It follows from \eqref{Transference} that $\mathcal{Q}=\Psi^{-1}(\mathfrak{Q})$. For any $f\in\HD$, we have
$$
\Psi P_{\mathcal{Q}} \Psi^{-1}(f)=\Psi P_{\mathcal{Q}} \Psi^{-1}((I-P_{\mathfrak{Q}})f+P_{\mathfrak{Q}}f)=\Psi P_{\mathcal{Q}} \Psi^{-1}(P_{\mathfrak{Q}}f)=\Psi\Psi^{-1}(P_{\mathfrak{Q}}f)=P_{\mathfrak{Q}}f,
$$ 
where $P_\mathcal{Q}:\HT\to\mathcal{Q}$ and $P_{\mathfrak Q}:\HD\to\mathfrak{Q}$ are the orthogonal projections on $\mathcal{Q}$ and $\mathfrak{Q}$, respectively. Thus we have
\beq \label{Projection relations}
\Psi P_\mathcal{Q}\Psi^{-1}=P_{\mathfrak{Q}}.
\eeq
Now for any $f\in\mathfrak{Q}$, we have
\beqn
\Psi Q_{w}\Psi^{-1}(f)=\Psi P_{\mathcal{Q}}M_{w}\Psi^{-1}(f)\overset{\eqref{Diagram}}{=}\Psi P_{\mathcal{Q}}\Psi^{-1} M_{2}\Psi\Psi^{-1}(f)\overset{\eqref{Projection relations}}{=}P_{\mathfrak{Q}}M_{2}f=Q_{2}f
\eeqn
and 
\begin{align*}
\Psi Q_{z}\Psi^{-1}(f)=\Psi P_{\mathcal{Q}}M_{z}\Psi^{-1}(f)
&\overset{\eqref{Diagram}}{=}\Psi P_{\mathcal{Q}}\Psi^{-1}M_1M_2\Psi\Psi^{-1}(f)\\
&\overset{\eqref{Projection relations}}{=}P_{\mathfrak{Q}}M_1M_2f\\
&=P_{\mathfrak{Q}}M_1M_2f-P_{\mathfrak{Q}}M_1P_{\mathfrak{S}}M_2f\\
&=P_{\mathcal{Q}}M_{1}P_{\mathcal{Q}}M_{2}f=Q_{1}Q_{2}f.
\end{align*}
This completes the proof.
\end{proof}

Under the assumptions of Lemma \ref{Relation_Quotient module actions}, we have the following identity: 
\begin{align*}
[Q_{1}Q_{2},Q_{2}^*]=\Psi[Q_{z},Q_{w}^*]\Psi^{-1}.
\end{align*}
As a consequence, we obtain the equivalence:
\begin{align}\label{Equivalent_DC}
\mbox{$[Q_{1}Q_{2},Q_{2}^*]=0$ on $\mathfrak{Q}$ if and only if $[Q_{z},Q_{w}^*]=0$ on $\Psi^{-1}(\mathfrak{Q})$}.
\end{align} 
In view of \eqref{Equivalent_DC}, we investigate those quotient modules  $\mathfrak{Q}$ of $\HD$ that satisfy
$$[Q_1 Q_2, Q_2^*] = 0,$$
which further helps in characterization of certain doubly commuting quotient modules of $ \HT$ (see Theorem \ref{Hartogscharasepa}). To this end, we introduce the notion of $\varphi$-doubly commuting quotient modules of $\HD$.

\begin{definition}\label{phi-dc}
A quotient module $\mathfrak{Q}$ of $\HD$ is said to be  $\varphi$-\textit{doubly commuting} if $(Q_1Q_2, Q_2)$ is doubly commuting, that is,
$$Q_{1}Q_{2}Q_{2}^*=Q_{2}^*Q_{1}Q_{2}.$$
\end{definition}

\subsection{Characterization of $\varphi$-doubly commuting quotient modules of $\HD$}
The Hardy space on the bidisc admits the tensor product decomposition
$$H^2(\mathbb{D}^2)\cong H^2(\mathbb{D})\otimes H^2(\mathbb{D}),$$
with $z^m w^n$ is identified as $z^m \otimes w^n$, and $H^2(\mathbb{D})$ denotes the Hardy space of the unit disc. Let $T_z$ and $T_w$ denote the multiplication operators on $H^2(\mathbb{D})$ corresponding to the variables $z$ and $w$, respectively. Then the coordinate multiplication operators $M_1$ and $M_2$ on $H^2(\mathbb{D}^2)$ act as
$$M_1 = T_z \otimes I\quad\mbox{and}\quad M_2 = I \otimes T_w.$$ 
Next, consider a quotient module $\mathfrak{Q}=\mathfrak{Q}_{1}\otimes\mathfrak{Q}_{2}$ of $\HD$, where $\mathfrak{Q}_{1}$ and $\mathfrak{Q}_{2}$ are quotient modules of $\HDO$. Following \cite[p.~3]{Sarkar14}, we have the module multiplication operators on $\mathfrak{Q}$ as
\begin{align}\label{Q_1Q_2}
Q_1=P_{\mathfrak{Q}_{1}}T_{z}\big|_{\mathfrak{Q}_{1}}\otimes I_{\mathfrak{Q}_{2}}\quad\mbox{and}\quad Q_{2}=I_{\mathfrak{Q}_{1}}\otimes P_{\mathfrak{Q}_{2}}T_{w}\big|_{\mathfrak{Q}_{2}}.
\end{align}
Then
\begin{align}\label{[Q_2,Q_2*]}
\nonumber[Q_{2},Q_{2}^*]&=I_{\mathfrak{Q}_{1}}\otimes\left[P_{\mathfrak{Q}_{2}}T_{w}T_{w}^*\big|_{\mathfrak{Q}_{2}}-P_{\mathfrak{Q}_{2}}T_{w}^*P_{\mathfrak{Q}_{2}}T_{w}\big|_{\mathfrak{Q}_{2}}\right]\\
&=I_{\mathfrak{Q}_{1}}\otimes\left[P_{\mathfrak{Q}_{2}}T_{w}\big|_{\mathfrak{Q}_{2}},T_{w}^*\big|_{\mathfrak{Q}_{2}}\right],
\end{align}
which along with \eqref{Q_1Q_2} gives
\begin{align}\label{Q_1[Q_2,Q_2*]}
Q_{1}[Q_{2},Q_{2}^*]=P_{\mathfrak{Q}_{1}}T_{z}\big|_{\mathfrak{Q}_{1}}\otimes\left[P_{\mathfrak{Q}_{2}}T_{w}\big|_{\mathfrak{Q}_{2}},T_{w}^*\big|_{\mathfrak{Q}_{2}}\right].
\end{align}
From now on, we will use \eqref{Q_1Q_2}, \eqref{[Q_2,Q_2*]} and \eqref{Q_1[Q_2,Q_2*]}, along with the usual definition of $\HD$, interchangeably wherever necessary.

\begin{lemma}\label{Q2 normal}
Let $\mathfrak{Q}=\mathfrak{Q}_{1}\otimes\mathfrak{Q}_{2}$ be a nontrivial quotient module of $\HD$, where $\mathfrak{Q}_{1}$ and $\mathfrak{Q}_{2}$ are quotient modules of $H^{2}(\mathbb{D}).$ Then $Q_{2}$ is normal if and only if $\mathfrak{Q}_{2}$ is one dimensional.
\end{lemma}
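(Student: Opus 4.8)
The plan is to reduce the statement to a one-variable question about the compression $P_{\mathfrak{Q}_2}T_w\big|_{\mathfrak{Q}_2}$. By \eqref{[Q_2,Q_2*]}, $Q_2$ is normal on $\mathfrak{Q}=\mathfrak{Q}_1\otimes\mathfrak{Q}_2$ if and only if the tensor factor $\bigl[P_{\mathfrak{Q}_2}T_w\big|_{\mathfrak{Q}_2},\,T_w^*\big|_{\mathfrak{Q}_2}\bigr]=0$ on $\mathfrak{Q}_2$; since $\mathfrak{Q}_1\neq\{0\}$, the vanishing of $I_{\mathfrak{Q}_1}\otimes[\,\cdot\,,\,\cdot\,]$ forces the second factor to vanish. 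So it suffices to show: for a quotient module $\mathfrak{Q}_2$ of $\HDO$, the compressed shift $C_w:=P_{\mathfrak{Q}_2}T_w\big|_{\mathfrak{Q}_2}$ is normal if and only if $\dim\mathfrak{Q}_2=1$.

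The backward direction is immediate: if $\mathfrak{Q}_2$ is one-dimensional, then $C_w$ is a scalar, hence normal. For the forward direction, I would invoke the Beurling/Sz.-Nagy–Foias description: every (nontrivial proper) quotient module of $\HDO$ is a model space $\mathfrak{Q}_2=\HDO\ominus\theta\HDO=K_\theta$ for some inner function $\theta$ on $\mathbb{D}$, and $C_w$ is the model operator $S_\theta$. The operator $S_\theta$ is a completely non-unitary contraction with defect indices equal to $1$; a standard fact (e.g.\ via the formula $I-S_\theta^*S_\theta = \langle\cdot,k_\theta\rangle k_\theta$ and $I-S_\theta S_\theta^* = \langle\cdot, \tilde k_\theta\rangle \tilde k_\theta$ where $k_\theta$ and $\tilde k_\theta$ are the normalized reproducing kernels at $0$) shows $[S_\theta^*,S_\theta]$ is a rank-at-most-two self-adjoint operator. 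If $S_\theta$ were normal it would be a normal contraction with no unitary part that is finite rank, forcing $S_\theta=0$ on its whole space, i.e.\ $C_w C_w^*=C_w^*C_w=0$, hence $C_w=0$; but $C_w=0$ means $T_w\mathfrak{Q}_2\perp\mathfrak{Q}_2$, and since $\mathfrak{Q}_2\subseteq\HDO$ and $T_w$ is the forward shift this is possible only if $\mathfrak{Q}_2=\mathbb{C}$ (the constants) — indeed any $f\in\mathfrak{Q}_2$ with $zf\perp\mathfrak{Q}_2$ and $\mathfrak{Q}_2$ backward-shift invariant forces, by iterating $T_w^*$, that $\mathfrak{Q}_2$ is spanned by $1$. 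Alternatively, and more elementarily, one can avoid model-space theory: take any unit vector $f\in\mathfrak{Q}_2$; normality of $C_w$ gives $\|C_w f\|=\|C_w^* f\|$, and since $C_w^* = T_w^*\big|_{\mathfrak{Q}_2}$ while $\|C_w f\|\le\|T_w f\|=\|f\|$ with $\|C_w f\|^2 = \|f\|^2 - |\langle f, \text{(component of }T_w^*\text{-cokernel)}\rangle|^2$, one extracts that the defect of $T_w^*$ restricted to $\mathfrak{Q}_2$ is trivial, which together with the structure of backward-shift-invariant subspaces of $\HDO$ pins down $\dim\mathfrak{Q}_2=1$.

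I expect the main obstacle to be packaging the forward direction cleanly without over-relying on heavy model theory: the cleanest route is probably to observe that $C_w$ normal and $C_w$ a $C_{\cdot 0}$ contraction (since $T_w^{*n}\to 0$ strongly on $\HDO$, hence on $\mathfrak{Q}_2$) together force $C_w$ to be a normal contraction with $C_w^{*n}\to 0$ strongly; the only such operator is one for which $\|C_w^n x\|=\|C_w^{*n}x\|\to 0$ for all $x$, and combined with $\|C_w x\| = \|C_w^* x\|$ (normality) and the spectral theorem this gives $C_w$ is a scalar multiple (of modulus $<1$ or $=0$) only when the space is one-dimensional — more precisely, a normal operator all of whose powers tend strongly to $0$ must have spectrum in the open disc, but a normal contraction with spectrum in $\overline{\mathbb{D}}$ and $C_{\cdot0}$ behaviour forces, if the underlying space has dimension $\ge 2$, a contradiction with the rank-two bound on $I-C_w^*C_w$. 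I would write the argument via the rank bound: $\operatorname{rank}(I-C_w^*C_w)\le 1$ because $T_w^*$ has defect $1$; if $C_w$ is normal then $I-C_wC_w^* = I-C_w^*C_w$ also has rank $\le 1$, so $C_w$ is a rank-$\le 1$ perturbation of a coisometry-and-isometry simultaneously, i.e.\ unitary-plus-finite-rank, and being $C_{\cdot0}$ it has no unitary summand, leaving only the finite-dimensional case, which by a direct check (a finite normal matrix that is $C_{\cdot 0}$ must be the zero matrix, and the zero compression on a backward-shift-invariant subspace of $\HDO$ is one-dimensional) yields $\dim\mathfrak{Q}_2=1$.

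\textbf{Remark on bookkeeping.} Throughout I will use \eqref{Q_1Q_2} and \eqref{[Q_2,Q_2*]} to transfer between the tensor picture and the one-variable picture, and I will use that $\mathfrak{Q}_1\neq\{0\}$ (part of the nontriviality hypothesis) to cancel the $I_{\mathfrak{Q}_1}$ tensor factor; the remaining content is the one-variable claim about $C_w$, whose proof is the heart of the matter.
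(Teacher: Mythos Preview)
Your reduction via \eqref{[Q_2,Q_2*]} to the one-variable question ``$C_w:=P_{\mathfrak Q_2}T_w|_{\mathfrak Q_2}$ is normal $\Leftrightarrow$ $\dim\mathfrak Q_2=1$'' is exactly the paper's first step, and the backward direction is fine. The paper then finishes by simply invoking \cite[Lemma 2.4]{DaGoSa20} for the one-variable statement; you instead try to prove that statement from scratch, and this is where the proposal has genuine gaps.

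The forward argument, in each of its three sketched versions, contains errors. First, the claim that a normal c.n.u.\ contraction ``is forced to be $0$'' (and hence $C_wC_w^*=C_w^*C_w=0$) is false: for $\theta=f_a$ with $a\neq 0$ the space $K_\theta$ is one-dimensional and $C_w$ acts as the scalar $a\neq 0$; this is normal and c.n.u.\ but not zero. Second, the assertion ``a finite normal matrix that is $C_{\cdot 0}$ must be the zero matrix'' is likewise false (take the $1\times 1$ matrix $[a]$ with $0<|a|<1$), so the finite-dimensional endgame does not work as written. Third, the middle ``elementary'' alternative is too vague to constitute an argument: equality $\|C_w f\|=\|C_w^*f\|$ by itself does not pin down $\dim\mathfrak Q_2$ without further structure.

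A correct self-contained route does exist along your rank/\,$C_{\cdot 0}$ lines, but it needs to be stated precisely: both defect operators $I-S_\theta^*S_\theta$ and $I-S_\theta S_\theta^*$ have rank exactly one; normality makes them equal; applying the spectral theorem to the normal contraction $C_w$ then shows that the spectral measure is supported on $\mathbb T$ together with a single point of $\mathbb D$ (with multiplicity one), and the $C_{\cdot 0}$ property $T_w^{*n}\to 0$ kills the $\mathbb T$ part, leaving a one-dimensional space. Alternatively, one can invoke the irreducibility of $S_\theta$ and note that an irreducible normal operator lives on a one-dimensional space. Either of these would close the gap; as written, your forward direction does not.
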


\begin{proof}
Let $Q_{2}$ be normal. From \eqref{[Q_2,Q_2*]}, it follows that
$$\left[P_{\mathfrak{Q}_{2}}T_{w}\big|_{\mathfrak{Q}_{2}},T_{w}^*\big|_{\mathfrak{Q}_{2}}\right]=0,$$
which due to \cite[Lemma 2.4]{DaGoSa20} further implies that $\mathfrak{Q}_{2}$ is one dimensional.

Conversely, let $\mathfrak{Q}_{2}$ be one dimensional. Then, by \cite[Lemma 2.4]{DaGoSa20} and \eqref{[Q_2,Q_2*]}, it follows that $Q_{2}$ is normal. This completes the proof.
\end{proof}

For $a\in\mathbb{D},$ let $f_{a}$ denote the automorphism of the unit disc $\mathbb D$ given by 
\beq \label{automorphismdisc}
f_a(z)=c\frac{z-a}{1-\bar a z}, \quad z \in \mathbb{D}, c\in\mathbb{T}.
\eeq
For future reference, define the functions $k_a(z, w)$ and $m_a(z, w)$ by
\begin{align}\label{R-Kernel}
k_{a}(z,w)=\frac{1}{1-\bar{a}z} \mbox{ and } m_{a}(z,w)=\frac{1}{1-\bar{a}w} ,\quad (z,w)\in\mathbb{D}^{2},
\end{align}
where $a \in \mathbb D.$

If $\mathfrak{Q}$ is a doubly commuting quotient module of $\HD$, then
\beq\label{Doubly-Commute}
[Q_{1}Q_{2},Q_{2}^*] =Q_1Q_2Q_2^*-Q_2^*Q_1Q_2=Q_1[Q_2, Q_2^*].
\eeq
Note that if $[Q_2, Q_2^*]=0$ or $\operatorname{ran}[Q_2, Q_2^*] \subseteq \operatorname{ker}Q_1$, then by \eqref{Doubly-Commute}, it follows that $\mathfrak{Q}$ is $\varphi$-doubly commuting. The following examples illustrate that both conditions can occur.

\begin{example}
Consider the submodule $\mathfrak{S}_1=\overline{\operatorname{span}}\left\{{z}^{i}w^{j}:i, j\ge 0,i+j\ge 1\right\}$. Then, $\mathfrak{Q}_1=\HD\ominus\mathfrak{S}_1$ is spanned by $1$ and is doubly commuting. Moreover, $[Q_2, Q_2^*]=0$. Further, consider the submodule $\mathfrak{S}_2=z\HD.$ Then, $\mathfrak{Q}_{2}=\overline{\operatorname{span}}\left\{w^n: n \ge 0\right\}.$ Note that $\mathfrak{Q}_2$ is doubly commuting and $[Q_2, Q_2^*]1\neq0,$ but $\text{ker}(Q_{1})=\mathfrak{Q}_{2}$.\eoex 
\end{example}

In what follows, we characterize the $\varphi$-doubly commuting quotient modules of $\HD$ among all doubly commuting quotient modules of $\HD$. We begin with an example of a doubly commuting quotient module which is not $\varphi$-doubly commuting.

\begin{example}\label{Exam not phi DC}
Consider the quotient module $\mathfrak{Q}=\HD\ominus f_{a}(z)\HD$ for some $a\in\mathbb{D}$ and $a\neq0$. By \cite[Theorem 2.1]{INS2004}, it follows that $\mathfrak{Q}$ is doubly commuting. Note that $k_{a} \in\mathfrak{Q}$ (see \eqref{R-Kernel}). By \cite[Lemma 12]{CIL2018}, it follows that $wk_{a }\in\mathfrak{Q}$. Now,
$$[Q_{1}Q_{2},Q_{2}^*]k_{a}\overset{\eqref{Doubly-Commute}}{=}(Q_1Q_2Q_2^*-Q_1Q_2^*Q_2)k_a=-Q_1Q_2^*Q_2k_a=-Q_1k_{a},$$ 
which is nonzero since $a \neq 0.$ Hence, $\mathfrak{Q}$ is not $\varphi$-doubly commuting.\eoex 
\end{example}

\begin{theorem} \label{DC+varDC}
Let $\mathfrak{Q}=\mathfrak{Q}_{1}\otimes\mathfrak{Q}_{2}$ be a nontrivial doubly commuting quotient module of $\HD.$ Then $\mathfrak{Q}$ is $\varphi$-doubly commuting if and only if one of the following holds:
\begin{itemize}
\item [$\mathrm{(i)}$] $\mathfrak{Q}_1=\HDO$ and $\mathfrak{Q}_2=\HDO \ominus f_a(w)\HDO$
\item [$\mathrm{(ii)}$] $\mathfrak{Q}_1=\HDO \ominus f_0(z)\HDO$ and $\mathfrak{Q}_2=\HDO$ 
\item [$\mathrm{(iii)}$] $\mathfrak{Q}_1=(\HDO \ominus \theta_{1}(z)\HDO)$ and $\mathfrak{Q}_2=(\HDO \ominus \theta_{2}(w)\HDO),$ where either $\theta_{1}(z)=f_0(z)$ and $\theta_2$ is an arbitrary inner function, or $\theta_1$ is arbitrary and $\theta_{2}(w)=f_a(w)$ for some $a\in\mathbb{D}$.
\end{itemize}
\end{theorem}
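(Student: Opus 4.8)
The plan is to reduce the $\varphi$-doubly commuting condition to the vanishing of a single tensor factor and then read off which model spaces make that happen. Since $\mathfrak{Q}=\mathfrak{Q}_1\otimes\mathfrak{Q}_2$ is doubly commuting, \eqref{Doubly-Commute} gives $[Q_1Q_2,Q_2^*]=Q_1[Q_2,Q_2^*]$, so $\mathfrak{Q}$ is $\varphi$-doubly commuting if and only if $Q_1[Q_2,Q_2^*]=0$. By \eqref{Q_1[Q_2,Q_2*]} this operator equals $\bigl(P_{\mathfrak{Q}_1}T_z|_{\mathfrak{Q}_1}\bigr)\otimes\bigl[P_{\mathfrak{Q}_2}T_w|_{\mathfrak{Q}_2},T_w^*|_{\mathfrak{Q}_2}\bigr]$, and a tensor product of bounded operators on nonzero Hilbert spaces vanishes exactly when one of the factors vanishes. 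Hence $\mathfrak{Q}$ is $\varphi$-doubly commuting if and only if $P_{\mathfrak{Q}_1}T_z|_{\mathfrak{Q}_1}=0$ or $[P_{\mathfrak{Q}_2}T_w|_{\mathfrak{Q}_2},T_w^*|_{\mathfrak{Q}_2}]=0$.

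Next I would identify each of these two alternatives. Write $\mathfrak{Q}_1=\HDO\ominus\theta_1\HDO$ and $\mathfrak{Q}_2=\HDO\ominus\theta_2\HDO$ with $\theta_1,\theta_2$ inner (the constant case corresponding to $\mathfrak{Q}_i=\HDO$). Since $\mathfrak{Q}_1$ is coinvariant, the adjoint of $P_{\mathfrak{Q}_1}T_z|_{\mathfrak{Q}_1}$ is $T_z^*|_{\mathfrak{Q}_1}$, so $P_{\mathfrak{Q}_1}T_z|_{\mathfrak{Q}_1}=0$ iff $\mathfrak{Q}_1\subseteq\ker T_z^*=\mathbb{C}\cdot 1$; as $\mathfrak{Q}_1\neq\{0\}$ this forces $\mathfrak{Q}_1=\mathbb{C}\cdot 1$, equivalently $\theta_1\HDO=z\HDO$, i.e.\ $\theta_1(z)=f_0(z)$. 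For the other factor, by \eqref{[Q_2,Q_2*]} the condition $[P_{\mathfrak{Q}_2}T_w|_{\mathfrak{Q}_2},T_w^*|_{\mathfrak{Q}_2}]=0$ is precisely the normality of $Q_2$, so Lemma \ref{Q2 normal} (equivalently \cite[Lemma 2.4]{DaGoSa20}) shows it holds iff $\mathfrak{Q}_2$ is one-dimensional, i.e.\ $\theta_2$ is a single Blaschke factor $\theta_2(w)=f_a(w)$ for some $a\in\mathbb{D}$.

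Finally I would assemble the conclusion: $\mathfrak{Q}$ is $\varphi$-doubly commuting iff $\theta_1=f_0$ (with $\theta_2$ an arbitrary inner function, possibly constant) or $\theta_2=f_a$ for some $a\in\mathbb{D}$ (with $\theta_1$ arbitrary). Splitting according to whether $\mathfrak{Q}_1$, resp.\ $\mathfrak{Q}_2$, equals all of $\HDO$ or is a proper model space recovers exactly the listed alternatives: $(\mathrm{ii})$ is the first case with $\mathfrak{Q}_2=\HDO$, $(\mathrm{i})$ is the second case with $\mathfrak{Q}_1=\HDO$, and $(\mathrm{iii})$ collects the remaining possibilities where both factors are proper model spaces, with the nontriviality hypothesis ruling out the degenerate overlaps. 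I do not expect a serious obstacle here: the only non-formal inputs are the two identifications in the previous paragraph — the first being the elementary computation of $\ker T_z^*$ together with $\mathfrak{Q}_1\neq\{0\}$, and the second being exactly Lemma \ref{Q2 normal} — so the main care needed is the bookkeeping that turns the disjunction into the three stated cases.
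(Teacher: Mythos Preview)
Your proposal is correct and uses essentially the same ingredients as the paper's proof: the reduction $[Q_1Q_2,Q_2^*]=Q_1[Q_2,Q_2^*]$ from \eqref{Doubly-Commute}, the tensor factorization \eqref{Q_1[Q_2,Q_2*]}, and Lemma~\ref{Q2 normal} for the second factor. The only organizational difference is that the paper first invokes \cite[Theorem~2.1]{INS2004} to split into four cases and then analyzes each separately (and in Case~(iii) identifies $\theta_1=f_0$ via \cite[Proposition~9.13]{GMR2016} together with Example~\ref{Exam not phi DC}), whereas you apply the tensor factorization once at the outset and read off the disjunction directly, identifying $\theta_1=f_0$ via the elementary observation $\ker T_z^*=\mathbb{C}\cdot 1$; your route is a bit more economical but not genuinely different.
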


\begin{proof}
We prove only the necessary part, as the sufficient part is immediate. Let $\mathfrak{Q}$ be a nontrivial doubly commuting quotient module of $\HD$. From \cite[Theorem 2.1]{INS2004} (see also, \cite[Corollary 3.3]{Sarkar14}), one of the following four cases occurs: 
\begin{itemize}
\item [$\mathrm{(i)}$]$\mathfrak{Q}=\HDO \otimes\HDO$
\item [$\mathrm{(ii)}$]$\mathfrak{Q}=\HDO \otimes (\HDO \ominus \theta_2(w)\HDO)$
\item [$\mathrm{(iii)}$]$\mathfrak{Q}=(\HDO \ominus \theta_1(z)\HDO) \otimes \HDO$
\item [$\mathrm{(iv)}$]$\mathfrak{Q}=(\HDO \ominus \theta_1(z)\HDO) \otimes (\HDO \ominus \theta_2(w)\HDO),$
\end{itemize}
where $\theta_1$ and $\theta_2$ are nonconstant inner functions on $\mathbb D.$  Note that for (i), we have
\[[Q_{1}Q_{2},Q_{2}^*](1\otimes 1)\neq0.\]
Hence $\mathfrak{Q}=\HDO \otimes\HDO$ is not $\varphi$-doubly commuting.

We now consider the remaining cases individually. Let $\mathfrak{Q}$ be $\varphi$-doubly commuting (see Definition \ref{phi-dc}).

\noindent \textsf{Case $\mathrm{(ii)}$:$ ~\mathfrak{Q}=\HDO \otimes (\HDO \ominus \theta_2(w)\HDO).$}

Then $\mathfrak{Q}$ is invariant under $M_1.$ This yields that $\operatorname{ker}(Q_1)=\{0\}.$ Since $\mathfrak{Q}$ is $\varphi$-doubly commuting, by \eqref{Doubly-Commute} and \eqref{Q_1[Q_2,Q_2*]}
\beqn Q_1[Q_2 ,Q_2^*]( 1 \otimes f)=0\mbox{ yields that } \left[P_{\mathfrak{Q}_{2}}T_{w}\big|_{\mathfrak{Q}_{2}},T_{w}^*\big|_{\mathfrak{Q}_{2}}\right]f=0 \mbox{ for all }f\in\mathfrak{Q}_2.\eeqn
Consequently, $P_{\mathfrak{Q}_{2}}T_{w}\big|_{\mathfrak{Q}_{2}}$ is normal on $\mathfrak{Q}_2$. This with \eqref{[Q_2,Q_2*]} yields that $Q_2$ is normal. Therefore, by Lemma \ref{Q2 normal}, $\HDO \ominus \theta_2(w)\HDO$ is one dimensional. Furthermore, by \cite[Propositions 5.19 and 5.16]{GMR2016}, it follows that $\theta_{2}=f_a$ for some $a\in\mathbb{D}$ (see \eqref{automorphismdisc}).

\noindent \textsf{Case $\mathrm{(iii)}$:~$\mathfrak{Q}=(\HDO \ominus \theta_1(z)\HDO) \otimes \HDO.$}

Then $\mathfrak{Q}$ is invariant under $M_2.$ Thus, using \eqref{[Q_2,Q_2*]}, we get
\begin{align*}[Q_{2},Q_{2}^*] =I_{\mathfrak{Q}_{1}}\otimes\left[T_{w},T_{w}^*\right].
\end{align*}
It follows that for any $f\otimes 1\in \mathfrak{Q},$ we have
\begin{align}\label{DC+varDC-R22}
Q_1[Q_2 ,Q_2^*](f\otimes 1)=Q_1(I_{\mathfrak{Q}_{1}}\otimes\left[{T_{w},T^*_{w}}\right])(f\otimes 1)=Q_{1}(f\otimes-1).
\end{align}
Since $\mathfrak{Q}$ is $\varphi$-doubly commuting, it follows from \eqref{DC+varDC-R22} that $P_{\mathfrak{Q}_{1}}T_{z}\big|_{\mathfrak{Q}_{1}} $ is a zero operator on $\HDO\ominus\theta_1(z)\HDO.$ This with \cite[Proposition 9.13]{GMR2016} yields that $\HDO\ominus\theta_{1}(z)\HDO$ is at most one dimensional. Since $\theta_1$ is nonconstant, it follows from \cite[Proposition 5.19]{GMR2016} that $\theta_{1}=f_a$ for some $a\in\mathbb{D}$. Note that, if $a\neq0$, then Example \ref{Exam not phi DC} shows that $\mathfrak{Q}$ is not $\varphi$-doubly commuting. Hence $\theta_{1}=f_0.$

\noindent \textsf{Case $\mathrm{(iv)}$:} $\mathfrak{Q}=(\HDO \ominus \theta_1(z)\HDO) \otimes (\HDO \ominus \theta_2(w)\HDO).$\\ 
Now, the $\varphi$-doubly commutativity of $\mathfrak{Q}$ and \eqref{Q_1[Q_2,Q_2*]} implies that for each $f\otimes g\in\mathfrak{Q}_{1}\otimes\mathfrak{Q}_{2},$ 
$$P_{\mathfrak{Q}_{1}}T_{z}f=0\quad\mbox{or}\quad\left[P_{\mathfrak{Q}_{2}}T_{w}\big|_{\mathfrak{Q}_{2}},T_{w}^*\big|_{\mathfrak{Q}_{2}}\right]g=0.$$
If $\left[P_{\mathfrak{Q}_{2}}T_{w}\big|_{\mathfrak{Q}_{2}},T_{w}^*\big|_{\mathfrak{Q}_{2}}\right]=0\quad\text{on}~\mathfrak{Q}_{2},$ then it follows from Lemma \ref{Q2 normal} that $\theta_{2}=f_a$ for some $a\in\mathbb{D}$. In this case, $\theta_{1}$ can be any inner function. We now consider the situation when there exists $\tilde g \in \mathfrak{Q}_2$ such that $\left[P_{\mathfrak{Q}_{2}}T_{w}\big|_{\mathfrak{Q}_{2}},T_{w}^*\big|_{\mathfrak{Q}_{2}}\right]\tilde g\neq0.$ Then $$Q_{1}[Q_{2},Q_{2}^*](f\otimes \tilde g)=P_{\mathfrak{Q}_{1}}T_{z}f\otimes\left[P_{\mathfrak{Q}_{2}}T_{w}\big|_{\mathfrak{Q}_{2}},T_{w}^*\big|_{\mathfrak{Q}_{2}}\right]\tilde g=0$$
implies that $P_{\mathfrak{Q}_{1}}T_{z}\big|_{\mathfrak{Q}_{1}}$ is a zero operator on $\mathfrak{Q}_{1}=\HDO \ominus \theta_1(z)\HDO$. Now, by following the arguments used in \textsf{Case $\mathrm{(iii)}$}, we obtain $\theta_{1}=f_{0}.$ In this case, the inner function $\theta_{2}$ is arbitrary. This completes the proof.
\end{proof}

Recall that an inner function on $\mathbb D$ is called singular if it has no zeros in $\mathbb{D}$. The following is an application of Theorem \ref{DC+varDC}.

\begin{corollary}\label{keycoro1}
Let $S_1$ and $S_2$ be singular inner functions on $\mathbb{D} $.  Then $\mathfrak{Q}=\HD \ominus S_1(z)S_2(w)\HD$ is $\varphi$-doubly commuting if and only if $S_1$ and $S_2$ both are constant inner functions.
\end{corollary}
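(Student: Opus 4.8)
The plan is to establish the nontrivial implication by a short case analysis; the point is that $\mathfrak{Q}=\HD\ominus S_1(z)S_2(w)\HD$ is a tensor‑product quotient module --- hence doubly commuting and covered by Theorem~\ref{DC+varDC} --- precisely when at least one of $S_1,S_2$ is constant, whereas if both are nonconstant $\mathfrak{Q}$ is not doubly commuting and a direct computation with a well‑chosen test vector is needed. If $S_1$ and $S_2$ are both constant, then $S_1(z)S_2(w)$ is a unimodular constant, so $S_1(z)S_2(w)\HD=\HD$, $\mathfrak{Q}=\{0\}$, and $\mathfrak{Q}$ is vacuously $\varphi$-doubly commuting. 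For the converse I would use the identification $\HD\cong\HDO\otimes\HDO$ (with $z^mw^n\mapsto z^m\otimes w^n$) to write $S_1(z)S_2(w)\HD=S_1\HDO\otimes S_2\HDO$, which yields the orthogonal decomposition
\[
\mathfrak{Q}=\big(\mathcal{K}_1\otimes\HDO\big)\oplus\big(S_1\HDO\otimes\mathcal{K}_2\big),\qquad \mathcal{K}_i:=\HDO\ominus S_i\HDO .
\]
Assuming at least one $S_i$ is nonconstant, I split into two cases.

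\emph{Case $1$: $S_1$ nonconstant.} Since $S_1$ is singular, $S_1(0)\ne 0$ and $\mathcal{K}_1\ne\{0\}$; set $a_0=S_1(0)$, $a_1=S_1'(0)$. The vector $e:=1-\overline{a_0}\,S_1$ (the reproducing kernel of $\mathcal{K}_1$ at $0$) is a nonzero element of $\mathcal{K}_1$, so $e\otimes 1\in\mathcal{K}_1\otimes\HDO\subseteq\mathfrak{Q}$. Using $Q_2^*=(I\otimes T_w^*)|_{\mathfrak{Q}}$ and $Q_1Q_2=P_{\mathfrak{Q}}(T_z\otimes T_w)|_{\mathfrak{Q}}$, and projecting $ze\otimes w$ onto $\mathfrak{Q}$ along the four summands of $\HDO\otimes\HDO$ coming from $\HDO=\mathcal{K}_i\oplus S_i\HDO$ (together with $M_{S_1}^{*}z=\overline{a_1}+\overline{a_0}z$, $M_{S_1}^{*}(zS_1)=z$, where $M_{S_1}$ is multiplication by $S_1$ on $\HDO$, and $T_w^*w=1$), one gets $Q_2^*(e\otimes 1)=0$ and
\[
[Q_1Q_2,Q_2^*](e\otimes 1)=-\big(z-\overline{a_1}\,S_1-\overline{a_0}\,zS_1\big)\otimes 1-\overline{a_1}\,S_1\otimes v,\qquad v:=T_w^*P_{\mathcal{K}_2}w .
\]
The crucial scalar is $v(0)=\langle P_{\mathcal{K}_2}w,w\rangle=\|P_{\mathcal{K}_2}w\|^2$; since $M_{S_2}^{*}w=\overline{S_2'(0)}+\overline{S_2(0)}w\ne 0$ (because $S_2(0)\ne 0$, as $S_2$ is singular), we have $w\notin\mathcal{K}_2$, hence $v(0)<\|w\|^2=1$. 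If $[Q_1Q_2,Q_2^*](e\otimes 1)=0$, comparing components in the orthogonal subspaces $\HDO\otimes\mathbb{C}1$ and $\HDO\otimes w\HDO$ (note $v-v(0)\in w\HDO$) forces $z-\overline{a_0}\,zS_1+\overline{a_1}\big(v(0)-1\big)S_1=0$; evaluating at $z=0$ gives $\overline{a_1}(v(0)-1)a_0=0$, so $a_1=0$, and the identity collapses to $z\big(1-\overline{a_0}S_1\big)=0$, i.e.\ $S_1\equiv 1/\overline{a_0}$ is constant --- a contradiction. Hence $\mathfrak{Q}$ is not $\varphi$-doubly commuting.

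\emph{Case $2$: $S_1$ constant, $S_2$ nonconstant.} Then $\mathfrak{Q}=\HDO\otimes\mathcal{K}_2$ is a genuine tensor‑product quotient module, so by \eqref{Q_1Q_2} $Q_1=T_z\otimes I$ and $Q_2=I\otimes N$ with $N=P_{\mathcal{K}_2}T_w|_{\mathcal{K}_2}$, whence $[Q_1Q_2,Q_2^*]=T_z\otimes[N,N^*]$. As $S_2$ is nonconstant singular, $\mathcal{K}_2$ is infinite‑dimensional, so $[N,N^*]\ne 0$ by \cite[Lemma~2.4]{DaGoSa20} (equivalently Lemma~\ref{Q2 normal}); hence $[Q_1Q_2,Q_2^*]\ne 0$. (Alternatively, here $\mathfrak{Q}$ is doubly commuting and is of none of the forms listed in Theorem~\ref{DC+varDC}, so it cannot be $\varphi$-doubly commuting.) This exhausts the cases and proves the corollary.

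The main obstacle is Case $1$: carrying out the compression computation cleanly and, above all, isolating the scalar $v(0)$ and verifying $v(0)\ne 1$ --- this is exactly the place where the hypothesis that $S_1,S_2$ are singular (so $S_i(0)\ne 0$) is used. A secondary point is recognizing that when both $S_i$ are nonconstant $\mathfrak{Q}$ fails to be doubly commuting, so Theorem~\ref{DC+varDC} is not directly available and the hands‑on argument of Case $1$ is genuinely needed.
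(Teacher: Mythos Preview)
Your argument is correct and follows the same two-case outline as the paper's proof. Case~2 ($S_1$ constant, $S_2$ nonconstant) is handled identically, by invoking Theorem~\ref{DC+varDC}. In Case~1 ($S_1$ nonconstant) the paper likewise picks a test vector lying in $(\HDO\ominus S_1\HDO)\otimes\mathbb{C}1\subset\mathfrak{Q}$, namely $f=Q_1^*(S_1(z)-S_1(0))$, observes $Q_2^*f=0$, and then argues in two lines that $Q_2^*Q_1Q_2f=Q_2^*Q_2P_{\mathfrak{Q}}(S_1(z)-S_1(0))=S_1(z)-S_1(0)\ne0$. Your choice of the reproducing-kernel vector $e=1-\overline{S_1(0)}\,S_1$ works just as well but forces you through the four-fold tensor decomposition and the auxiliary scalar $v(0)=\|P_{\mathcal{K}_2}w\|^2<1$. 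The paper's route is more economical; yours has the virtue of making fully explicit where the singularity hypothesis $S_2(0)\ne0$ enters the estimate.
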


\begin{proof}
Let $\mathfrak{Q}$ be $\varphi$-doubly commuting. If possible, let $S_1$ be a nonconstant inner function. Consider the function 
$f(z,w)=Q_1^*(S_1(z)-S_{1}(0)).$
Since $S_1$ is nonconstant, $f\neq 0.$ Moreover, $f \in \mathfrak{Q}.$ Note that $Q_1Q_2Q_2^*f(z,w)=0.$ On the other hand,
\beqn
Q_2^*Q_2Q_1f(z,w)=Q_2^*Q_2P_{\mathfrak{Q}}\left[S_1(z)-S_1(0)\right]
=S_1(z)-S_1(0) \neq 0,
\eeqn
since $S_1$ is nonconstant. We arrive at a contradiction (see Definition \ref{phi-dc}).

Now, assume if possible that $S_1$ is constant and $S_2$ is nonconstant. Then $\mathfrak{Q} = \HD \ominus S_2(w)\HD$ is doubly commuting, which is not $\varphi$-doubly commuting by Theorem \ref{DC+varDC}. Hence $S_1$ and $S_2$ are constant inner functions.
\end{proof}

The following example illustrates that not all $\varphi$-doubly commuting quotient modules of $\HD$ are doubly commuting.

\begin{example}
Let $\mathfrak{S}=\overline{\text{span}}\left\{z^{i}w^{j}:i, j\ge 0,i+j\ge 2\right\}$.  Then $\mathfrak{S}$ is a submodule of $\HD$. Moreover, the corresponding quotient module $\mathfrak{Q}$ is $\overline{\text{span}}\{1,z,w\}$. It is easy to see that $\mathfrak{Q}$ is $\varphi$-doubly commuting. However, $\mathfrak{Q}$ is not doubly commuting.\eoex
\end{example}

The following result characterizes $\varphi$-doubly commuting quotient modules of the form $\HD\ominus\theta_1(z)\theta_2(w)\HD,$ where $\theta_1$ and $\theta_2$ are inner functions on $\mathbb D.$ These quotient modules are not doubly commuting (see \cite[Theorem 2.1]{INS2004}) when $\theta_1$ and $\theta_2$ are nonconstant inner functions.

\begin{theorem}\label{finalcharasepa}
For inner functions $\theta_{1}$ and $\theta_{2}$ on $\mathbb{D},$ the quotient module $\mathfrak{Q}=\HD\ominus \theta_1(z)\theta_2(w)\HD$ is $\varphi$-doubly commuting if and only if one of the following holds:
\begin{itemize}
\item [$\rm{(i)}$] $\theta_{1}$ and $\theta_{2}$ are constant functions.
\item [$\rm{(ii)}$] $\theta_1$ is constant and $\theta_2=f_b$ for some $b \in \mathbb D.$
\item [$\rm{(iii)}$] $\theta_2$ is constant and $\theta_1(z)=c_1z$ for some $c_1 \in \mathbb{T}.$ 
\item [$\rm{(iv)}$] $\theta_1(z)=c_1z$ and $\theta_2(w)=c_2w$ for some $c_1, c_2\in \mathbb{T}.$
\end{itemize}
\end{theorem}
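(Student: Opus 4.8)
The plan is to split into cases according to how many of $\theta_{1},\theta_{2}$ are constant: when at most one is nonconstant the statement reduces to Theorem \ref{DC+varDC}, and when both are nonconstant it is settled by an explicit computation of $[Q_{1}Q_{2},Q_{2}^{*}]$. The computation rests on a structural observation: since $M_{\theta_{1}(z)}=M_{\theta_{1}}\otimes I$ and $M_{\theta_{2}(w)}=I\otimes M_{\theta_{2}}$ are isometries on $\HDO\otimes\HDO\cong\HD$ with closed ranges, one has $\theta_{1}(z)\theta_{2}(w)\HD=(\theta_{1}\HDO)\otimes(\theta_{2}\HDO)$. Writing $K_{\theta_{i}}=\HDO\ominus\theta_{i}\HDO$, letting $P_{i}$ be the orthogonal projection of $\HDO$ onto $\theta_{i}\HDO$, and expanding $\HD=(\theta_{1}\HDO\oplus K_{\theta_{1}})\otimes(\theta_{2}\HDO\oplus K_{\theta_{2}})$ against $\mathfrak{Q}^{\perp}=\theta_{1}\HDO\otimes\theta_{2}\HDO$, one obtains the orthogonal decomposition
\[
\mathfrak{Q}=\big(\HDO\otimes K_{\theta_{2}}\big)\oplus\big(K_{\theta_{1}}\otimes\theta_{2}\HDO\big),\qquad P_{\mathfrak{Q}}=I-P_{1}\otimes P_{2},
\]
with $Q_{1}Q_{2}=P_{\mathfrak{Q}}(T_{z}\otimes T_{w})|_{\mathfrak{Q}}$ and $Q_{2}^{*}=(I\otimes T_{w}^{*})|_{\mathfrak{Q}}$; the two summands are mutually orthogonal since $\theta_{2}\HDO\perp K_{\theta_{2}}$.

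For sufficiency, in cases (i)--(iii) the space $\mathfrak{Q}$ is $\{0\}$ or a tensor product $\mathfrak{Q}_{1}\otimes\mathfrak{Q}_{2}$ of a full factor $\HDO$ with a one-variable model space, hence doubly commuting, and $\varphi$-double commutativity follows from Theorem \ref{DC+varDC}(i)--(ii) (alternatively, one checks directly that $Q_{2}$ is a scalar multiple of the identity in case (ii) and $Q_{1}=0$ in case (iii)). In case (iv), $\theta_{1}(z)\theta_{2}(w)\HD=zw\,\HD=\mathfrak{Q}^{\perp}$, so $M_{zw}$ maps $\HD$ into $\mathfrak{Q}^{\perp}$ and hence $Q_{1}Q_{2}=P_{\mathfrak{Q}}M_{zw}|_{\mathfrak{Q}}=0$, which makes $[Q_{1}Q_{2},Q_{2}^{*}]=0$ trivial.

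For necessity, suppose $\mathfrak{Q}$ is $\varphi$-doubly commuting. If both $\theta_{i}$ are constant we are in (i); if exactly one is constant, $\mathfrak{Q}$ is a nontrivial doubly commuting tensor-product quotient module and Theorem \ref{DC+varDC} forces (ii) or (iii). It remains to prove that $\theta_{1},\theta_{2}$ both nonconstant forces $\theta_{1}(z)=c_{1}z$ and $\theta_{2}(w)=c_{2}w$. Using $T_{w}T_{w}^{*}=I-E_{0}$ with $E_{0}f=f(0)$, a direct computation yields, for $x\in\mathfrak{Q}$,
\[
[Q_{1}Q_{2},Q_{2}^{*}]\,x=-(T_{z}\otimes E_{0})\,x+\big(P_{1}T_{z}\otimes D\big)\,x,\qquad D:=T_{w}^{*}P_{2}T_{w}-P_{2}T_{w}T_{w}^{*},
\]
where on $K_{\theta_{2}}$ one has $Dg=(I-S_{\theta_{2}}^{*}S_{\theta_{2}})g+g(0)\,\overline{\theta_{2}(0)}\,\theta_{2}$, with $S_{\theta_{2}}$ the compressed shift on $K_{\theta_{2}}$ and $I-S_{\theta_{2}}^{*}S_{\theta_{2}}$ the orthogonal projection of $K_{\theta_{2}}$ onto $\mathbb{C}\,\widetilde{\theta}_{2}$, $\widetilde{\theta}_{2}(w)=(\theta_{2}(w)-\theta_{2}(0))/w$. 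Step 1: testing on $f\otimes g\in K_{\theta_{1}}\otimes\theta_{2}\HDO$ with $f\neq0$, $g(0)\neq0$ and using $1=\overline{\theta_{2}(0)}\theta_{2}+k_{0}$ with $k_{0}=1-\overline{\theta_{2}(0)}\theta_{2}\in K_{\theta_{2}}$, the $\HDO\otimes K_{\theta_{2}}$-component of $[Q_{1}Q_{2},Q_{2}^{*}]\,x$ equals $-g(0)\,zf\otimes k_{0}\neq0$ (note $k_{0}\neq0$ since $|\theta_{2}(0)|<1$); hence $\varphi$-double commutativity forces $\theta_{2}(0)=0$. Step 2: with $\theta_{2}(0)=0$ we have $1\in K_{\theta_{2}}$, and testing on $f\otimes1$ for all $f\in\HDO$ and comparing the components along $\widetilde{\theta}_{2}$ and along its orthocomplement in $K_{\theta_{2}}$ forces $1\parallel\widetilde{\theta}_{2}$, i.e. $\theta_{2}(w)=c_{2}w$; substituting this back, the identity collapses to $zf\in\theta_{1}\HDO$ for every $f\in\HDO$, whence $\theta_{1}(z)=c_{1}z$. (The subcase in which both $\theta_{i}$ are singular is also ruled out at once by Corollary \ref{keycoro1}.)

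The main obstacle is the necessity argument when both $\theta_{i}$ are nonconstant. Besides deriving the commutator identity above, one must keep precise track of which summand of $\mathfrak{Q}=(\HDO\otimes K_{\theta_{2}})\oplus(K_{\theta_{1}}\otimes\theta_{2}\HDO)$ each term of $[Q_{1}Q_{2},Q_{2}^{*}]\,x$ lies in; it is exactly the orthogonality of these two summands, together with the rank-one structure of $D|_{K_{\theta_{2}}}$, that allows one to isolate the two scalar conditions $\theta_{2}(0)=0$ and $\widetilde{\theta}_{2}\parallel1$ that pin down $\theta_{1}$ and $\theta_{2}$.
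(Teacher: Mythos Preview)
Your proof is correct and takes a genuinely different route from the paper. The paper's proof of Theorem \ref{finalcharasepa} proceeds by an extensive case analysis on the inner factorizations of $\theta_{1}$ and $\theta_{2}$: it splits according to whether each $\theta_{i}$ is singular or has a zero, and then invokes Corollary~\ref{keycoro1}, Lemmas~\ref{Keylemma1}--\ref{keylemma3}, and Theorem~\ref{maincharasepa}, each of which is itself a multi-case computation with carefully chosen test vectors of the form $k_{a}\theta_{3}$, $m_{b}S_{1}\theta_{3}$, etc. Your argument replaces all of this by a single structural computation: using $P_{\mathfrak{Q}}=I-P_{1}\otimes P_{2}$ and $Q_{2}^{*}=(I\otimes T_{w}^{*})|_{\mathfrak{Q}}$, you obtain the closed formula $[Q_{1}Q_{2},Q_{2}^{*}]=-(T_{z}\otimes E_{0})+(P_{1}T_{z}\otimes D)$ on $\mathfrak{Q}$, and then exploit the orthogonal splitting $\mathfrak{Q}=(\HDO\otimes K_{\theta_{2}})\oplus(K_{\theta_{1}}\otimes\theta_{2}\HDO)$ together with the rank-one defect of the compressed shift to read off $\theta_{2}(0)=0$, then $\widetilde{\theta}_{2}\parallel 1$, then $zH^{2}\subseteq\theta_{1}H^{2}$. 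What your approach buys is a uniform, coordinate-free treatment of the ``both nonconstant'' case that avoids the singular/Blaschke dichotomy entirely; what the paper's approach buys is that each individual step is an explicit inner-product calculation requiring no model-space apparatus (defect operators, conjugate kernels $\widetilde{\theta}$), so it is more self-contained for a reader unfamiliar with $K_{\theta}$ theory.
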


We begin by proving a couple of technical lemmas that will play a key role in the characterization.

\begin{lemma}\label{Keylemma1}
Consider the quotient module $\mathfrak{Q}=\HD \ominus \theta_1(z)\theta_2(w)\HD$, where $\theta_1$ and $\theta_2$ are inner functions on $\mathbb{D}$ such that $\theta_1(a)=0$ for some $a\in\mathbb{D},$ with $a\neq0$. Then, $\mathfrak{Q}$ is not $\varphi$-doubly commuting.
\end{lemma}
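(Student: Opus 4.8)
The idea is to produce an explicit vector in $\mathfrak{Q}$ that witnesses the failure of $\varphi$-double commutativity, exactly as in Example \ref{Exam not phi DC}, but now accounting for the extra tensor factor $\theta_2(w)$. Since $\theta_1(a)=0$ with $a\neq 0$, the reproducing kernel $k_a(z,w)=\tfrac{1}{1-\bar a z}$ (see \eqref{R-Kernel}) lies in $\HDO\ominus\theta_1(z)\HDO$ as a function of $z$, hence $k_a\in\mathfrak{Q}$. First I would check that $w k_a\in\mathfrak{Q}$ as well: writing $\mathfrak{Q}=\HD\ominus\theta_1(z)\theta_2(w)\HD$, a function $\theta_1(z)\theta_2(w)h(z,w)$ has, in its power series in $w$, only terms that are multiples of $\theta_1(z)$ times the Taylor coefficients of $\theta_2(w)h$; since $k_a$ is orthogonal to $\theta_1(z)H^2$ in the $z$-variable, so is $w k_a$, and it follows that $wk_a\perp\theta_1(z)\theta_2(w)\HD$, i.e. $wk_a\in\mathfrak{Q}$. (This is the analogue of the use of \cite[Lemma 12]{CIL2018} in Example \ref{Exam not phi DC}.)

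\textbf{Key computation.} With $k_a\in\mathfrak{Q}$ in hand, I would compute $[Q_1Q_2,Q_2^*]k_a$. Since $wk_a\in\mathfrak{Q}$, we have $Q_2 k_a = wk_a$, and then $Q_2^* Q_2 k_a = P_{\mathfrak{Q}}(\bar w\cdot wk_a)=P_{\mathfrak{Q}}k_a=k_a$; on the other hand $Q_2^* k_a=P_{\mathfrak{Q}}(\bar w k_a)=0$ because $\bar w k_a$ involves only negative powers of $w$, so $Q_2 Q_2^* k_a=0$. Therefore
\begin{align*}
[Q_1Q_2,Q_2^*]k_a = Q_1Q_2Q_2^*k_a - Q_2^*Q_1Q_2 k_a = -Q_2^*Q_1(wk_a).
\end{align*}
Here I must be a little careful since $\mathfrak{Q}$ need not be a tensor product, so I cannot simply quote \eqref{Doubly-Commute}; instead I compute directly. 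Now $Q_1(wk_a)=P_{\mathfrak{Q}}(zwk_a)$. Since $zk_a(z)=\tfrac{z}{1-\bar a z}=\tfrac1{\bar a}\big(k_a-1\big)$, we get $zwk_a=\tfrac1{\bar a}(wk_a-w)$, and as both $wk_a$ and $w$ lie in $\mathfrak{Q}$ (the latter because $\theta_1$ is nonconstant, indeed $\theta_1(z)\theta_2(w)\HD$ has no nonzero element of the form $w\cdot(\text{const})$ unless $\theta_1$ is constant), this equals $\tfrac1{\bar a}(wk_a-w)$. Applying $Q_2^*=P_{\mathfrak{Q}}M_w^*$ gives $Q_2^*Q_1(wk_a)=\tfrac1{\bar a}P_{\mathfrak{Q}}(k_a-1)=\tfrac1{\bar a}(k_a-1)$, which is nonzero since $a\neq 0$ forces $k_a$ nonconstant. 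Hence $[Q_1Q_2,Q_2^*]k_a\neq 0$, so $\mathfrak{Q}$ is not $\varphi$-doubly commuting.

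\textbf{Main obstacle.} The one point that needs genuine care is the membership claims $wk_a\in\mathfrak{Q}$ and $w\in\mathfrak{Q}$ — that is, verifying orthogonality to the \emph{full} submodule $\theta_1(z)\theta_2(w)\HD$ rather than to a tensor-product submodule. The cleanest way is to observe that $\theta_1(z)\theta_2(w)\HD\subseteq\theta_1(z)\HD$, so it suffices to show $wk_a$ and $w$ are orthogonal to $\theta_1(z)\HD=\big(\HDO\ominus\theta_1\HDO\big)^{\perp}\otimes\HDO$ decomposed over the $z$-variable; since $k_a$ and $1$ both lie in $\HDO\ominus\theta_1\HDO$ (using $\theta_1(a)=0$ and $\theta_1$ nonconstant, respectively), multiplying by $w$ keeps them in the $z$-orthocomplement, giving the claim. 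I would also double-check the identity $zk_a=\tfrac1{\bar a}(k_a-1)$ and that $a\neq0$ is used exactly where division by $\bar a$ occurs and where $k_a-1\neq 0$ is asserted — this is precisely the hypothesis that makes the argument work and explains why the $a=0$ case (covered separately in case (iii) of Theorem \ref{finalcharasepa}) must be excluded.
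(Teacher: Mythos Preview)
There is a genuine gap: the claim that $w\in\mathfrak{Q}$ is false in general. Your justification---that $\theta_1(z)\theta_2(w)\HD$ contains no nonzero constant multiple of $w$---only shows $w\notin\mathfrak{S}$, not $w\in\mathfrak{S}^\perp=\mathfrak{Q}$. For a concrete counterexample take $\theta_1=f_a$ and $\theta_2=f_b$ (with $c=1$ in \eqref{automorphismdisc}) for some $a,b\in\mathbb{D}\setminus\{0\}$: then $\langle w,\theta_1\theta_2\rangle_{\HD}=\overline{\theta_1(0)}\cdot\overline{(1-|b|^2)}=-\bar a(1-|b|^2)\neq 0$, so $w\not\perp\mathfrak{S}$. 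The same confusion underlies your assertion that $1\in\HDO\ominus\theta_1\HDO$ because ``$\theta_1$ is nonconstant'': in fact $1$ lies in that model space if and only if $\theta_1(0)=0$, which is not implied by $\theta_1(a)=0$ with $a\neq 0$. Consequently the identity $Q_1(wk_a)=\tfrac1{\bar a}(wk_a-w)$ is wrong; the correct expression is $\tfrac1{\bar a}(wk_a-P_{\mathfrak{Q}}w)$, with $P_{\mathfrak{Q}}w\neq w$ in general.

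That said, your strategy of testing on the single vector $k_a$ does work once the computation is corrected, and it is more economical than the paper's approach. Writing $\theta_1=f_a\theta_3$ and computing $P_{\mathfrak{S}}(zwk_a)=\overline{\theta_3(0)}\,\theta_1(z)\theta_2(w)\,P_{\HDO}(\overline{\theta_2}w)$, one obtains $[Q_1Q_2,Q_2^*]k_a=-zk_a+\overline{\theta_3(0)}\,\theta_1(z)\,h(w)$ for a certain $h\in\HDO$; since $zk_a$ is a nonzero function of $z$ alone whose modulus on $\mathbb{T}$ is nonconstant (here $a\neq 0$ is used), it cannot equal a scalar multiple of the inner function $\theta_1(z)$ times any function of $w$, and the commutator never vanishes. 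By contrast, the paper's proof splits into three cases on $\theta_2$ (constant; $\theta_2(0)\neq 0$; $\theta_2(w)=w^n\theta_4(w)$ with $\theta_4(0)\neq 0$) and uses a different test vector built from $k_a\theta_3$ in each.
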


\begin{proof}
Since $\theta_1(a)=0,$ there exists an inner function $\theta_3$ on $\mathbb  D$ such that $\theta_1=f_a\theta_3$ (see \eqref{automorphismdisc}). We now present the proof by considering separate cases on $\theta_2.$

\noindent\textsf{Case (i): Assume that $\theta_2$ is a constant function.} 

Consider the function $f(z, w)=k_a(z, w)\theta_3(z)$ (see \eqref{R-Kernel}). It is easy to see that $f \in \mathfrak{Q}.$ Note that $Q_1Q_2Q_2^*f(z,w)=0.$ Whereas
\beqn
Q_2^*Q_1Q_2f(z,w)=Q_2^*Q_1Q_2\left( k_a(z, w)\theta_3(z))\right) 
&=&Q_2^*P_{\mathfrak{Q}}\left( zwk_a(z, w)\theta_3(z)\right)
\\&=&ak_a(z, w)\theta_3(z)) \neq 0,
\eeqn
since $a \neq 0.$ Hence $\mathfrak{Q}$ is not $\varphi$-doubly commuting.

\noindent\textsf{Case (ii): $\theta_2$ is nonconstant and $\theta_2(0)\neq 0.$}

Consider the function $f(z, w)=k_a(z, w)\theta_3(z)\theta_2(w)$. It is evident that $f \in \mathfrak{Q}.$ Now 
\beqn
Q_2^*Q_1Q_2f(z,w)&=&Q_2^*Q_1Q_2\left( k_a(z, w)\theta_3(z)\theta_2(w)\right)
\\&=&Q_2^*P_{\mathfrak{Q}}\left( zwk_a(z, w)\theta_3(z)\theta_2(w)\right)
\\&=&ak_a(z, w)\theta_3(z)\theta_2(w).
\eeqn  
On the other hand,
\beqn
Q_1Q_2Q_2^*f(z,w)&=&Q_1Q_2Q_2^*\left( k_a(z, w)\theta_3(z)\theta_2(w)\right) 
\\&=&Q_1\left( k_a(z, w)\theta_3(z)\theta_2(w)-k_a(z, w)\theta_3(z)\theta_2(0)\right) 
\\&=&P_{\mathfrak{Q}}\left( zk_a(z, w)\theta_3(z)\theta_2(w)-zk_a(z, w)\theta_3(z)\theta_2(0)\right) 
\\&=& ak_a(z, w)\theta_3(z)\theta_2(w)-P_{\mathfrak{Q}}\left(zk_a(z, w)\theta_3(z)\theta_2(0)\right). 
\eeqn 
Thus 
\beqn
Q_2^*Q_1Q_2f(z,w)-Q_1Q_2Q_2^*f(z,w)=\theta_2(0)P_{\mathfrak{Q}}\left(zk_a(z, w)\theta_3(z)\right). 
\eeqn
Since $\theta_2(0)\neq 0$ and $zk_a(z, w)\theta_3(z) \notin \theta_1(z)\theta_2(w)\HD,$ the quotient module $\mathfrak{Q}$ is not $\varphi$-doubly commuting.

\noindent\textsf{Case (iii): $\theta_2$ is nonconstant and there exist $n \in \mathbb N$ and an inner function $\theta_4$ on $\mathbb D$ such that $\theta_2(w)=w^n\theta_4(w)$ and $\theta_4(0)\neq 0.$}  

It is easy to see that the functions $k_{a}(z,w)\theta_{3}(z)\theta_4(w), wk_{a}(z,w)\theta_{3}(z)\theta_4(w)$ are in $\mathfrak{Q}.$ Now,
\beq \label{n=1nge2cases}
Q_{2}^*Q_{1}Q_{2}k_{a}(z,w)\theta_{3}(z)\theta_4(w)=\begin{cases} 
ak_{a}(z,w)\theta_{3}(z)\theta_4(w) & \text{if } n=1, \\
zk_{a}(z,w)\theta_{3}(z)\theta_4(w) & \text{if } n\ge 2. 
\end{cases}
\eeq
Now 
\begin{align*}
Q_{1}Q_{2}Q_{2}^*k_{a}(z,w)\theta_{3}(z)\theta_4(w)&=Q_{1}P_{\mathfrak{Q}}\left(k_{a}(z,w)\theta_{3}(z)\theta_4(w)-k_{a}(z,w)\theta_{3}(z)\theta_4(0)\right)\\
&=zk_{a}(z,w)\theta_{3}(z)\theta_4(w)-zk_{a}(z,w)\theta_{3}(z)\theta_4(0)
\end{align*}
This, together with \eqref{n=1nge2cases} and the fact that $\theta_4(0)\neq 0$, yields that $\mathfrak{Q}$ is not $\varphi$-doubly commuting. This completes the proof.
\end{proof}

\begin{lemma} \label{keylemma2}
Let $\theta_1$ and $\theta_2$ be inner functions on $\mathbb D$ such that $\theta_1(z) = z^n S_1(z)$, where $S_1(z)$ is a singular inner function and $n \in \mathbb N.$ In addition, for $n = 1$, we assume that $S_1(z)$ is nonconstant. Then, $\mathfrak{Q}=\HD \ominus \theta_1(z)\theta_2(w)\HD$ is not $\varphi$-doubly commuting.
\end{lemma}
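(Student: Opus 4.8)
The plan is to show directly that $\mathfrak{Q}$ violates Definition \ref{phi-dc}, by exhibiting a nonzero $f\in\mathfrak{Q}$ with $Q_1Q_2Q_2^*f\neq Q_2^*Q_1Q_2f$. The tools are: $Q_2^*=M_2^*|_{\mathfrak{Q}}$ (because $\mathfrak{Q}$ is backward-shift invariant); the tensor decomposition $\HD\cong H^2(\mathbb{D})\otimes H^2(\mathbb{D})$, which lets one test membership in $\mathfrak{Q}$ by computing $\langle\,\cdot\,,z^{n+i}S_1(z)w^{j}\theta_2(w)\rangle$ through the two factors and using that $M_{S_1}$ and $M_{\theta_2}$ are isometric; and the fact that a singular inner function has no zeros, so $S_1(0)\neq 0$ (and, when $S_1$ is also nonconstant, $|S_1(0)|<1$). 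I would treat the cases $n\ge 2$ and $n=1$ separately.

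For $n\ge 2$, take $f=z^{n-2}S_1(z)$, which is nonzero. Since $\langle z^{n-2}S_1,z^{n+i}S_1\rangle_{H^2(\mathbb{D})}=\langle z^{n-2},z^{n+i}\rangle=0$ for all $i\ge 0$ (the corresponding $w$-factor being irrelevant), one gets $f\in\mathfrak{Q}$, and the same computation gives $z^{n-2}S_1(z)w,\ z^{n-1}S_1(z)w\in\mathfrak{Q}$. As $f$ is independent of $w$, $Q_2^*f=0$, hence $Q_1Q_2Q_2^*f=0$. On the other hand $Q_2f=z^{n-2}S_1(z)w$, $Q_1Q_2f=z^{n-1}S_1(z)w$, and therefore $Q_2^*Q_1Q_2f=M_2^*\bigl(z^{n-1}S_1(z)w\bigr)=z^{n-1}S_1(z)\neq 0$. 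Thus $[Q_1Q_2,Q_2^*]f=-z^{n-1}S_1(z)\neq 0$ and $\mathfrak{Q}$ is not $\varphi$-doubly commuting. This works for every inner $S_1$ and every inner $\theta_2$, consistent with the hypotheses.

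For $n=1$ (so $S_1$ is nonconstant and $|S_1(0)|<1$), the analogue $z^{-1}S_1$ is not admissible, so I would instead use $f=S_1(z)-S_1(0)$, which is nonzero; membership $f\in\mathfrak{Q}$ follows since $\langle S_1-S_1(0),z^{1+i}S_1\rangle_{H^2(\mathbb{D})}=0$ for $i\ge 0$. Again $Q_2^*f=0$, so $Q_1Q_2Q_2^*f=0$. The crux is $Q_1Q_2f=P_{\mathfrak{Q}}\bigl(zw(S_1(z)-S_1(0))\bigr)$: using $\langle z(S_1-S_1(0)),z^{1+i}S_1\rangle_{H^2(\mathbb{D})}=(1-|S_1(0)|^2)\,\delta_{i,0}$, the orthogonal projection of $zw(S_1(z)-S_1(0))$ onto $zS_1(z)\theta_2(w)\HD$ equals $(1-|S_1(0)|^2)\,zS_1(z)\theta_2(w)\kappa(w)$ for an explicit polynomial $\kappa(w)$ of degree at most $1$. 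Applying $M_2^*$ then yields $Q_2^*Q_1Q_2f=z(S_1(z)-S_1(0))-(1-|S_1(0)|^2)\,zS_1(z)B(w)$ with $B=M_2^*\bigl(\theta_2(w)\kappa(w)\bigr)\in H^2(\mathbb{D})$. If this were the zero function, then, comparing the two sides in $H^2(\mathbb{D})\otimes H^2(\mathbb{D})$ and using $zS_1(z)\not\equiv 0$, the factor $B$ must be a constant $\beta$, so that $S_1(z)\bigl(1-(1-|S_1(0)|^2)\beta\bigr)=S_1(0)$; this forces $S_1$ constant if the coefficient is nonzero and $S_1(0)=0$ if it is zero, and both are excluded. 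Hence $Q_2^*Q_1Q_2f\neq 0=Q_1Q_2Q_2^*f$, so $\mathfrak{Q}$ is not $\varphi$-doubly commuting.

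The main obstacle is the $n=1$ case: unlike for $n\ge 2$ (where $Q_1Q_2f$ already lies in $\mathfrak{Q}$), here one must compute the projection onto $zS_1(z)\theta_2(w)\HD$ explicitly — this is where all the information about $\theta_2$ enters — and then extract a contradiction via the separation-of-variables step, which is precisely where both hypotheses ($S_1$ nonconstant and $S_1(0)\neq 0$) get used. The only other care needed is the routine bookkeeping of verifying, through the tensor-product inner product, that the various auxiliary functions ($z^{n-2}S_1(z)w$, $z^{n-1}S_1(z)w$, $w(S_1(z)-S_1(0))$, and so on) lie in $\mathfrak{Q}$ rather than in $zS_1(z)\theta_2(w)\HD$.
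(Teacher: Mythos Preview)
Your argument is correct and is genuinely more economical than the paper's. The paper proves the lemma by a five-way case split on the structure of $\theta_2$ (constant; $\theta_2(0)\neq 0$; $\theta_2(w)=w^m$; $\theta_2(w)=w^m\theta_3(w)$ with $\theta_3(0)\neq 0$; and the overlap with $n=1$), choosing a different test vector in each case. You instead split only on $n\ge 2$ versus $n=1$ and treat every $\theta_2$ at once.

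For $n\ge 2$, your choice $f=z^{n-2}S_1(z)$ is cleaner than any of the paper's test vectors: since $\langle z^{n-2}S_1,z^{n+i}S_1\rangle=0$ and $\langle z^{n-1}S_1,z^{n+i}S_1\rangle=0$ for $i\ge 0$, the elements $z^{n-2}S_1(z)w$ and $z^{n-1}S_1(z)w$ already lie in $\mathfrak Q$, so no projection is needed and $\theta_2$ plays no role whatsoever. The paper, by contrast, repeatedly computes $P_{\mathfrak Q}$ of various $\theta_2$-dependent functions.

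For $n=1$, your choice $f=S_1(z)-S_1(0)$ together with the explicit projection onto $zS_1(z)\theta_2(w)\HD$ (using the orthonormal basis $\{z^{1+i}S_1(z)\,w^j\theta_2(w)\}$) replaces the paper's three separate subcases by a single tensor-product computation. The only nonzero coefficients come from $i=0$ and $j\in\{0,1\}$, yielding your $\kappa(w)=\langle w,\theta_2\rangle+\overline{\theta_2(0)}\,w$; the contradiction step, forcing $B$ constant and then using $S_1$ nonconstant and $S_1(0)\neq 0$, is exactly where both hypotheses on $S_1$ are consumed, and this is more transparent than in the paper's argument. What the paper's approach buys is that each case is a short direct calculation with no projection formula needed; what your approach buys is uniformity in $\theta_2$ and a clearer view of why the hypotheses on $S_1$ matter.
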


\begin{proof} 
The proof of the lemma is carried out by dividing into cases.

\noindent\textsf{Case (i): Let $\theta_2$ be a constant inner function and $n\ge2.$}  

Consider the function $f(z,w)=S_1(z).$ It is easy to see that $f \in \mathfrak{Q}$ and $Q_1Q_2 Q_2^* f=0,$ whereas
\beqn
Q_2^*Q_1Q_2f(z,w)=Q_2^*Q_1\left( wS_1(z)\right) 
=Q_2^*P_{\mathfrak{Q}}\left( zwS_1(z)\right) =zS_1(z).
\eeqn  
Hence $\mathfrak{Q}$ is not $\varphi$-doubly commuting.

\noindent\textsf{Case (ii): Let $n = 1$, and assume that either $\theta_2$ is a constant inner function or $\theta_2(w) = w$. }  

Consider the function $f(z, w)=1.$ Then $f \in \mathfrak{Q}$ and $Q_1Q_2Q_2^*f(z,w)=0$, whereas
\beqn
Q_2^*Q_1Q_2f(z,w)=Q_2^*Q_1\left( w\right) 
=Q_2^*P_{\mathfrak{Q}}\left(zw\right) \neq 0,
\eeqn  
since $zw \notin \mathfrak{Q}$ and $zw \notin \theta_1(z)\theta_2(w) \HD.$ Hence $\mathfrak{Q}$ is not $\varphi$-doubly commuting.

\noindent\textsf{Case (iii): $\theta_2$ is nonconstant and $\theta_2(0)\neq 0.$}  

Consider $f(z, w)=z^{n-1}S_1(z)\theta_2(w) \in \mathfrak{Q}.$ It is easy to see that $Q_2^*Q_1Q_2f=0.$ On the other hand,
 \beqn
Q_1Q_2Q_2^*f(z,w)&=&Q_1Q_2Q_2^*\left(z^{n-1}S_1(z)\theta_2(w) \right)\\
&=&Q_1P_{\mathfrak{Q}}\left(z^{n-1}S_1(z)\theta_2(w)-z^{n-1}S_1(z)\theta_2(0) \right)
\\
&=&-P_{\mathfrak{Q}} \left(z^{n}S_1(z)\theta_2(0)\right),
\eeqn
which is nonzero as $\theta_2(0)\neq 0$ and $z^{n}S_1(z) \notin\theta_1(z)\theta_2(w)\HD.$
It follows that $\mathfrak{Q}$ is not $\varphi$-doubly commuting.

\noindent\textsf{Case (iv): Either $\theta_2(w)=w^m$, where $m\ge2$, or $\theta_{2}(w)=w$ with $n\ge2$ }

In this case, we have $1,z,w,zw\in\mathfrak{Q}$. Therefore, $\mathfrak{Q}$ is not $\varphi$-doubly commuting.

\noindent\textsf{Case (v): Assume that $\theta_2$ is nonconstant and there exists a nonconstant inner function $\theta_{3}$ such that $\theta_2(w)=w^m\theta_{3}(w),$ where $\theta_{3}(0)\neq 0$ and $m\in\mathbb{N}.$ } 

If $m>1$ or $n>1$, then $\mathfrak{Q}$ fails to be $\varphi$-doubly commuting as $zw \in \mathfrak{Q}.$ So let $m=n=1.$ Then, it is evident that $S_{1}(z)\theta_{3}(w),wS_{1}(z)\theta_{3}(w)\in\mathfrak{Q}$. Moreover, observe that $Q_{2}^*Q_{1}Q_{2}S_{1}(z)\theta_{3}(w)=0,$ while
\begin{align*}
Q_{1}Q_{2}Q_{2}^*S_{1}(z)\theta_{3}(w)&=Q_{1}P_{\mathfrak{Q}}\left(S_{1}(z)\theta_{3}(w)-S_{1}(z)\theta_{3}(0)\right)\\
&=P_{\mathfrak{Q}}\left(zS_{1}(z)\theta_{3}(w)-zS_{1}(z)\theta_{3}(0)\right)\\
&=zS_{1}(z)\theta_{3}(w)-zS_{1}(z)\theta_{3}(0).
\end{align*}
Since $\theta_{3}$ is nonconstant, it follows that $\mathfrak{Q}$ is not $\varphi$-doubly commuting.
\end{proof}

\begin{lemma}\label{keylemma3}
Let $S_1$ be a nonconstant singular inner function and $\theta_2$ be an inner function such that $ \theta_2(b)=0$ for some $b\in \mathbb{D}$. Then $\mathfrak{Q}=\HD \ominus S_1(z)\theta_2(w)\HD$ is not $\varphi$-doubly commuting.
\end{lemma}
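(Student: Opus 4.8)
\textbf{Proof proposal for Lemma \ref{keylemma3}.}

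The plan is to mimic the strategy used in Cases (iii) and (v) of Lemma \ref{keylemma2}: produce an explicit nonzero element of $\mathfrak{Q} = \HD \ominus S_1(z)\theta_2(w)\HD$ on which the two expressions $Q_1 Q_2 Q_2^* $ and $Q_2^* Q_1 Q_2$ disagree, thereby violating $\varphi$-doubly commutativity (see Definition \ref{phi-dc}). Since $\theta_2(b) = 0$, factor $\theta_2 = f_b \theta_3$ for some inner function $\theta_3$ on $\mathbb{D}$ (see \eqref{automorphismdisc}). I would split on whether $b = 0$ or $b \ne 0$, and within $b=0$ on whether $\theta_3$ is constant, since these change the precise reproducing-kernel-type function to test against.

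First, in the case $b \ne 0$, I would test on $f(z,w) = S_1(z)\, m_b(z,w)\, \theta_3(w)$ (see \eqref{R-Kernel} for $m_b$), which lies in $\mathfrak{Q}$ because $S_1$ divides the $z$-part and the $w$-part $m_b \theta_3$ is orthogonal to $\theta_2 \HDO = f_b \theta_3 \HDO$ by the standard one-variable model-space computation (as in \cite[Lemma 12]{CIL2018}). The computation $Q_2^* Q_1 Q_2 f$: applying $M_1 M_2$ multiplies by $zw$, and since $z$-multiplication stays inside $\mathfrak{Q}$ (because $S_1$ is singular, there is no $z$-truncation — mirroring Case (iii) of Lemma \ref{keylemma2}), one reduces to the one-variable action of $P_{\mathfrak{Q}_2} T_w$ followed by $T_w^*$ on $m_b \theta_3$, which by the model-operator formula returns $\bar b\, m_b \theta_3$ (up to the $\theta_3$ factor), hence $Q_2^* Q_1 Q_2 f = \bar b\, z S_1(z) m_b(z,w)\theta_3(w) \ne 0$; whereas $Q_2^* f = $ (the $w$-backward shift applied in the model) followed by $Q_1 Q_2$ produces a function that, after comparing, differs from the above by a nonzero multiple of $P_{\mathfrak{Q}}(z S_1(z)\theta_3(w))$ — nonzero since $z S_1(z)\theta_3(w) \notin S_1(z)\theta_2(w)\HD$. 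In the case $b = 0$ with $\theta_3$ nonconstant, I would instead test on $f(z,w) = S_1(z)\theta_3(w)$ exactly as in Case (v) of Lemma \ref{keylemma2}: then $Q_2^* Q_1 Q_2 f = 0$ while $Q_1 Q_2 Q_2^* f = z S_1(z)\theta_3(w) - z S_1(z)\theta_3(0)$, nonzero because $\theta_3$ is nonconstant and $z S_1(z)\theta_3(w)\notin S_1(z)\theta_2(w)\HD$. Finally, if $b = 0$ and $\theta_3$ is constant, then $\theta_2(w) = c\, w$ and the quotient module is $\HD \ominus c\, S_1(z) w \HD$; here $1 \in \mathfrak{Q}$, and $Q_2^* Q_1 Q_2 (1) = Q_2^* P_{\mathfrak{Q}}(zw)$, which is nonzero since $zw \notin \mathfrak{Q}$ and $zw \notin S_1(z) w \HD$ (as $S_1$ is nonconstant, so $z \notin S_1 \HDO$), while $Q_1 Q_2 Q_2^*(1) = 0$.

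The routine part is the bookkeeping that the chosen test functions genuinely belong to $\mathfrak{Q}$ and that the flagged remainder functions (like $z S_1(z)\theta_3(w)$) are not in $S_1(z)\theta_2(w)\HD$; both follow from the singular-inner hypothesis on $S_1$ (no zeros, so the $z$-variable is never truncated by $S_1$) together with the one-variable inner-function divisibility criterion. The main obstacle — really the only subtlety — is handling the case where $\theta_2$ has a zero at $b \ne 0$ and additional nontrivial structure: there one must be careful that the $m_b$-based test function, after the backward-shift operations, does not accidentally land back in the submodule, and one must track the scalar $\bar b \ne 0$ carefully through the model-operator formulas to ensure the discrepancy is genuinely nonzero. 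This is exactly parallel to, but slightly more delicate than, Lemma \ref{Keylemma1}, where the zero was in the $z$-variable; here the asymmetry of $\varphi$ (which truncates in $z$ but not $w$) works in our favor, so I expect no real difficulty, only care.
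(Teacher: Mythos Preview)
Your treatment of the $b=0$ cases is correct and essentially matches the paper (your ``$b=0$, $\theta_3$ constant'' is the paper's Case~(i), and your ``$b=0$, $\theta_3$ nonconstant'' with test vector $S_1(z)\theta_3(w)$ is a clean version of what the paper does in its Case~(iii)). The gap is in your $b\ne 0$ case: the single test function $f=S_1(z)\,m_b(w)\,\theta_3(w)$ does \emph{not} work when $\theta_3$ is a unimodular constant, i.e.\ when $\theta_2=f_b$ with $b\ne 0$. In that situation $\HDO\ominus\theta_2\HDO=\mathbb{C}\,m_b$ is one-dimensional, and a direct computation gives
\[
Q_2 f = b\,S_1 m_b,\qquad Q_2^* f=\bar b\,S_1 m_b,\qquad Q_1 f = z S_1 m_b,
\]
so that $Q_2^*Q_1Q_2 f=|b|^2\,zS_1 m_b = Q_1Q_2Q_2^* f$; the commutator vanishes on $f$. (Your asserted value ``$\bar b\,zS_1 m_b\theta_3$'' for $Q_2^*Q_1Q_2 f$ is off: when you run $T_w^*P_{(\theta_2\HDO)^\perp}T_w$ on $m_b$ you pick up $|b|^2$, not $\bar b$, and the promised discrepancy ``a nonzero multiple of $P_{\mathfrak{Q}}(zS_1\theta_3)$'' does not materialize.) More generally, the $b\ne0$ case cannot be handled with this one test vector; the paper splits it into several sub-cases (its Cases~(ii), (iv), (v)), and in Case~(v)---exactly $\theta_2=f_b$, $b\ne0$---uses a completely different witness $f=Q_1^*(S_1(z)-S_1(0))$, which lives in the $z$-model space and exploits the nonconstancy of $S_1$ directly rather than anything about the $w$-variable.

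Even when $\theta_3$ is nonconstant and $b\ne0$, your sketch is too optimistic: the actual difference $Q_2^*Q_1Q_2 f - Q_1Q_2Q_2^* f$ (cf.\ the paper's \eqref{theta1singu1}--\eqref{theta1singu2}) involves both a $\theta_3(0)$-correction and a term $P_{\mathfrak{Q}}(zS_1\theta_3(0))$, and showing it is nonzero requires further sub-casing (the paper separates $\theta_3(0)=0$ from $\theta_3(0)\ne0$, and in the latter uses an auxiliary zero $b'\ne b$ of $\theta_3$). So the ``only subtlety'' you flag is in fact several subtleties, and your $b\ne0$ branch needs to be broken up along the lines of the paper's Cases~(ii), (iv), (v), with Case~(v) requiring a genuinely different test function.
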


\begin{proof}
Since $ \theta_2(b)=0,$ there exists an inner function $\theta_3$ such that $\theta_2(w)=f_b(w)\theta_3(w)$ (see \eqref{automorphismdisc}). We now proceed with the proof by considering separate cases.

\noindent\textsf{Case (i): Let $\theta_3$ be a constant inner function and $b=0.$} 

Note that $Q_1Q_2Q_2^*(1)=0.$ Whereas, 
\beqn
Q_2^*Q_2Q_1(1)=Q_2^*Q_2(z)=Q_2^*P_{\mathfrak{Q}}(zw) \neq 0,
\eeqn
since $S_1$ is nonconstant. Thus $\mathfrak{Q}$ is not $\varphi$-doubly commuting.

\noindent\textsf{Case (ii): $b\neq0$ and $\theta_3$ has a zero other than $b.$ }

Consider the function $f(z, w)=m_b(z, w)S_1(z)\theta_3(w)$ (see \eqref{R-Kernel}). Clearly, $f \in \mathfrak{Q}.$
Now 
\beq  \label{theta1singu1}
Q_2^*Q_2Q_1f(z, w)\notag&=&Q_2^*Q_2Q_1 \left(m_b(z, w)S_1(z)\theta_3(w)\right)  \\
\notag&=&Q_2^*P_{\mathfrak{Q}} \left(zwm_b(z, w)S_1(z)\theta_3(w)\right) \\
&=& \frac{bzS_1(z)}{w}\left[m_b(z, w)\theta_3(w)-\theta_3(0)\right].
\eeq
On the other hand,
\beq \label{theta1singu2}
Q_1Q_2Q_2^*f(z, w)\notag&=&Q_1Q_2Q_2^* \left(m_b(z, w)S_1(z)\theta_3(w)\right) \\
\notag&=&  Q_1\left(S_1(z)m_b(z, w)\theta_3(w)-S_1(z)\theta_3(0)\right)\\
&=& zS_1(z)m_b(z, w)\theta_3(w)- P_{\mathfrak{Q}}\left(zS_1(z)\theta_3(0)\right).
\eeq

\noindent\textsf{Subcase (a): $\theta_3(0) = 0$} 

Then from \eqref{theta1singu1} and \eqref{theta1singu2}, we have
$$Q_2^*Q_2Q_1f(z, w)= bzS_1(z)\dfrac{m_b(z, w)\theta_3(w)}{w}$$ 
and $ Q_1Q_2Q_2^*f(z, w)=zS_1(z)m_b(z, w)\theta_3(w).$ Thus $\mathfrak{Q}$ is not $\varphi$-doubly commuting.

\noindent\textsf{Subcase (b): Assume $\theta_3(0) \ne 0$ and there exists $b'$ such that $b'\neq b$ and $\theta_3(b') = 0$.}  

Suppose, for the sake of contradiction, that $\mathfrak{Q}$ is $\varphi$-doubly commuting. Then from \eqref{theta1singu1} and \eqref{theta1singu2}, we have
$$
P_{\mathfrak{Q}}\left(\frac{bz S_1(z)}{w}\left[m_b(z,w)\theta_3(w) - \theta_3(0)\right]- z S_1(z) m_b(z,w)\theta_3(w) + z S_1(z) \theta_3(0) \right) = 0,
$$
which implies
$$
\frac{bz}{w}\left[m_b(z,w)\theta_3(w) - \theta_3(0)\right]-z m_b(z,w)\theta_3(w) + z\theta_3(0)=\theta_3(w) f_b(w) g(z,w)
$$
for some $ g \in \HD.$ At $w= b'$, the right-hand side vanishes as $\theta_3(b') = 0$, while the left-hand side becomes
$$\frac{(b'-b)\theta_3(0)z}{b'},$$
yielding a contradiction. Hence, $\mathfrak{Q}$ is not $\varphi$-doubly commuting.

\noindent\textsf{Case (iii): There exists a singular inner function $S_2$ such that $\theta_2(w)=w^nS_2(w),$ where $n \in \mathbb N.$ In addition, $S_2$ is nonconstant when $n=1.$} 

Consider the function $f(z, w)= w^{n-1}S_1(z)S_2(w).$ Then it evident that $f \in \mathfrak{Q}.$ We have $Q_2^*Q_1Q_2f(z,w)=0.$ On the other hand,
\beqn
Q_1Q_2Q_2^*f(z,w)&=&Q_1Q_2Q_2^*(w^{n-1}S_1(z)S_2(w))
\\
&=&\begin{cases}
Q_1\left(S_1(z)S_2(w)-S_1(z)S_2(0)\right)&\text{if}~n=1\\
Q_1(w^{n-1}S_1(z)S_2(w))&\text{if}~n \ge 2
\end{cases}
\\
&=&\begin{cases}
zS_1(z)S_2(w)-zS_1(z)S_2(0)&\text{if}~n=1\\
zw^{n-1}S_1(z)S_2(w)&\text{if}~n \ge 2.
\end{cases}
\eeqn
Hence, $\mathfrak{Q}$ is not $\varphi$-doubly commuting.

\noindent\textsf{Case (iv): For some $ b \ne 0$, $\theta_2(w) = f_b(w)^n S_2(w),$ where $S_2$ is a singular inner function, nonconstant when $n = 1.$}

Consider the function $f(z, w)= m_b(z, w)f_b(w)^{n-1}S_1(z)S_2(w).$ Now
\beq \label{theta1singu3}
Q_2^*Q_1Q_2f(z,w) \notag&=&Q_2^*Q_1Q_2\left(m_b(z, w)f_b(w)^{n-1}S_1(z)S_2(w)\right)\\
\notag&=&Q_2^*Q_1\left(bm_b(z, w)f_b(w)^{n-1}S_1(z)S_2(w)\right)\\
\notag&=& Q_2^*\left(zbm_b(z, w)f_b(w)^{n-1}S_1(z)S_2(w)\right)\\
&=&\frac{bzS_{1}(z)}{w}\left[m_b(z, w)f_b(w)^{n-1}S_2(w)-(-bc)^{n-1}S_2(0)\right].
\eeq
In the later equation \eqref{theta1singu3}, we have taken $f_{b}$ as in \eqref{automorphismdisc}, so $f_{b}(0)=(-bc)$. On the other hand
\beqn
Q_1Q_2Q_2^*f(z,w)&=&Q_1Q_2Q_2^*\left(m_b(z, w)f_b(w)^{n-1}S_1(z)S_2(w)\right)\\
&=&Q_1P_{\mathfrak{Q}}\left(m_b(z, w)f_b(w)^{n-1}S_1(z)S_2(w)-(-bc)^{n-1}S_1(z)S_2(0)\right)\\
&=&zm_b(z, w)f_b(w)^{n-1}S_1(z)S_2(w)-S_2(0)(-bc)^{n-1}Q_1P_{\mathfrak{Q}}\left(S_1(z)\right).
\\&=&zm_b(z, w)f_b(w)^{n-1}S_1(z)S_2(w)\\&-&S_2(0)(-bc)^{n-1}zS_1(z)\left[1-(-\bar b \bar c)^n(f_b(w))^nS_2(w)\overline{S_2(0)}\right].
\eeqn
This with \eqref{theta1singu3} yields that $\mathfrak{Q}$ is not $\varphi$-doubly commuting.  

\noindent\textsf{Case (v): $\theta_2(w)=f_b(w)$ where $b\neq 0$.} Take $f(z, w)=Q_1^*(S_1(z)-S_1(0)) \in \mathfrak{Q}.$ It is easy to see that $Q_1Q_2Q_2^*f=0$, whereas
\beqn
Q_2^*Q_2Q_1f(z,w)&=&Q_2^*Q_2Q_1Q_1^*(S_1(z)-S_1(0))\\
&=&Q_2^*Q_2(S_1(z)-S_1(0))\\
&=&Q_2^*P_{\mathfrak{Q}}(wS_1(z)-wS_1(0))\\
&=&S_1(z)-S_1(0)\neq 0
\eeqn
as $S_1$ is nonconstant. This completes the proof.
\end{proof}

We next provide a characterization of $\varphi$-doubly commuting quotient modules for inner functions of the form $\theta_1(z)\theta_2(w)$, under specific assumptions on their zeros.

\begin{theorem}\label{maincharasepa}
Let $\theta_1$ and $\theta_2$ be inner functions on $\mathbb D$ such that $\theta_1(a)=\theta_2(b)=0$ for some $a,b \in \mathbb D.$ Then $\mathfrak{Q}=\HD\ominus \theta_1(z)\theta_2(w)\HD$ is $\varphi$-doubly commuting if and only if $\theta_1(z) = c_1z$ and $\theta_2(w) = c_2w$ for some $c_1, c_2 \in \mathbb{T}.$
\end{theorem}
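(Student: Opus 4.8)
The forward implication is where the work lies; the converse is immediate. Indeed, if $\theta_1(z)=c_1z$ and $\theta_2(w)=c_2w$ with $c_1,c_2\in\mathbb{T}$, then $\mathfrak{S}:=\theta_1(z)\theta_2(w)\HD=zw\HD$, and, as in the proof of Lemma \ref{Relation_Quotient module actions}, $Q_1Q_2$ coincides with the compression $P_{\mathfrak{Q}}M_1M_2|_{\mathfrak{Q}}$; since $M_1M_2f=zwf\in zw\HD=\mathfrak{S}$ for every $f\in\HD$, we get $Q_1Q_2=0$, whence $Q_1Q_2Q_2^{*}=0=Q_2^{*}Q_1Q_2$ and $\mathfrak{Q}$ is $\varphi$-doubly commuting.

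For the forward implication, suppose $\mathfrak{Q}$ is $\varphi$-doubly commuting, and first pin down $\theta_1$. If $\theta_1$ had a zero in $\mathbb{D}\setminus\{0\}$, Lemma \ref{Keylemma1} would be violated; hence $\theta_1$ has no nonzero zeros, and since $\theta_1(a)=0$ by hypothesis, necessarily $a=0$. By the Blaschke--singular factorization we may therefore write $\theta_1(z)=z^{n}S_1(z)$ with $n\in\mathbb{N}$ and $S_1$ singular inner (absorbing a unimodular constant into $S_1$). If $n\ge 2$, or if $n=1$ with $S_1$ nonconstant, Lemma \ref{keylemma2} again gives a contradiction; therefore $n=1$ and $S_1$ is a unimodular constant, i.e.\ $\theta_1(z)=c_1z$ for some $c_1\in\mathbb{T}$.

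It remains to pin down $\theta_2$. Normalizing $c_1=1$, put $\mathcal{K}:=\HDO\ominus\theta_2\HDO$ and $S:=P_{\mathcal{K}}T_w|_{\mathcal{K}}$. Since $M_1=T_z\otimes I$ is an isometry and $1\otimes\HDO=\HD\ominus z\HD$ is orthogonal to $z\theta_2(w)\HD$, one has the orthogonal decomposition
$$\mathfrak{Q}=\HD\ominus z\theta_2(w)\HD=\bigl(1\otimes\HDO\bigr)\oplus\bigl(z\HDO\otimes\mathcal{K}\bigr).$$
Using $Q_1Q_2f=P_{\mathfrak{Q}}M_1M_2f$ and the $T_w^{*}$-invariance of $\mathcal{K}$, a direct computation in this decomposition shows that $Q_2$ acts as $h\mapsto wh$ on $1\otimes\HDO$ and as $I\otimes S$ on $z\HDO\otimes\mathcal{K}$, while $Q_1Q_2$ sends $1\otimes h\mapsto z\otimes P_{\mathcal{K}}(wh)$ and $z^{i}\otimes g\mapsto z^{i+1}\otimes Sg$ for $i\ge1$. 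Computing $Q_1Q_2Q_2^{*}$ and $Q_2^{*}Q_1Q_2$ and comparing components, $\mathfrak{Q}$ is $\varphi$-doubly commuting if and only if
\begin{itemize}
\item[$\mathrm{(I)}$] $S$ is normal on $\mathcal{K}$, and
\item[$\mathrm{(II)}$] $P_{\mathcal{K}}\bigl(h-h(0)\bigr)=S^{*}P_{\mathcal{K}}(wh)$ for all $h\in\HDO$.
\end{itemize}
By \cite[Lemma 2.4]{DaGoSa20} (cf.\ the proof of Lemma \ref{Q2 normal}), (I) forces $\dim\mathcal{K}=1$, so by \cite[Propositions 5.16 and 5.19]{GMR2016} $\theta_2$ is a single Möbius automorphism of $\mathbb{D}$; since $\theta_2(b)=0$, this gives $\theta_2=f_b$. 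For $\theta_2=f_b$ one has $\mathcal{K}=\mathbb{C}\kappa_b$ with $\kappa_b(w)=(1-\bar bw)^{-1}$, $T_w^{*}\kappa_b=\bar b\kappa_b$, and $P_{\mathcal{K}}u=(1-|b|^{2})u(b)\kappa_b$; evaluating (II) at $h\equiv 1$ then yields $0=|b|^{2}$, so $b=0$ and $\theta_2=f_0$, i.e.\ $\theta_2(w)=c_2w$ for some $c_2\in\mathbb{T}$. Restoring the normalization gives $\theta_1(z)=c_1z$ and $\theta_2(w)=c_2w$.

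The main obstacle is the last paragraph. Because $\varphi$-doubly commutativity is not symmetric in $z$ and $w$, the clean arguments of Lemmas \ref{Keylemma1} and \ref{keylemma2} that handle $\theta_1$ have no direct analogue for $\theta_2$, so one is forced into the explicit operator computation on $\HD\ominus z\theta_2(w)\HD$. The delicate point there is verifying that the comparison of $Q_1Q_2Q_2^{*}$ with $Q_2^{*}Q_1Q_2$ genuinely decouples across the summands $1\otimes\HDO$ and $z\HDO\otimes\mathcal{K}$, leaving no cross terms — this is exactly what reduces $\varphi$-doubly commutativity to the two conditions (I) and (II).
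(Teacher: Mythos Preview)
Your proof is correct. The first half (the reduction of $\theta_1$ to $c_1z$ via Lemmas \ref{Keylemma1} and \ref{keylemma2}) matches the paper exactly, as does the converse. For the $\theta_2$ step, however, you take a genuinely different route. The paper argues by hand with three test functions: first $f=\theta_2(w)$ to force $\theta_2(0)=0$, then the observation $1,z,w,zw\in\mathfrak{Q}$ to rule out $\theta_2(w)=w^m\theta_3(w)$ with $m\ge 2$, and finally $f=\theta_3(w)$ to show $\theta_3$ is constant. You instead set up the orthogonal decomposition $\mathfrak{Q}=(1\otimes\HDO)\oplus(z\HDO\otimes\mathcal{K})$ and compute the block structure of $Q_1Q_2$ and $Q_2$, reducing $\varphi$-doubly commutativity to the two clean conditions (I) normality of the compressed shift $S$ and (II) the identity $P_{\mathcal{K}}(h-h(0))=S^*P_{\mathcal{K}}(wh)$. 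This is more conceptual: condition (I) immediately invokes \cite[Lemma 2.4]{DaGoSa20} to force $\dim\mathcal{K}=1$, and then (II) at $h\equiv 1$ kills $b$ in one line. The paper's test-function approach is more elementary in that it needs no operator-theoretic setup, while yours explains structurally why the constraint is so rigid and would adapt more readily to related questions; both reach the same conclusion with comparable effort.
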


\begin{proof}
Let $\mathfrak{Q}$ be $\varphi$-doubly commuting. Lemmas \ref{Keylemma1} and \ref{keylemma2} together forces that $\theta_{1}(z)=c_{1}z$ for some $c_1 \in \mathbb T$. This, in turn, yields $\mathfrak{Q}=\HD \ominus c_{1} z \theta_{2}(w) \HD$. We now claim that $0$ is the only possible zero of $\theta_{2}$. Let, if possible, $\theta_{2}(0)\neq0$. Consider the function $f(z,w)=\theta_{2}(w)\in\mathfrak{Q}$. Then, $Q_{2}^*Q_{1}Q_{2}f(z,w)=0$, whereas
\[Q_{1}Q_{2}Q_{2}^*f(z,w)=Q_{1}(\theta_{2}(w)-\theta_{2}(0))=-P_{\mathfrak{Q}}(z\theta_{2}(0)) \neq 0\]
yielding a contradiction. Hence $\theta_{2}(w)=w^{m}\theta_{3}(w)$, where $\theta_{3}$ is an inner function. If $m > 1$, then this is not possible, because in that case $1, z, w, zw \in \mathfrak{Q}$. Therefore, $\theta_{2}(w)=w\theta_{3}(w).$ Consider the function $f(z,w)=\theta_{3}(w)$, which belongs to $\mathfrak{Q}$. Then $Q_{2}^*Q_{1}Q_{2}f(z,w)=0,$ while $Q_{1}Q_{2}Q_{2}^*f(z,w)=z\theta_{3}(w)-z\theta_{3}(0)$.
Since $\mathfrak{Q}$ is $\varphi$-doubly commuting, it follows that $\theta_3$ must be a constant inner function. The converse is immediate.
\end{proof}

We are now ready to prove Theorem \ref{finalcharasepa}.

\begin{proof}[Proof of Theorem \ref{finalcharasepa}]
Let $\mathfrak{Q}$ be $\varphi$-doubly commuting. 
\begin{enumerate}[{\normalfont(i)}]
\item If both $\theta_{1}$ and $\theta_{2}$ are singular, then according to Corollary \ref{keycoro1}, we deduce that both $\theta_{1}$ and $\theta_{2}$ are constant inner functions.
\item Let $\theta_{2}(b)=0$ for some $b\in\mathbb{D}$, and suppose that $\theta_{1}$ is a singular inner function. If $\theta_{1}$ is a nonconstant singular inner function, then by Lemma \ref{keylemma3}, we obtain a contradiction. Hence $\theta_{1}$ must be constant. This further implies that $\mathfrak{Q}=\HD\ominus\theta_{2}(w)\HD$, which is doubly commuting. Therefore, by Theorem \ref{DC+varDC}, we deduce that $\theta_{2}=f_{b}$ for some $b\in\mathbb{D}$ (see \eqref{automorphismdisc}).
\item Let $\theta_{1}(a) = 0$ for some $a \in \mathbb{D}$, and let $\theta_{2}$ be a singular inner function on $\mathbb{D}$. Then, by Lemmas \ref{Keylemma1} and \ref{keylemma2}, we have $\theta_{1}(z)=c_{1}z$ for some $c_{1}\in\mathbb{T}$. We now claim that $\theta_{2}$ must be constant. To see this, consider the function $f(z, w) = \theta_{2}(w)$, which belongs to $\mathfrak{Q}$. Then, $Q_{2}^*Q_{1}Q_{2}f(z,w)=0$, while 
\[Q_{1}Q_{2}Q_{2}^*f(z,w)=Q_{1}(\theta_{2}(w)-\theta_{2}(0))=-P_{\mathfrak{Q}}(z\theta_{2}(0))=z\theta_{2}(w)-z\theta_{2}(0).\]
Now the $\varphi$-doubly commutativity forces that $\theta_{2}$ is a constant function.
\item If $\theta_{1}$ and $\theta_{2}$ both have zeros, then by Theorem \ref{maincharasepa}, we obtain $\theta_{1}(z)=c_{1}z$ and $\theta_{2}(w)=c_{2}w$ for some $c_{1},c_{2}\in\mathbb{T}$.
\end{enumerate}
The converse part is immediate. This completes the proof.
\end{proof}

We now proceed to prove Theorem \ref{Hartogscharasepa}.

\begin{proof}[Proof of Theorem \ref{Hartogscharasepa}]
The proof follows from Theorem \ref{finalcharasepa}, along with \eqref{Equivalent_DC} and \eqref{Transference}.
\end{proof}

Finally, we conclude this section with a $\varphi$-doubly commuting quotient module $\HD \ominus\theta\HD$ such that there do not exist inner functions $\theta_{1}$ and $\theta_{2}$ on $\mathbb{D}$ satisfying
$$\theta(z,w)=\theta_{1}(z)\theta_{2}(w).$$

\begin{proposition} \label{notsepatheta}
Let $\theta_a(z, w) = \frac{zw - a}{1 - azw}$, where $0<a<1$. Then, the quotient module $\mathfrak{Q}_{a}=\HD\ominus\theta_{a}\HD$ is $\varphi$-doubly commuting.
\end{proposition}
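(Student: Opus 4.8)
The plan is to exploit that $\theta_a$ is a function of the single variable $zw$. Writing $\psi(\zeta)=\dfrac{\zeta-a}{1-a\zeta}$ for the Blaschke factor with zero at $a$, we have $\theta_a(z,w)=\psi(zw)$, which gives at once the polynomial identity
\[
zw-a=\theta_a(z,w)\,(1-azw).
\]
The entire argument then reduces to showing that $Q_1Q_2=a\,I_{\mathfrak Q_a}$: once this is established, $Q_1Q_2Q_2^{*}=aQ_2^{*}=Q_2^{*}(aI)=Q_2^{*}Q_1Q_2$, which is precisely the $\varphi$-doubly commutativity of $\mathfrak Q_a$ (see Definition \ref{phi-dc}).

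First I would record that $\theta_a$ is inner on $\mathbb D^2$ — it is holomorphic and bounded there since $|azw|<1$, and $|\theta_a|=1$ on $\mathbb T^2$ because $|zw|=1$ and $\psi$ is unimodular on $\mathbb T$ — so that $\mathfrak Q_a=\HD\ominus\theta_a\HD$ is indeed a quotient module and $\theta_a\HD$ a submodule of $\HD$. Next I would check that $Q_1Q_2=P_{\mathfrak Q_a}M_1M_2|_{\mathfrak Q_a}$, i.e. that the middle projection may be dropped: for $f\in\mathfrak Q_a$, writing $M_2f=P_{\mathfrak Q_a}M_2f+(I-P_{\mathfrak Q_a})M_2f$, applying $M_1$ and then $P_{\mathfrak Q_a}$, the second summand contributes $P_{\mathfrak Q_a}M_1\big((I-P_{\mathfrak Q_a})M_2f\big)=0$ because $(I-P_{\mathfrak Q_a})M_2f\in\theta_a\HD$ and $\theta_a\HD$ is $M_1$-invariant. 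Finally, for \emph{every} $f\in\HD$ the displayed identity yields
\[
(M_1M_2-a)f=(zw-a)f=\theta_a\cdot\big((1-azw)f\big)\in\theta_a\HD=\mathfrak Q_a^{\perp},
\]
since $(1-azw)f\in\HD$; specializing to $f\in\mathfrak Q_a$ gives $P_{\mathfrak Q_a}(M_1M_2-a)|_{\mathfrak Q_a}=0$, hence $Q_1Q_2=a\,I_{\mathfrak Q_a}$, and the conclusion follows.

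I do not expect a real obstacle: all the content sits in spotting the factorization $zw-a=\theta_a(1-azw)$, which forces the mixed operator $Q_1Q_2$ to collapse to the scalar $a$. The only step needing a little care is the identity $Q_1Q_2=P_{\mathfrak Q_a}M_1M_2|_{\mathfrak Q_a}$, resting solely on $\theta_a\HD$ being a submodule. For intuition one can also argue structurally: decomposing $\HD$ along the diagonals $\mathcal D_n=\overline{\operatorname{span}}\{z^iw^j:i-j=n\}$ (each invariant under $M_1M_2$), $\theta_a$ acts on $\mathcal D_n$ as $\psi$ acts on a copy of $\HDO$, so $\mathfrak Q_a$ meets each $\mathcal D_n$ in the one-dimensional model space of $\psi$, and the compression of $M_1M_2$ to it is multiplication by the zero $a$ of $\psi$ — again giving $Q_1Q_2=aI_{\mathfrak Q_a}$.
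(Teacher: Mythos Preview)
Your proof is correct and takes a genuinely different route from the paper. The paper invokes an explicit orthogonal basis
\[
B_a=\{(1+a\theta_a)w^{j},\ (1+a\theta_a),\ (1+a\theta_a)z^{i}:i,j\ge1\}
\]
for $\mathfrak Q_a$ (quoted from \cite{GW2009}) and then computes, basis vector by basis vector, the actions of $Q_1$, $Q_2$, and $Q_2^*$ to verify $[Q_1Q_2,Q_2^*]f=0$ for every $f\in B_a$. Your argument bypasses all of this by extracting the single algebraic fact $Q_1Q_2=aI_{\mathfrak Q_a}$ directly from the factorization $zw-a=\theta_a(1-azw)$; the commutator then vanishes for the trivial reason that scalars commute with everything. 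In effect, the paper's basis computations implicitly rediscover $Q_1Q_2=aI$ (one can read it off from their equations \eqref{theta_a-1}--\eqref{theta_a-4}), but they never isolate or state it. Your approach is shorter, self-contained (no external citation for the basis), and gives a structural explanation; the paper's approach, though longer, yields as a by-product the full action of $Q_1$, $Q_2$, $Q_2^*$ on $\mathfrak Q_a$, which could be useful for other purposes.
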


\begin{proof}
It follows from \cite[p. 3225]{GW2009} that $\theta_a$ is not separating for any $a \in (0,1).$ By \cite[Lemma 3.3]{GW2009}, the set  
$$B_a=\left\{(1 + a\theta_a)w^{j}, (1 + a\theta_a), (1 + a\theta_a)z^{i}:i,j\ge1\right\}$$
forms an orthogonal basis for $\mathfrak{Q}_{a}$. Let $P_a:\HD\to\mathfrak{Q}_{a}$ denote the orthogonal projection. We observe that for each $i\ge1$
\begin{align}
\nonumber Q_{2}(1+a\theta_{a})z^{i}&=P_{a}\left[(1+a\theta_{a})z^{i}w\right]\\
\nonumber&=P_{a}\left[z^{i-1}(1+a\theta_{a})zw\right]\\
\nonumber&=P_{a}\left[z^{i-1}(1+a\theta_{a})((1-azw)\theta_a +a)\right]\\
\label{theta_a-1}&=az^{i-1}(1+a\theta_{a}).
\end{align}
The last equality holds because $z^{i-1}(1+a\theta_{a})(1-azw)\theta_a\in\theta_{a}\HD$. We also obtain
\begin{align}\label{theta_a-2}
Q_2(1+a\theta_a)w^j=(1+a\theta_a)w^{j+1}, \quad j\ge0.
\end{align}
Similarly, we obtain
\begin{align}\label{theta_a-3}
Q_{1}(1+a\theta_a)z^i=(1+a\theta_a)z^{i+1}, \quad i\ge0,
\end{align}
and
\begin{align}\label{theta_a-4}
Q_{1}(1+a\theta_{a})w^{j}=P_{a}\left[w^{j-1}(1+a\theta_{a})zw\right]=aw^{j-1}(1+a\theta_{a}),\quad \mbox{for all}~j\ge1.
\end{align}
Let $i\ge0$ be fixed. Clearly, for $j\ge0$
\begin{align}\label{theta_a-5}
\langle Q_2^*(1+a\theta_a)z^i,  (1+a\theta_a)w^j\rangle=\langle (1+a\theta_a)z^i,(1+a\theta_a)w^{j+1}\rangle=0.
\end{align}
Further, for $j\ge1$ we calculate
\begin{align}\label{theta_a-6}
\nonumber\langle Q_2^*(1+a\theta_a)z^i, (1+a\theta_a)z^j\rangle &=\langle (1+a\theta_a)z^i, Q_2\left[(1+a\theta_a)z^j\right]\rangle \\ \nonumber&\overset{\eqref{theta_a-1}}{=}\langle (1+a\theta_a)z^{i},  az^{j-1}(1+a\theta_a)\rangle\\
&=\begin{cases}
a\|(1+a\theta_a)z^{i}\|^2 & \text{ if }~j=i+1,\\
0 &\text{ elsewhere}.
\end{cases}
\end{align}
Thus, by \eqref{theta_a-5} and \eqref{theta_a-6} we have
\begin{align}\label{theta_a-8}
Q_2^*(1+a\theta_a)z^{i}=a\|(1+a\theta_a)z^{i}\|^2(1+a\theta_a)z^{i+1}\quad\mbox{for $i\ge 0$.}
\end{align}
Using \eqref{theta_a-1} and \eqref{theta_a-2} we similarly calculate
\begin{align}\label{theta_a-9}
Q_2^*(1+a\theta_a)w^j=\|(1 +a\theta_a)w^j\|^2(1+a\theta_a)w^{j-1}\quad\mbox{for $j \ge 1$}.
\end{align}
Now, from \eqref{theta_a-1}-\eqref{theta_a-4}, \eqref{theta_a-8} and \eqref{theta_a-9} we finally have
$$[Q_{1}Q_{2},Q_{2}^*]f=0\quad\mbox{for all $f\in B_{a}$}.$$
This completes the proof.
\end{proof}

\section{Essential normality and doubly commutativity of quotient modules induced by polynomials}\label{Sec4}
In this section, we study the quotient modules of the form $\mathcal{Q}_p:=\HT\ominus p\HT$, where $p\in\mathbb{C}[z,w].$ For each $m \in \mathbb Z_+,$ let $\mathfrak F_m$ denote the finite dimensional closed subspace of $\HT$ spanned by $F_m,$ where 
$$F_m=\Big\{\frac{1}{w}\left(\frac{z}{w}\right)^{m-j}w^j\Big\}_{j=0}^{m}.$$ 
It follows that $$\HT=\bigoplus_{m=0}^{\infty}\mathfrak F_m.$$ 

\begin{definition}
Given a monomial $z^i w^j$ with nonnegative integer exponents $i,j$, we define its \textit{Hartogs degree} as $ 2i+j+1 $.  
\end{definition}

It is worth noting that two monomials of different usual degrees may have the same Hartogs degree. For instance, consider the monomials $p(z, w)=z$ and $q(z, w)=w^2.$ While their degrees differ, their Hartogs degree is the same, both being equal to $3.$

\begin{remark}
 A function $f\in\mathcal{Q}_p\cap \mathfrak{F}_m$ given by $f(z,w)=\sum_{i=0}^{m}c_{i}\frac{1}{w}\left(\frac{z}{w}\right)^{m-i}w^{i}$ for some scalars $c_0, c_1,\ldots,c_m$ if and only if 
\beq \label{Inner-product expansion}
\sum_{i=0}^{m} c_i \left\langle \frac{1}{w} \left(\frac{z}{w}\right)^{m-i} w^i, p(z, w) g(z, w) \right\rangle=0, \quad g \in \HT, m \in \mathbb Z_+.
\eeq \eof
\end{remark}

\subsection{Dimensional analysis} \label{Dim analysis}
In this subsection, we are interested in describing the dimension of $\mathcal{Q}_{p}\cap\mathfrak{F}_{m}$ in terms of $p$ and $m.$ This study will provide the necessary foundation for establishing the essential normality and the doubly commutativity of $\mathcal{Q}_{p}$. 

\begin{theorem} \label{dimensional analysis}
Let $m \in \mathbb Z_+.$ Then the following statements are valid.
\begin{itemize}
\item [$\mathrm {(i)}$] Let $p(z, w) =z^qw^n,$ where $q ,n \in \mathbb Z_+.$ Then 
$$\operatorname{dim}(\mathcal{Q}_p \cap \mathfrak{F}_m)= \begin{cases} 
m+1 & \text{if } m <2q+n, \\
2q+n & \text{if } m \ge 2q+n. 
\end{cases}$$
\item [$\mathrm {(ii)}$] Let $\displaystyle p(z,w) =\sum_{i+j \geq 1} a_{i,j} z^i w^j$ be a polynomial in which no two summands have the same Hartogs degree. Then there exists $m_0 \in \mathbb N$ such that 
$$\operatorname{dim}(\mathcal{Q}_p \cap \mathfrak{F}_m) = \min\{i+j : a_{i, j} \neq 0 \text{ in } p\} + \min\{i : a_{i, j} \neq 0 \text{ in } p\}$$ for all $m \ge m_0.$
\item [$\mathrm {(iii)}$] Let $b_k \in \mathbb{C} \setminus \{0\}$ for $0 \leq k \leq t$ for some $t \in \mathbb Z_+.$ If $\displaystyle p(z,w)=\sum_{k=0}^t b_k z^{q_k}w^{n_k}$, where $q_{k}\le q_{k+1}$ for all $0\le k\le t-1$, is a polynomial with each summand having same Hartogs degree, then 
$$\operatorname{dim}(\mathcal{Q}_p \cap \mathfrak{F}_m)= \begin{cases} 
m+1 & \text{if } m <2q_{0}+n_{0}, \\
2q_{0}+n_{0} & \text{if } m\ge q_{t}+q_{0}+n_{0}.
\end{cases}$$
\end{itemize}   
\end{theorem}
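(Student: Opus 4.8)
\textbf{Proof proposal for Theorem \ref{dimensional analysis}.}

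The plan is to work directly from the characterization \eqref{Inner-product expansion}: a vector $f=\sum_{i=0}^{m}c_{i}\frac{1}{w}\left(\frac{z}{w}\right)^{m-i}w^{i}\in\mathfrak{F}_{m}$ lies in $\mathcal{Q}_{p}$ precisely when $f\perp p\HT$, i.e. when $\langle f, p\cdot\frac{1}{w}\left(\frac{z}{w}\right)^{s}w^{t}\rangle=0$ for all $s,t\ge 0$. The first step is to set up the right bookkeeping: express $p\cdot\frac{1}{w}\left(\frac{z}{w}\right)^{s}w^{t}$ in the orthonormal basis $\{\frac{1}{w}\left(\frac{z}{w}\right)^{a}w^{b}:a,b\ge 0\}$ of $\HT$, using that multiplication by $z^{q}w^{n}$ sends $\frac{1}{w}\left(\frac{z}{w}\right)^{a}w^{b}$ to $\frac{1}{w}\left(\frac{z}{w}\right)^{a+q}w^{b+n}$ (this follows since $z=\frac{z}{w}\cdot w$, so $z^{q}w^{n}\cdot\frac{1}{w}\left(\frac{z}{w}\right)^{a}w^{b}=\frac{1}{w}\left(\frac{z}{w}\right)^{a+q}w^{b+n}$, and all such basis vectors are genuinely in $\HT$ because $a+q\ge 0$ and $(a+q)+(b+n-1)+1\ge 0$). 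The key observation is that multiplication by $z^{q}w^{n}$ shifts the index $m$ of $\mathfrak{F}_{m}$ to $\mathfrak{F}_{m+2q+n}$ only when $b+n\le a+q$; otherwise the image leaves the ``triangular'' range and the relevant inner products vanish. So the orthogonality conditions on $f\in\mathfrak{F}_{m}$ become a finite system of linear equations on $(c_{0},\dots,c_{m})$, and $\dim(\mathcal{Q}_{p}\cap\mathfrak{F}_{m})$ is $m+1$ minus the rank of that system.

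For part (i), $p=z^{q}w^{n}$: the conditions say that the ``shifted'' coefficient vector must be orthogonal to everything in $\mathfrak{F}_{m}$ coming from $\mathfrak{F}_{m-2q-n}$ via $M_{z}^{q}M_{w}^{n}$. When $m<2q+n$ there are no such constraints (the preimage is empty), so $\dim=m+1$; when $m\ge 2q+n$ the constraint is that $f$ is orthogonal to an $(m-2q-n+1)$-dimensional image which one checks has full rank, giving $\dim=(m+1)-(m-2q-n+1)=2q+n$. I would verify the rank claim by noting that $M_{z}^{q}M_{w}^{n}$ restricted to the appropriate part of $\mathfrak{F}_{m-2q-n}$ is injective (it is a shift composed with the projection to $\mathfrak{F}_{m}$, and the relevant basis vectors land on distinct basis vectors of $\mathfrak{F}_{m}$). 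For part (iii), $p=\sum_{k}b_{k}z^{q_{k}}w^{n_{k}}$ with all summands of the same Hartogs degree $2q_{k}+n_{k}=2q_{0}+n_{0}=:d$: every summand shifts $\mathfrak{F}_{m-d}$ into $\mathfrak{F}_{m}$, so $p$ itself maps $\mathfrak{F}_{m-d}$ into $\mathfrak{F}_{m}$; the conditions for $m<d$ are vacuous (giving $m+1$), and for $m$ large enough — precisely $m\ge q_{t}+q_{0}+n_{0}$, which guarantees that even the ``widest'' summand $z^{q_{t}}$ acts without truncation on the whole of $\mathfrak{F}_{m-d}$ — the map $P_{\mathfrak{F}_{m}}M_{p}|_{\mathfrak{F}_{m-d}}$ is injective, so $\dim=(m+1)-(m-d+1)=d=2q_{0}+n_{0}$. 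The injectivity here is the crux: I would argue it by looking at the ``leading'' coefficient, e.g. the image of $\frac{1}{w}\left(\frac{z}{w}\right)^{m-d}$ (the extreme basis element of $\mathfrak{F}_{m-d}$) picks up the term $b_{0}$ times the extreme basis element $\frac{1}{w}\left(\frac{z}{w}\right)^{m-q_0-n_0}w^{n_0}$ of $\mathfrak{F}_m$ (and only $b_0$, since the other $q_k$ are larger), so a Gaussian-elimination/triangularity argument on the matrix of $P_{\mathfrak{F}_{m}}M_{p}|_{\mathfrak{F}_{m-d}}$ shows it has full column rank.

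Part (ii) is where the real work lies and I expect it to be the main obstacle. Here $p=\sum_{i+j\ge 1}a_{i,j}z^{i}w^{j}$ with all summands of \emph{distinct} Hartogs degrees, so different monomials shift $\mathfrak{F}_{\bullet}$ by different amounts and the resulting linear system couples many different $\mathfrak{F}_{m'}$'s. The strategy is: let $\delta=\min\{i+j:a_{i,j}\ne 0\}$ and among those achieving $\delta$ let the one with smallest $i$ have index $(i_{0},j_{0})$; I would show that for $m$ large the orthogonality system for $f\in\mathfrak{F}_{m}$ splits, after a triangular change of variables organized by Hartogs degree of the monomials of $p$, so that the lowest Hartogs-degree monomial $z^{i_0}w^{j_0}$ controls a block whose kernel has dimension exactly $\delta+i_0=\min\{i+j\}+\min\{i\}$, and the higher-degree monomials impose no further independent constraints once $m\ge m_{0}$ for a suitable threshold $m_{0}$ depending on $\deg p$. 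Concretely, I would order the monomials of $p$ by Hartogs degree and argue inductively (peeling off the lowest-degree term, whose action on $\mathfrak{F}_{m}$ is a clean shift by part (i)) that the rank of the full system stabilizes; the claimed $m_{0}$ is where all the ``boundary effects'' (truncations from the $\frac{1}{w}$ and from leaving the index set $\mathcal{I}$) have disappeared. The delicate point is proving that the constraints from the higher-Hartogs-degree monomials, which for moderate $m$ do cut down the dimension further, become redundant for $m\ge m_{0}$ — i.e. they lie in the span of the constraints already imposed by the lowest term. I would handle this by a careful rank computation on the combined matrix, exploiting that distinct Hartogs degrees force the ``shift blocks'' to sit in non-overlapping diagonal bands, so that the combined matrix is block-triangular and its rank is the sum of the individual ranks — but with the crucial refinement that, because $m$ is large, the block coming from $z^{i_0}w^{j_0}$ already saturates all but $\delta+i_0$ of the coordinates, leaving the other blocks acting on a space they annihilate.
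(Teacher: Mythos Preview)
Your treatment of parts (i) and (iii) is essentially correct and matches the paper's approach. One small slip: multiplication by $z^{q}w^{n}$ sends $\frac{1}{w}(\frac{z}{w})^{a}w^{b}$ to $\frac{1}{w}(\frac{z}{w})^{a+q}w^{b+q+n}$, not $\frac{1}{w}(\frac{z}{w})^{a+q}w^{b+n}$, because $z^{q}=(\frac{z}{w})^{q}w^{q}$; the Hartogs-degree shift of $2q+n$ is nonetheless right, so the argument survives. For (iii) note that $M_{p}$ maps $\mathfrak{F}_{m-d}$ \emph{into} $\mathfrak{F}_{m}$ with no truncation at all, and injectivity is automatic since $M_{p}$ is multiplication by a nonzero holomorphic function; your triangularity argument is not needed, and indeed the formula holds already for all $m\ge d$.

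Part (ii) has a genuine gap. First, $i_{0}$ is defined as the smallest $i$ among the monomials achieving $\delta=\min\{i+j\}$, and there is no reason this equals the global $\min\{i:a_{i,j}\ne 0\}$; your equation $\delta+i_{0}=\min\{i+j\}+\min\{i\}$ is unjustified. Second, and more seriously, the claim that the higher-Hartogs-degree monomials impose no further independent constraints is false. Take $p(z,w)=z^{2}w^{3}+w^{100}$: here $\delta=5$, $(i_{0},j_{0})=(2,3)$ is the unique monomial achieving $\delta$ and also has the lowest Hartogs degree. The $z^{2}w^{3}$ constraints force $c_{k}=0$ for $5\le k\le m-2$, leaving a $7$-dimensional kernel; but the $w^{100}$ constraints additionally force $c_{m-1}=c_{m}=0$ for $m$ large, cutting the dimension to $5=\min\{i+j\}+\min\{i\}$. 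So the higher monomial is \emph{not} redundant, and a single monomial cannot control the answer.

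The paper's argument for (ii) is different and simpler. Since $f\in\mathfrak{F}_{m}$ and each monomial of $p$ shifts the $\mathfrak{F}$-index by its own (distinct) Hartogs degree, any choice $g\in\mathfrak{F}_{m'}$ makes exactly one monomial of $p$ contribute to $\langle f,pg\rangle$. Hence each monomial of $p$ can be used independently: the monomial achieving $\min\{i+j\}$ kills $c_{k}$ for $i_{1}+j_{1}\le k\le m-i_{1}$, and the monomial achieving the global $\min\{i\}$ kills $c_{k}$ for $i_{2}+j_{2}\le k\le m-i_{2}$; for $m$ large these two ranges together cover $[\min\{i+j\},\,m-\min\{i\}]$. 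The remaining basis vectors $E^{m}_{m-k}$ with $k<\min\{i+j\}$ or $k>m-\min\{i\}$ lie in $\mathcal{Q}_{p}$ individually, by the same counting as in (i), yielding the stated dimension.
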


\begin{proof}
$\mathrm{(i)}$ Note that $p$ has the Hartogs degree $ 2q+n+1 $ and 
\beqn
p(z, w) = z^qw^n = \frac{1}{w} \left(\frac{z}{w}\right)^{q} w^{q+n+1}, \quad q, n \in \mathbb Z_+.
\eeqn 
	
For $m<2q+n$, it is easy to see that $\mathfrak{F}_m\subseteq \mathcal{Q}_p$. This yields that  
\beqn
\operatorname{dim} (\mathcal{Q}_p \cap \mathfrak{F}_m)=m+1.
\eeqn  
Assume that $ m \geq 2q+n$. Let $f(z, w)=\sum_{j=0}^{m} c_j \frac{1}{w} \left(\frac{z}{w}\right)^{m-j} w^j \in \mathfrak{F}_m\cap\mathcal{Q}_p.$ For $ q+n \leq j \leq m-q $, the function $ g(z, w)=g_j(z, w) = \left(\frac{z}{w}\right)^{m-q-j} w^{j-q-n-1} $ and $p(z, w)=z^qw^n$ in \eqref{Inner-product expansion} yields that $c_j=0$ (since $f \in \mathfrak{F}_m\cap\mathcal{Q}_p$). Therefore, the dimension of $\mathcal{Q}_p\cap\mathfrak{F}_m$ is at most $2q+n$. Note that the smallest possible power of $\frac{z}{w}$ and $w$ in any summand of $$z^qw^ng(z,w)=\frac{1}{w}\left(\frac{z}{w}\right)^{q}w^{q+n+1}g(z,w)$$ 
is $q$ and $q+n$, respectively, for any $g\in\HT$. In contrast, for $j>m-q$, the largest possible power of $\frac{z}{w}$ in any summand of $f(z,w)$ is $q-1.$ Whereas for $j<q+n,$ the largest possible power of $w$ in any summand of $f(z,w)$ is $q+n-1$. This yields that $$ \frac{1}{w} \left(\frac{z}{w}\right)^{m-j} w^j\in \mathcal{Q}_p \cap \mathfrak{F}_m \mbox{ for all } j=0, \ldots, q+n-1, m-q+1,\ldots, m.$$
Therefore, the dimension of $\mathcal{Q}_p\cap \mathfrak{F}_m$ is exactly $2q+n$.
	
$\mathrm{(ii)}$ Let $i_1$ and $j_1$ be integers such that 
\beqn
i_1+j_1=\min\{i+j: a_{i, j} \neq 0 \text{ in } p\}.
\eeqn  
Choose $m \ge 2i_1+j_1.$ Let $f(z, w)=\sum_{k=0}^{m} c_k \frac{1}{w} \left(\frac{z}{w}\right)^{m-k} w^k \in \mathfrak{F}_m\cap\mathcal{Q}_p.$ Then for $ i_1 + j_1 \leq k \leq m - i_1 ,$ choosing $g(z,w)=g_k(z, w)= \left(\frac{z}{w}\right)^{m-i_1-k} w^{k-i_1-j_1-1}$ and $p(z,w)=\sum_{i+j \geq 1} a_{i,j} z^i w^j$ in \eqref{Inner-product expansion}, we obtain $c_{k}=0$ by using the fact no two summands in $p$ have same Hartogs degree.
	
Furthermore, there exists $ i_2 \geq 0 $ such that  
\beqn
i_2 = \min\{i : a_{i, j} \neq 0 \text{ in } p\}.
\eeqn  
Clearly, $ i_2 \leq i_1 $, and it follows that $ c_k = 0 $ for $m-i_1 \le k \le m-i_2$ for a suitable choice of $g$ in \eqref{Inner-product expansion} and for sufficiently large $m$. The conclusion can now be drawn using an argument analogous to that in $\mathrm{(i)}.$
	
$\mathrm{(iii)}$ Let $$p(z,w)=\sum_{k=0}^t b_k z^{q_k}w^{n_k}$$ be a polynomial such that $2q_k+n_k=a$ for all $0 \le k \le t$ and for some $a \in \mathbb N.$ It is easy to see that $\mathfrak{F}_m \subseteq \mathcal{Q}_p$ for $m < a.$ We claim that the dimension of $ \mathcal{Q}_p \cap \mathfrak{F}_m$ is $2q_0+n_0$ when $m \ge q_t+q_0+n_0$. Since $q_k\le q_{k+1}$ for all $0\le k\le t-1,$ we have $n_{k+1}\le n_{k}$ and $q_{k+1}+n_{k+1}\le q_k +n_k$ for all $0 \le k \le t-1.$ Then for any $g \in \HT$ and $0 \le i \le q_t+n_t-1$
\beq \label{Initial independence-1}
\left\langle \frac{1}{w} \left(\frac{z}{w}\right)^{m-i}w^i, p(z, w) g(z, w) \right\rangle=0.
\eeq
Let $f(z, w)=\sum_{k=q_t+n_t}^{m} a_k \frac{1}{w} \left(\frac{z}{w}\right)^{m-k} w^k \in \mathfrak{F}_m\cap\mathcal{Q}_p.$ For $ q_t+n_t \leq j \leq m-q_t $, the function $ g(z, w)=g_j(z, w) = \left(\frac{z}{w}\right)^{m-q_t-j} w^{j-q_t-n_t-1} $ and $p(z,w)=\sum_{k=0}^t b_{k}z^{q_k}w^{n_k}$ in \eqref{Inner-product expansion} gives
\beq \label{dependent equation-1}
\sum_{k=0}^{t} \overline{b_{t-k}}\,a_{q_{t}-q_{t-k}+j} = 0 \quad\text{for}\quad q_t+n_t \leq j \leq m-q_t,
\eeq
where we used the fact that $q_k+n_k-q_t-n_t=q_t-q_k$ for all $k=0, \ldots,t.$ Let $0\le l\le t-1$. Additionally, for $q_{l}+n_{l}\le j\le m-q_{l}$ considering $g(z,w)=g_{j}^{l}(z,w)=\left(\frac{z}{w}\right)^{m-q_{l}-j} w^{j-q_{l}-n_{l}-1}$ in \eqref{Inner-product expansion} further gives
\beq\label{dependent equation-2}
\sum_{k=0}^{t} \overline{b_{t-k}}\,a_{q_{l}-q_{t-k}+j} = 0 \quad\text{for}\quad q_{l}+n_{l}\le j\le m-q_{l},
\eeq
Since $2q_{l}+n_{l}=a$ for all $0\le l\le t$, it follows that the system of equations appearing in \eqref{dependent equation-2} is already present in  \eqref{dependent equation-1} for each $0\le l\le t$. Therefore, it is enough to consider only \eqref{dependent equation-1}. Note that the number of unknowns in \eqref{dependent equation-1} is $m+1-q_{t}-q_0-n_t$. We want to note here that the condition $m\ge q_{t}+q_{0}+n_{0}$ justifies the last argument. If $A$ is the coefficient matrix of the homogeneous system \eqref{dependent equation-1}, then $A$ is already in echelon form of rows. Furthermore, since $b_{k}\neq0$ for all $0\le k\le t$, we have $m+1-2q_t-n_t$ pivot entries in $A$. Hence, the dimension of the solution space will be $q_t-q_0.$ 
	
Finally, for $m-q_{0}< j\le m$ and $g \in \HT,$ 
\beq\label{final independence-1}
\left\langle \frac{1}{w} \left(\frac{z}{w}\right)^{m-j}w^j, p(z, w) g(z, w) \right\rangle=0. 
\eeq
Thus, from \eqref{Initial independence-1}, \eqref{dependent equation-1} and \eqref{final independence-1}, we conclude that dimension of $ \mathcal{Q}_p \cap \mathfrak{F}_m$ is $2q_0+n_0$. This proves the claim.
\end{proof}
Here is an immediate corollary.

\begin{corollary}
The following statements are valid:
\begin{itemize}
\item[$\mathrm{(i)}$] If $p(z)=\sum_{i =1}^{k}a_iz^i$ is a polynomial then 
$$\operatorname{dim}(\mathcal{Q}_p \cap \mathfrak{F}_m) = 2\min\{i : a_{i} \neq 0 \text{ in } p\}$$ for $m \ge 2\min\{i : a_{i} \neq 0 \text{ in } p\}.$
\item[$\mathrm{(ii)}$] If $p(w)=\sum_{i =1}^{k}a_iw^i$ is a polynomial then 
$$\operatorname{dim}(\mathcal{Q}_p\cap \mathfrak{F}_m)=\min\{i:a_{i}\neq 0 \text{ in } p\}$$ for $m \ge \min\{i : a_{i} \neq 0 \text{ in } p\}.$
\item [$\mathrm{(iii)}$] Let $p(z, w) =\sum_{i =1}^{k}a_iz^i + \sum_{j=1}^{l}b_jw^j$ be a polynomial with each summand having a different Hartogs degree and $a_ib_j \neq 0$ for some $i$ and $j.$ Then
$$\operatorname{dim}(\mathcal{Q}_p \cap \mathfrak{F}_m )=\min \{\min\{i : a_{i} \neq 0 \text{ in } p \}, \min\{j : b_{j} \neq 0  \text{ in } p\}\}$$
for $m\ge 2\min\{i : a_{i} \neq 0 \text{ in } p \}+\min\{j : b_{j} \neq 0  \text{ in } p\}$.
\end{itemize}
\end{corollary}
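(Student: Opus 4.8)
The plan is to derive all three formulas from Theorem~\ref{dimensional analysis}(ii). The first step is to observe that the hypothesis of that theorem — no two summands of $p$ share a Hartogs degree — holds in each case: in (i) the monomial $z^i$ has Hartogs degree $2i+1$ and in (ii) the monomial $w^i$ has Hartogs degree $i+1$, so within a single family distinct exponents yield distinct Hartogs degrees, while in (iii) this is precisely the standing assumption. Theorem~\ref{dimensional analysis}(ii) then gives, for all sufficiently large $m$,
\[
\operatorname{dim}(\mathcal{Q}_p\cap\mathfrak{F}_m)=\min\{i+j:a_{i,j}\neq0\}+\min\{i:a_{i,j}\neq0\},
\]
and the three assertions reduce to reading off the two minima. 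In (i) every nonzero coefficient of $p$ sits at a lattice point $(i,0)$, so both minima equal $\min\{i:a_i\neq0\}$; in (ii) every nonzero coefficient sits at $(0,i)$, so the first minimum is $\min\{i:a_i\neq0\}$ and the second is $0$; in (iii) the nonzero coefficients sit at points $(i,0)$ and $(0,j)$, so the first minimum is $\min\{\min\{i:a_i\neq0\},\min\{j:b_j\neq0\}\}$ and the second is $0$.

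It remains to make the threshold on $m$ explicit, which I would do by retracing the proof of Theorem~\ref{dimensional analysis}(ii) in each case, keeping track of which testing functions $g$ are used in \eqref{Inner-product expansion}. Put $d_1=\min\{i:a_i\neq0\}$ and, in (iii), $e_1=\min\{j:b_j\neq0\}$. In (i) the Hartogs-minimal monomial of $p$ is $z^{d_1}$, so in the notation of that proof $(i_1,j_1)=(d_1,0)$ and $i_2=d_1$; the ``gap stage'' $m-i_1\le k\le m-i_2$ is therefore vacuous and only the bound $m\ge 2i_1+j_1=2d_1$ is used. In (ii) one has $(i_1,j_1)=(0,d_1)$ and $i_2=0$, so again there is no gap stage and $m\ge j_1=d_1$ suffices. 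In (iii) with $d_1\le e_1$, the Hartogs-minimal monomial is $z^{d_1}$: the first stage requires $m\ge 2d_1$, and the gap stage is realized by testing $f$ against $p\cdot\left(\frac{z}{w}\right)^{m-k}w^{\,k-1-e_1}$ for $e_1\le k\le m$, which kills the remaining coefficients; if instead $e_1<d_1$, the minimal monomial is $w^{e_1}$, the roles of the two variables are interchanged, and no gap stage is needed. In all cases the uniform bound $m\ge 2d_1+e_1$ — and the simpler bounds $m\ge 2d_1$, $m\ge d_1$ in (i), (ii) — is ample.

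The one point that needs care, and which I expect to be the main (though mild) obstacle, occurs only in case (iii): whenever $f$ is paired with $p\cdot g$ for one of the testing functions above, one must check that the projection of $p\cdot g$ onto $\mathfrak{F}_m$ is a nonzero scalar multiple of a \emph{single} basis vector of $\mathfrak{F}_m$, so that exactly one coefficient $c_k$ is isolated. This could fail only if some surviving $z$-monomial $z^i$ and some surviving $w$-monomial $w^j$ of $p$ were to land in $\mathfrak{F}_m$ simultaneously, which would force $2i=j$ — impossible, since then $z^i$ and $w^j$ would have the common Hartogs degree $2i+1=j+1$. This is exactly the cancellation already invoked in the proof of Theorem~\ref{dimensional analysis}(ii); once it is used, the corollary follows at once.
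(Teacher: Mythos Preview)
Your proposal is correct and follows the paper's approach: the paper presents this as an ``immediate corollary'' of Theorem~\ref{dimensional analysis} with no further argument, and you rightly identify part~(ii) of that theorem as the source, verifying its Hartogs-degree hypothesis in each case and reading off the two minima. Your additional step of retracing the proof to extract the explicit thresholds on $m$ (which Theorem~\ref{dimensional analysis}(ii) leaves implicit as ``there exists $m_0$'') is exactly what is needed to match the stated bounds, and your handling of the potential interference in case~(iii) via the Hartogs-degree condition $2i=j$ is the correct mechanism.
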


It is natural to consider a general polynomial of the form $ p(z,w) = \sum_{j=1}^{k} p_j(z, w) $, where each summand $ p_j $ is a polynomial consisting terms with the same Hartogs degree for every $ 1 \leq j \leq k $. However, obtaining an explicit formula, as in the previous cases, appears to be challenging. The following example illustrates this difficulty.

\begin{example}
Consider the polynomial \beq \label{Exampoly1}
p(z,w)=zw^5+z^2w^3+z^3w^5+z^5w\eeq with components $p_1(z,w)=zw^5+z^2w^3$
 and $p_2(z,w)=z^3w^5+z^5w$. The Hartogs degrees of $p_1$ and $p_2$ are $8$ and $12$, respectively.
	
Assume that $m \ge 30.$ Let $f(z, w)=\sum_{j=0}^{m} a_j \frac{1}{w} \left(\frac{z}{w}\right)^{m-j} w^j \in \mathfrak{F}_m\cap\mathcal{Q}_p.$ The function
\beqn g_j(z, w) = \left(\frac{z}{w}\right)^{m-2-j} w^{j-6} \eeqn
in \eqref{Inner-product expansion} with $p$ as given in \eqref{Exampoly1} leads to the recurrence relation
\beq \label{1}	
a_j + a_{j+1} = 0, \quad 5 \le j \le m-2.
\eeq
Similarly, the function
\beqn g_j(z, w) = \left(\frac{z}{w}\right)^{m-5-j} w^{j-7} \eeqn
in \eqref{Inner-product expansion} with $p$ as given in \eqref{Exampoly1} yields
\beq \label{2}
a_j + a_{j+2} = 0, \quad 6 \le j \le m-5.
\eeq
From \eqref{1}, we obtain $a_j = (-1)^{j+1} a_5$ for $5\le j\le m-1$. Setting $j=6$ in \eqref{2} gives $a_5=0$, leading to the dimension of $\mathcal{Q}_p\cap\mathfrak{F}_m$ being $6$.
	
Next, consider $q(z,w)=q_1(z, w)+q_2(z, w)$, where
$q_1(z, w)=zw^5+z^2w^3$ and $q_2(z, w) = z^5w^6 + z^8.$ Here, the Hartogs degrees of $q_1$ and $q_2$ are $8$ and $17$, respectively.
	
Assume that $m \ge 30.$ Let $f(z, w)=\sum_{j=0}^{m} a_j \frac{1}{w} \left(\frac{z}{w}\right)^{m-j} w^j \in \mathfrak{F}_m\cap\mathcal{Q}_q.$ Choosing the functions
\beqn g_j(z,w)=\left(\frac{z}{w}\right)^{m-2-j} w^{j-6}\quad\text{for}\quad5\le j\le m-2
\eeqn
and 
\beqn 
g_j(z, w) = \left(\frac{z}{w}\right)^{m-8-j} w^{j-9}\quad\text{for}\quad8\le j\le m-8 
\eeqn
in \eqref{Inner-product expansion} we have the relations
\beq \label{3}
a_j+a_{j+1}=0, \quad 5 \leq j \leq m-2
\eeq
and
\beq \label{4}
a_j + a_{j+3} = 0, \quad 8 \leq j \leq m-8,
\eeq
respectively. Finally, these relations, \eqref{3} and \eqref{4}, together imply that the dimension of $\mathcal{Q}_q\cap\mathfrak{F}_m$ is $7$.\eoex
\end{example}

\subsection{Essential normality of $\mathcal{Q}_p$}

\cite[Theorem 1.1]{GuWa2007} provides a characterization of essentially normal quotient modules in $\HD$ for a certain class of polynomials. Here, we present a characterization of essentially normal quotient modules $\mathcal{Q}_p$ of $\HT$, under mild assumptions on the polynomial $p\in\mathbb{C}[z,w]$. In fact, we consider those polynomials $p$ for which
\begin{align}\label{Class-p}
\mathcal Q_p=\bigoplus_{m=0}^{\infty}(\mathfrak F_m\cap \mathcal Q_p).
\end{align}
For $m\in\mathbb{Z}_+,$ let 
\begin{align}\label{Shortcut-Nota}
E_{m-j}^m:=\frac{1}{w}\left(\frac{z}{w}\right)^{m-j}w^j, \quad j=0,1, \ldots, m. 
\end{align} 

We begin with the following elementary observation.
\begin{lemma}\label{Not essentially normal lemma}
Let $p$ be a polynomial and $l\in\mathbb{N}$ such that $E^k_k$ and $E^k_{k-1}$ are in $\mathcal{Q}_{p}$ for all $k\ge l$ $\rm{(}$see \eqref{Shortcut-Nota}$\rm{)}$. Then $\mathcal{Q}_p $ is not essentially normal.
\end{lemma}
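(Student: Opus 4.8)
The plan is to show that the self-commutator $[Q_z^*,Q_z]$ on $\mathcal{Q}_p$ fails to be compact; since essential normality would force every commutator $[Q_\phi^*,Q_\psi]$ ($\phi,\psi\in\{z,w\}$) to be compact, this already gives the conclusion. I would work throughout with the orthonormal basis $\{z^aw^c: a\ge 0,\ a+c+1\ge 0\}$ of $\HT$ (equivalently $\{\tfrac1w(z/w)^iw^j:i,j\ge 0\}$), which is orthonormal because $\Psi$ in \eqref{Psi-map} is unitary and $\Psi(z^aw^c)=w\cdot(zw)^aw^c=z^aw^{a+c+1}$, a monomial of unit norm in $\HD$. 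In this notation $E^m_{m-j}=z^{m-j}w^{2j-m-1}$, so in particular $E^k_k=z^kw^{-k-1}$, $E^k_{k-1}=z^{k-1}w^{-(k-1)}$, and $E^{k+2}_{k+1}=z^{k+1}w^{-k-1}$.

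The first step is to record the action of $M_z$ and $M_z^*$ on this basis. Multiplication by $z$ sends $z^aw^c\mapsto z^{a+1}w^c$, which is again a basis vector, so $M_z$ is an isometry; dualizing, $M_z^*(z^aw^c)=z^{a-1}w^c$ whenever $a\ge 1$ and $c\ge -a$, while $M_z^*(z^aw^c)=0$ when $a=0$ or $c=-a-1$ (the ``corner'' cases sitting on the boundary of the index set $\mathcal I$). Since $\mathcal{Q}_p$ is a quotient module, it is invariant under $M_z^*$, and hence $Q_z^*=M_z^*\big|_{\mathcal{Q}_p}$, while $Q_z=P_{\mathcal{Q}_p}M_z\big|_{\mathcal{Q}_p}$.

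Next, fix $k\ge l$, so that $E^k_k\in\mathcal{Q}_p$ by hypothesis. Because $E^k_k=z^kw^{-k-1}$ sits at a corner ($c=-a-1$), we get $M_z^*E^k_k=0$, hence $Q_z^*E^k_k=0$ and so $Q_zQ_z^*E^k_k=0$. On the other hand $M_zE^k_k=z^{k+1}w^{-k-1}=E^{k+2}_{k+1}$, and $E^{k+2}_{k+1}=E^{k'}_{k'-1}$ with $k'=k+2\ge l$, so by hypothesis $E^{k+2}_{k+1}\in\mathcal{Q}_p$ and therefore $Q_zE^k_k=P_{\mathcal{Q}_p}M_zE^k_k=E^{k+2}_{k+1}$. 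Applying $M_z^*$ to $z^{k+1}w^{-k-1}$ (here $c=-a$, not a corner, and $a=k+1\ge 1$) yields $Q_z^*Q_zE^k_k=z^kw^{-k-1}=E^k_k$. Combining the two computations, $[Q_z^*,Q_z]E^k_k=E^k_k$ for every $k\ge l$.

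Finally, $\{E^k_k\}_{k\ge l}$ is an orthonormal sequence contained in $\mathcal{Q}_p$, so it converges weakly to $0$, yet $\|[Q_z^*,Q_z]E^k_k\|=1$ for all $k\ge l$. Since a compact operator sends weakly null sequences to norm null sequences, $[Q_z^*,Q_z]$ is not compact, and consequently $\mathcal{Q}_p$ is not essentially normal. The only point demanding care is the bookkeeping of exactly when $M_z^*$ annihilates a basis vector versus shifts it down — i.e., which monomials lie on the boundary of $\mathcal I$ — together with the identification $M_zE^k_k=E^{k+2}_{k+1}$, which is precisely where the hypothesis ``$E^k_{k-1}\in\mathcal{Q}_p$ for all $k\ge l$'' is used; everything else is a short direct calculation.
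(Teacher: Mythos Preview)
Your proof is correct and follows essentially the same route as the paper: both compute $Q_zE^k_k=E^{k+2}_{k+1}$, $Q_z^*E^k_k=0$, and $Q_z^*E^{k+2}_{k+1}=E^k_k$ to obtain $[Q_z^*,Q_z]E^k_k=E^k_k$ for all $k\ge l$, whence $[Q_z^*,Q_z]$ is not compact. The paper additionally verifies that $[Q_w^*,Q_z]$ is not compact, but as you note this is unnecessary for the conclusion; your weakly-null-to-norm-null justification for non-compactness is in fact cleaner than the paper's.
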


\begin{proof}
Let $k\ge l$. First, note that  
\beqn
Q_z E^k_k=P_{\mathcal{Q}_{p}}M_{z} \left[\frac{1}{w} \left(\frac{z}{w}\right)^{k}\right]=P_{\mathcal{Q}_{p}}\left[\frac{1}{w} \left(\frac{z}{w}\right)^{k+1} w \right]= E^{k+2}_{k+1}.
\eeqn  
Similarly, 
\beqn
Q_z^* E^k_k=P_{\mathcal{Q}_{p}} M_{z}^* \left[\frac{1}{w} \left(\frac{z}{w}\right)^{k} \right]= 0.
\eeqn  
Also,  
\beqn
Q_z^* E^{k+2}_{k+1}=P_{\mathcal{Q}_{p}}M_{z}^* \left[\frac{1}{w} \left(\frac{z}{w}\right)^{k+1} w \right]= E^{k}_{k}.
\eeqn  
It follows that  
\beq \label{Q_z not compact}
[Q_z^*,Q_z] E^k_k=Q_z^* Q_z E^k_k-Q_z Q_z^* E^k_k= Q_z^*E^{k+2}_{k+1}=E^k_k.
\eeq 
Since $ E^k_k \neq 0 $ for all $ k \ge l$ and linearly independent, $[Q_z^*,Q_z]$ is not a compact operator. Note that $Q_wE_k^k=E^{k+1}_k,$ $Q_w^*E_k^k=0.$ It is easy to see that $$(Q_w^*Q_z-Q_zQ_w^*)E_k^k=Q_w^*E^{k+2}_{k+1}-0=E^{k+1}_{k+1}.$$ 
Hence, $[Q_w^*, Q_z]$ is also not compact. Therefore, $\mathcal{Q}_p$ is not essentially normal.
\end{proof}

\begin{remark}
From the proof of Theorem \ref{dimensional analysis}, it follows that for monomials of the form $p(z,w)=z^n$ (for $n\ge2$), $p(z,w)=w^m$ (for $m\ge2$), or  $p(z,w)=z^m w^n$ (for $m,n\ge 1 $), we observe that $E^k_k,E^k_{k-1}\in\mathcal{Q}_p$ for $k \ge 1$. Hence by Lemma \ref{Not essentially normal lemma}, $\mathcal{Q}_p $ is not essentially normal. \eof
\end{remark}
As a consequence, we get the following.
\begin{proposition} \label{a=b=0case}
Let $p$ be a polynomial in the variables $z$ and $w$ represented as 
\beq \label{thepolynomial}
p(z,w)=a z+ b w + cw^2 +w^3q(w) +z^2r(z)+\sum_{i,j \ge 1} a_{i,j} z^i w^j,
\eeq
where $a, b, c$ are scalars, and $q(w)$ and $r(z)$ are single variable polynomials. If $a=b=0$, then the quotient module $\mathcal{Q}_p$ is not essentially normal.
\end{proposition}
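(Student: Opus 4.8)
The plan is to deduce the proposition from Lemma~\ref{Not essentially normal lemma}: it suffices to produce an $l\in\mathbb N$ with $E^k_k,E^k_{k-1}\in\mathcal Q_p$ for all $k\ge l$, and I will show this works with $l=1$ whenever $a=b=0$.

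First I would isolate the only role played by the hypothesis. When $a=b=0$,
$$p(z,w)=cw^2+w^3q(w)+z^2r(z)+\sum_{i,j\ge1}a_{i,j}z^iw^j,$$
and inspecting the remaining groups of terms shows that every monomial $z^iw^j$ actually occurring in $p$ has $i\ge0$, $j\ge0$, and $i+j\ge2$; in particular $p$ has no term of degree $\le1$. (Conversely, a nonzero $az$ or $bw$ term would push $E^k_{k-1}$ outside $\mathcal Q_p$, consistent with the other half of Theorem~\ref{Not essentially normal Theorem}.)

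Next I would recast membership in $\mathcal Q_p=(p\,\HT)^{\perp}$ as orthogonality against the monomial basis. Recall that $\{z^{\alpha}w^{\beta}:(\alpha,\beta)\in\mathcal I\}$ is an orthonormal basis of $\HT$, with $\mathcal I$ as in the introduction, and that for such a basis vector $z^{a}w^{b}$ the product $p\,z^{a}w^{b}$ is a finite sum of monomials $z^{a+i}w^{b+j}$ with $z^iw^j$ a term of $p$; each of these again has index in $\mathcal I$ (since $a+i\ge0$ and $(a+i)+(b+j)+1=(a+b+1)+(i+j)\ge0$), so it lies in $\HT$ and the pairing is the coordinatewise one. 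Thus $f\in\mathcal Q_p$ iff $\langle f,\,p\,z^{a}w^{b}\rangle=0$ for every $(a,b)\in\mathcal I$. Writing $E^k_k=z^kw^{-k-1}$ and $E^k_{k-1}=z^{k-1}w^{1-k}$, a term $z^iw^j$ of $p$ can contribute to $\langle E^k_k,\,p\,z^{a}w^{b}\rangle$ only when $a+i=k$ and $b+j=-k-1$, which forces $a+b+1=-(i+j)\le-2<0$, contradicting $(a,b)\in\mathcal I$; hence $E^k_k\in\mathcal Q_p$. Likewise a term of $p$ can contribute to $\langle E^k_{k-1},\,p\,z^{a}w^{b}\rangle$ only when $a+i=k-1$ and $b+j=1-k$, forcing $a+b+1=1-(i+j)\le-1<0$, again impossible; hence $E^k_{k-1}\in\mathcal Q_p$. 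This holds for all $k\ge1$.

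Finally, applying Lemma~\ref{Not essentially normal lemma} with $l=1$ shows that $\mathcal Q_p$ is not essentially normal; equivalently, one may cite the computation in the Remark following that lemma, which needs exactly that $p$ has no term of degree $\le1$. I do not anticipate a genuine obstacle here — the argument is an index count — and the only point needing care is verifying that the two constraints ``$(a,b)\in\mathcal I$'' and ``$z^{a}w^{b}$ contributes to the pairing with $E^k_k$ (resp.\ with $E^k_{k-1}$)'' are jointly impossible, i.e.\ that $a+b+1$ indeed comes out strictly negative once $p$ is free of degree $\le1$ terms. Note that hypothesis~\eqref{Class-p} is not used in this direction.
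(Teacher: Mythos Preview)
Your proposal is correct and follows essentially the same approach as the paper: observe that when $a=b=0$ every monomial of $p$ has total degree at least $2$, verify that $E^k_k$ and $E^k_{k-1}$ lie in $\mathcal Q_p$ for all $k\ge1$, and invoke Lemma~\ref{Not essentially normal lemma}. The paper simply asserts the orthogonality $\langle E^k_k,\,pg\rangle=\langle E^k_{k-1},\,pg\rangle=0$ as ``easy to see,'' whereas you supply the index-count argument making this explicit; the strategies are otherwise identical.
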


\begin{proof}
If $a=b=0$, then the polynomial $p$ given in \eqref{thepolynomial} can be represented as 
\beqn
p(z,w)=\sum_{i+j \geq 2} a_{i,j} z^i w^j.
\eeqn  
It is easy to see that for $k\ge 1$,
\beqn
\langle E^k_k, p(z,w) g(z, w) \rangle = \langle E^k_{k-1}, p(z,w) g(z, w) \rangle = 0 \quad \forall~g \in \HT.
\eeqn  
Hence, by Lemma \ref{Not essentially normal lemma}, $ \mathcal{Q}_p $ is not essentially normal.
\end{proof}

We now turn to the case of quotient modules $\mathcal{Q}_{p}$ that are essentially normal.

\begin{theorem} \label{Essential normal theorem}
Let $p$ be a polynomial in the variables $z$ and $w$ given by \eqref{thepolynomial} such that $\mathcal{Q}_{p}$ satisfies \eqref{Class-p}. Then the following statements are valid.
\begin{itemize}
\item [$\mathrm{(i)}$] If $b=1$ then the quotient module $\mathcal Q_p$ is essentially normal.
\item [$\mathrm{(ii)}$] If $a=1, b=c=0$ then the quotient module $\mathcal Q_p$ is essentially normal.
\item [$\mathrm{(iii)}$] If $a=c=1, b=0$ then the quotient module $\mathcal Q_p$ is essentially normal.
\end{itemize}
\end{theorem}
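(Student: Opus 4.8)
The plan is to leverage the orthogonal decomposition $\mathcal{Q}_p=\bigoplus_{m\ge 0}V_m$, where $V_m:=\mathfrak{F}_m\cap\mathcal{Q}_p$ is finite dimensional, supplied by \eqref{Class-p}. Writing $E^m_k:=\frac1w(z/w)^k w^{m-k}$ for $0\le k\le m$ (so that $E^m_k$ is the vector $E^m_{m-j}$ of \eqref{Shortcut-Nota} with $j=m-k$), the first task is to record, by the same monomial computation as in the proof of Lemma \ref{Not essentially normal lemma}, the identities $M_zE^m_k=E^{m+2}_{k+1}$, $M_wE^m_k=E^{m+1}_k$, $M_z^*E^m_k=E^{m-2}_{k-1}$ for $1\le k\le m-1$ and $M_z^*E^m_k=0$ for $k\in\{0,m\}$, and $M_w^*E^m_k=E^{m-1}_k$ for $0\le k\le m-1$ and $M_w^*E^m_m=0$. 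Since $Q_z,Q_z^*,Q_w,Q_w^*$ then shift the grading $\bigoplus_m V_m$ by at most $2$, every commutator $[Q_\phi^*,Q_\psi]$ is block banded, hence compact as soon as $\|[Q_\phi^*,Q_\psi]|_{V_m}\|\to 0$ as $m\to\infty$. So the problem reduces to: (a) identify $V_m$ for all large $m$, and (b) compute $Q_z,Q_w$ on it up to compact operators.

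For (a) I would run the test \eqref{Inner-product expansion} graded by Hartogs weight: for $f=\sum_kc_kE^m_k$, pairing with a monomial $z^\mu w^\nu$ of $p$ against a basis vector of Hartogs degree $m-2\mu-\nu$ gives a linear relation among the $c_k$ involving only the indices $c_{\alpha_1+\mu}$ over the weight-$(2\mu+\nu)$ terms of $p$; in particular $c_m$ never appears, so $E^m_m\in\mathcal{Q}_p$ always. In case (i), $w$ is the unique lowest-weight term (weight $1$), and its relations force $c_0=\cdots=c_{m-1}=0$, so $V_m=\mathbb{C}E^m_m$ for every $m$. In case (ii) there is no weight-$1$ term and $z$ is the unique weight-$2$ term, so its relations force $c_1=\cdots=c_{m-1}=0$, while the higher-weight relations from the monomials of $w^3q(w)$ force $c_0=0$ unless $q\equiv 0$; hence for large $m$ either $V_m=\mathbb{C}E^m_m$, or, if $q\equiv 0$, $V_m=\mathbb{C}E^m_0\oplus\mathbb{C}E^m_m$. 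In case (iii) the two weight-$2$ terms $z$ and $w^2$ have equal coefficient, so $c_k=(-1)^kc_0$ for $0\le k\le m-1$, whence $V_m\subseteq\mathbb{C}v^m\oplus\mathbb{C}E^m_m$ with $v^m:=\sum_{k=0}^{m-1}(-1)^kE^m_k$; the remaining higher-weight relations are satisfied by $v^m$ exactly when $p(-w^2,w)\equiv 0$, and otherwise kill the $v^m$-component. (These dimension stabilizations run parallel to Subsection \ref{Dim analysis}.)

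For (b): whenever $V_m$ is eventually $\mathbb{C}E^m_m$ — case (i) always, and cases (ii), (iii) in the degenerate subcase — the identities above give $M_zE^m_m=E^{m+2}_{m+1}\perp E^{m+2}_{m+2}$ and $M_wE^m_m=E^{m+1}_m\perp E^{m+1}_{m+1}$, so $Q_z=Q_w=0$ there and $\mathcal{Q}_p$ is trivially essentially normal. When $q\equiv 0$ in case (ii) one finds on the tail $Q_wE^m_0=E^{m+1}_0$, $Q_wE^m_m=0$, $Q_zE^m_0=Q_zE^m_m=0$, so modulo finite rank $Q_z=0$ and $Q_w$ is the unilateral shift on $\overline{\operatorname{span}}\{E^m_0\}$ direct sum the zero operator on $\overline{\operatorname{span}}\{E^m_m\}$ — essentially normal. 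When $p(-w^2,w)\equiv 0$ in case (iii), using $\|v^m\|^2=m$ and projecting onto $V_{m+1},V_{m+2}$ shows that on the normalized vectors $\hat v^m:=v^m/\sqrt m$ one has $Q_w\hat v^m=\sqrt{m/(m+1)}\,\hat v^{m+1}+r_m$ and $Q_z\hat v^m=-\sqrt{m/(m+2)}\,\hat v^{m+2}+r'_m$ with $\|r_m\|,\|r'_m\|=O(m^{-1/2})$, while $Q_wE^m_m$ and $Q_zE^m_m$ are $O(m^{-1/2})$ multiples of $\hat v^{m+1}$ and $\hat v^{m+2}$; hence modulo compacts $Q_w\equiv S\oplus 0$ and $Q_z\equiv -S^2\oplus 0$ for the unilateral shift $S$ on $\overline{\operatorname{span}}\{\hat v^m\}$, and since $S^*S=I$ and $I-SS^*$ is finite rank, all polynomials in $S$ are essentially normal and essentially commute, so $\mathcal{Q}_p$ is essentially normal in this case too.

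The main obstacle is step (a) in case (iii): showing that $V_m$ is exactly $\mathbb{C}v^m\oplus\mathbb{C}E^m_m$ (or $\mathbb{C}E^m_m$) for all large $m$, i.e. carrying the weight recursions past the edge effects at $k=0$ and $k=m$ and verifying the dichotomy governed by $p(-w^2,w)$; and, in the surviving subcase, the bookkeeping in step (b), where one must confirm that the off-diagonal parts of $Q_z,Q_w$ — the maps onto $\overline{\operatorname{span}}\{E^m_m\}$ and the weight defects $\sqrt{m/(m+1)}-1$ — genuinely become compact after normalizing the $v^m$. Everything past that reduces to the elementary fact that a weighted unilateral shift whose weights converge to a constant is essentially normal.
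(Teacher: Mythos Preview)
Your approach is the paper's: exploit the graded decomposition \eqref{Class-p}, use the pairing \eqref{Inner-product expansion} to determine $V_m=\mathcal{Q}_p\cap\mathfrak{F}_m$ for large $m$, and then read off essential normality from the resulting structure. Your identification of $V_m$ in case~(i) and in the generic parts of cases~(ii) and~(iii) matches the paper verbatim; the paper then closes each case with ``it is easy to see that $\mathcal{Q}_p$ is essentially normal,'' without the explicit operator verification you sketch in step~(b).

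Where you go further is in the tail dichotomy. The paper, in cases~(ii) and~(iii), tacitly assumes $q\not\equiv 0$, picks the least degree $k_0$ of $q$, and asserts that the constraint coming from the $w^{k_0+3}$ monomial kills the last free coefficient, forcing $\dim V_m=1$ eventually. Your criterion $p(-w^2,w)\equiv 0$ in case~(iii) is sharper and actually necessary: for $p(z,w)=z+w^2+zw+w^3$ one has $q\equiv 1\not\equiv 0$, yet $p(-w^2,w)=0$, so the Hartogs-degree-$3$ constraint is vacuous, $v^m$ survives, and $\dim V_m=2$ for all large $m$ --- a situation the paper's argument as written does not address. Your weighted-shift computation on $\{\hat v^m,E^m_m\}$ then handles this correctly (in fact $Q_w\hat v^m=\sqrt{m/(m+1)}\,\hat v^{m+1}$ and $Q_z\hat v^m=-\sqrt{m/(m+2)}\,\hat v^{m+2}$ hold exactly, with $r_m=r'_m=0$), and the analogous remark applies to the $q\equiv 0$ subcase of~(ii). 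So your plan is sound; the ``main obstacle'' you flag is real but routine, and the extra care you take in step~(b) patches a genuine gap in the paper's proof rather than merely adding detail.
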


\begin{proof}
$\mathrm{(i)}$ Let $b=1$ in $\eqref{thepolynomial}.$ Then the polynomial $p$ takes the form 
\beq\label{Case:b=1}
p(z,w) = az+w+\sum_{i+j \ge 2} a_{i,j} z^i w^j.
\eeq
Note that the Hartogs degree of $w$ will never be equal to the Hartogs degree of any summand in $az +\sum_{i+j \ge 2}a_{i,j}z^i w^j$. Let $f(z, w)=\sum_{k=0}^{m} a_k \frac{1}{w} \left(\frac{z}{w}\right)^{m-k} w^k \in \mathcal{Q}_p\cap \mathfrak{F}_m.$ For $1\le k\le m $, selecting $ g_k(z, w) =\left(\frac{z}{w}\right)^{m-k} w^{k-2}$ and $p(z,w)$ (as given in \eqref{Case:b=1}) in \eqref{Inner-product expansion} yields that $a_k=0$.  
On the other hand, for any $g \in \HT,$
\beqn
\left\langle \frac{1}{w} \left(\frac{z}{w}\right)^{m}, \left(az+w+\sum_{i+j \geq 2} a_{i,j} z^i w^j\right) g(z, w) \right\rangle=0.
\eeqn
This further implies 
$$\mathcal{Q}_p\cap \mathfrak{F}_m=\overline{\text{span}}\left\{\frac{1}{w} \left(\frac{z}{w}\right)^{m}\right\}=\overline{\text{span}}\left\{E_m^m\right\}\quad\text{for}~m\ge0.$$

It is now easy to see that $\mathcal{Q}_p$ is essentially normal.
	
$\mathrm{(ii)}$ Let $a=1, b=c=0$ in $\eqref{thepolynomial}.$ Then the polynomial $p$ takes the form
\beq\label{Case:a=1,b=c=0}
p(z,w)=z+w^3q(w)+z^2r(z)+\sum_{i,j\ge1}a_{i,j}z^{i}w^{j}.
\eeq
For all $m\ge0$ and for any $g \in \HT,$ we have
\beqn
\left\langle E^m_m, p(z, w)g(z, w) \right\rangle=0,
\eeqn 
it follows that $E^{m}_{m}\in\mathcal{Q}_{p}\cap\mathfrak{F}_{m}$ for all $m\ge0$. Moreover, $E^{1}_{0}\in\mathcal{Q}_{p}\cap\mathfrak{F}_{1}$. We observe that the Hartogs degree of $z$ will never be equal to the Hartogs degree of any summand in $w^3q(w)+z^2r(z)+\sum_{i,j \ge 1} a_{i,j} z^i w^j.$ Let $f(z, w)=\sum_{k=0}^{m} a_k \frac{1}{w} \left(\frac{z}{w}\right)^{m-k} w^k \in \mathcal{Q}_p\cap \mathfrak{F}_m$ and $m\ge2$. For $ 1 \leq k \leq m-1 $, choosing $g_k(z, w)=\left(\frac{z}{w}\right)^{m-k-1} w^{k-2} $ and $p(z,w)$ (as in \eqref{Case:a=1,b=c=0}) in \eqref{Inner-product expansion} we obtain $a_k=0$. It now remains to verify whether $E_{0}^{m}\in\mathcal{Q}_{p}\cap\mathfrak{F}_{m}$ or not for $m \ge 2$. Let $k\in\mathbb{N}\cup\{0\}$ such that $$q(w)=\sum_{j=k}^{\operatorname{deg}q}c_jw^j,$$ 
where $c_k\neq0.$ Therefore, if $m<k+3$, we obtain
\beqn
\left\langle E^m_0, p(z, w) g(z, w) \right\rangle=0
\eeqn 
for all $g\in\HT$. For $m\ge k+3$, we obtain $a_{m}=0.$ Consequently
\begin{align}\label{QpFm-ii}
\mathcal{Q}_p \cap \mathfrak{F}_m=\begin{cases}
\overline{\text{span}}\left\{E^m_m,E^{m}_{0}\right\} &\text{for}~0\le m<k+3,\\
\overline{\text{span}}\left\{E^m_m\right\} &\text{for}~m\ge k+3.
\end{cases}    
\end{align}
It is now easy to see that the corresponding quotient module $\mathcal{Q}_p$ is essentially normal.

$\mathrm{(iii)}$ Let $a=c=1, b=0.$ Then the polynomial $p$ given in \eqref{thepolynomial} takes the form 
\begin{align}\label{Case:a=c=1,b=0}
p(z,w)=z+ w^2 +w^3q(w) +z^2r(z)+\sum_{i,j \ge 1} a_{i,j} z^i w^j.
\end{align}
Note that the Hartogs degree of $z$ and $w^2$ is $3.$ However, the Hartogs degree of any summand in $w^3q(w) +z^2r(z)+\sum_{i,j \ge 1} a_{i,j} z^i w^j$ is at least $4$.

For $m\ge0$ and for any $g \in \HT,$ we have
\beqn
\left\langle E^m_m, p(z, w)g(z, w) \right\rangle=0.
\eeqn 
Thus $E^{m}_{m}\in\mathcal{Q}_{p}\cap\mathfrak{F}_{m}$ for all $m\ge0$. Moreover, $E^{1}_{0}\in\mathcal{Q}_{p}\cap\mathfrak{F}_{1}$.

Let $m\ge2$, and suppose 
\begin{align*}
f(z,w)=\sum_{i=1}^{m}a_{i}\frac{1}{w}\left(\frac{z}{w}\right)^{m-i}w^{i}\in\mathcal{Q}_{p}\cap\mathfrak{F}_{m}.
\end{align*}
For $1\leq k\leq m-1$, take $g_k(z,w)=\left(\frac{z}{w}\right)^{m-k-1}w^{k-2}$ and $p$ (as in \eqref{Case:a=c=1,b=0}) in \eqref{Inner-product expansion}, We obtain 
\begin{align*}
a_k+a_{k+1}=0,
\end{align*} 
which further yields that $a_k=(-1)^{k+1}a_1$ for $k=1,\ldots,m$. 

Let $k\in\mathbb{N}\cup\{0\}$ such that $q(w)=\sum_{j=k}^{\operatorname{deg}q}c_jw^j,$ where $c_k\neq0.$ Then $a_{m}=0$ for $m\ge k+3$, which implies
$$\mathcal{Q}_p \cap \mathfrak{F}_m=\overline{\text{span}}\left\{E^m_m\right\}\quad\text{for}~m\ge k+3.$$
For $0\le m< k+3$, it is possible that $a_{1}=0$ or $a_{1}\neq0,$ depending on the choice of the polynomial $r(z)$ and the coefficient $a_{i,j}$. Therefore, 
$$\mathcal{Q}_{p}\cap\mathfrak{F}_{m}\subseteq\overline{\text{span}}\Big\{E_{m}^{m},a_{1}\sum_{k=1}^{m}(-1)^{k+1} E_{m-k}^m\Big\}\quad\text{for}~0\le m< k+3.$$
It is now easy to see that the corresponding quotient module $\mathcal{Q}_p$ is essentially normal.
This completes the proof.
\end{proof}

As a consequence of Proposition \ref{a=b=0case} and Theorem \ref{Essential normal theorem}, we establish Theorem \ref{Not essentially normal Theorem}.

\begin{proof}[Proof of Theorem \ref{Not essentially normal Theorem}]
Let $p$ be as given in \eqref{thepolynomial}. Then it follows from Theorem \ref{Essential normal theorem} and Proposition \ref{a=b=0case} that $\mathcal{Q}_p$ is not essentially normal if and only if $a=b=0$.
\end{proof}
\begin{remark}
Interestingly, when $p(z,w)=(z-w)^2$, the quotient module $H^{2}(\mathbb{D}^{2})\ominus pH^{2}(\mathbb{D}^{2})$ is essentially normal in the Hardy space of the bidisc (see \cite[Section~5]{DM1993}). However, in the case of the Hardy space of the Hartogs triangle, $\HT\ominus p\HT$ is not. This contrast highlights a key difference between these two function spaces.\eof
\end{remark}

We now present a few examples where equation \eqref{Class-p} fails, as well as examples where it holds.

\begin{example}
\begin{enumerate}[{\normalfont(i)}]
\item Consider the polynomials, $z^{m},w^{n},z^{m}w^{n}$ and $(z-w)^2,$ where $m,n\in\mathbb{N}$. Then $\mathcal{Q}_{p}$ satisfies \eqref{Class-p}.
\item Consider the polynomial $p(z,w)=w-\frac{1}{2}$. Observe that the Hartogs degree of the constant term is $1$, which differs from the Hartogs degree of $w$. This, in turn, implies that $\mathcal{Q}_{p}\cap\mathfrak{F}_{m}=\{0\}$ for all $m\ge0.$ However, $\sum_{i=0}^{\infty}\frac{1}{2^{i}}\frac{1}{w}w^{i}\in\mathcal{Q}_{p}.$\eoex
\end{enumerate}
\end{example}

We end this section with an application of Theorem \ref{Essential normal theorem}. Recall that a quotient module $\HD\ominus p\HD$ is doubly commuting for a polynomial $p$ if and only if $p\HD=G\HD$ for some finite Blaschke product $G$ depending on a single variable (see \cite[Corollary 4.3]{DY2000}). In contrast, in $\HT$, there exist doubly commuting quotient modules $\mathcal{Q}_{p}=\HT\ominus p\HT$ for which $p$ is not necessarily an inner function.

\begin{proposition}\label{dcpolycase}
Let $p(z,w)=a z+ b w + cw^2 +w^3q(w) +z^2r(z)+\sum_{i,j \ge 1} a_{i,j} z^i w^j$ such that $\mathcal{Q}_p$ satisfies \eqref{Class-p}. If either $b \neq 0$ or $a \neq0$ and $b=c=0$, then the quotient module $\mathcal{Q}_p$ is doubly commuting.
\end{proposition}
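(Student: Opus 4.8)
The plan is to combine the explicit descriptions of $\mathcal{Q}_p\cap\mathfrak{F}_m$ obtained inside the proof of Theorem~\ref{Essential normal theorem} with the observation that, in each of the two listed cases, one of the two compressed operators $Q_w$, $Q_z^*$ vanishes identically on $\mathcal{Q}_p$, so that $[Q_z^*,Q_w]$ is trivially zero. First I would reduce to a normalized form: since $\mathcal{Q}_{\lambda p}=\mathcal{Q}_p$ for every nonzero scalar $\lambda$, and multiplying $p$ by a nonzero constant preserves both the shape \eqref{thepolynomial} and the hypothesis \eqref{Class-p}, I may assume $b=1$ in the first case and $a=1$ (keeping $b=c=0$) in the second. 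Throughout I will use the transference identities $\Psi(E^m_m)=z^m$ and $\Psi(E^m_0)=w^m$ (immediate from $\Psi(f)(z,w)=w\,f(zw,w)$ and \eqref{Shortcut-Nota}), the relations $M_z^*=\Psi^{-1}M_2^*M_1^*\Psi$ and $M_w^*=\Psi^{-1}M_2^*\Psi$ coming from \eqref{Diagram}, and the fact that $Q_z^*=M_z^*|_{\mathcal{Q}_p}$, $Q_w^*=M_w^*|_{\mathcal{Q}_p}$ since $\mathcal{Q}_p$ is a quotient module.

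If $b\neq0$, then after normalization $p(z,w)=az+w+\sum_{i+j\ge2}a_{i,j}z^iw^j$, and the proof of part~$\mathrm{(i)}$ of Theorem~\ref{Essential normal theorem}, together with \eqref{Class-p}, gives $\mathcal{Q}_p=\overline{\operatorname{span}}\{E^m_m:m\ge0\}$. Since $M_2^*z^m=0$ (the monomial has no $w$), I obtain $M_w^*E^m_m=\Psi^{-1}M_2^*\Psi(E^m_m)=0$ for every $m$, hence $Q_w^*=M_w^*|_{\mathcal{Q}_p}=0$ and therefore $Q_w=0$. Thus $[Q_z^*,Q_w]=0$, so $\mathcal{Q}_p$ is doubly commuting.

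If instead $a\neq0$ and $b=c=0$, then after normalization $p(z,w)=z+w^3q(w)+z^2r(z)+\sum_{i,j\ge1}a_{i,j}z^iw^j$, and the proof of part~$\mathrm{(ii)}$ of Theorem~\ref{Essential normal theorem} shows $\mathcal{Q}_p\cap\mathfrak{F}_m\subseteq\overline{\operatorname{span}}\{E^m_m,E^m_0\}$ for every $m\ge0$; by \eqref{Class-p} this yields $\mathcal{Q}_p\subseteq\overline{\operatorname{span}}\{E^m_m,E^m_0:m\ge0\}$. Since $M_1^*z^m=z^{m-1}$ (with $M_1^*1=0$) and $M_1^*w^m=0$ both lie in $\ker M_2^*$, the operator $M_z^*=\Psi^{-1}M_2^*M_1^*\Psi$ annihilates every $E^m_m$ and every $E^m_0$. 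Hence $Q_z^*=M_z^*|_{\mathcal{Q}_p}=0$, and consequently $[Q_z^*,Q_w]=0$, so $\mathcal{Q}_p$ is doubly commuting.

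There is no genuine obstacle in this argument; the one point needing care is that the structure of $\mathcal{Q}_p\cap\mathfrak{F}_m$ recorded in Theorem~\ref{Essential normal theorem} is unaffected by the normalization of $p$, which is automatic from $\mathcal{Q}_{\lambda p}=\mathcal{Q}_p$. For orientation, this is why the remaining case $b=0$, $c\neq0$ of \eqref{thepolynomial} (case $\mathrm{(iii)}$ of Theorem~\ref{Essential normal theorem}) is excluded: there $\mathcal{Q}_p\cap\mathfrak{F}_m$ contains nontrivial vectors $\sum_{k=1}^m(-1)^{k+1}E^m_{m-k}$, so neither $Q_w$ nor $Q_z^*$ vanishes and doubly commutativity can fail.
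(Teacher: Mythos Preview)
Your proposal is correct and follows essentially the same route as the paper: both invoke the explicit descriptions of $\mathcal{Q}_p\cap\mathfrak{F}_m$ from Theorem~\ref{Essential normal theorem} (parts~(i) and~(ii)) and then observe that doubly commutativity follows immediately. Where the paper simply says ``trivially implies doubly commutativity,'' you supply the mechanism explicitly via the transference $\Psi$, showing $Q_w=0$ in the first case and $Q_z^*=0$ in the second, which is a clean way to cash out the word ``trivially.''
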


\begin{proof}
Consider the case when $b\neq0$. Then, by Theorem \ref{Essential normal theorem} (i), we have $\mathcal{Q}_{p}\cap\mathfrak{F}_{m}=\overline{\text{span}}\{E_{m}^{m}\}$ for $m\ge0$. This trivially implies the doubly commutativity. 
	
Now consider the case $b=0$, $a\neq0$ and $c=0$, then by Theorem \ref{Essential normal theorem} (ii) and \eqref{QpFm-ii}, we again have the doubly commutativity.
\end{proof}

\section{Concluding remarks}
In this article, we introduced the concept of $\varphi$-doubly commuting quotient modules of $\HD$ (see Definition \ref{phi-dc}) to classify the doubly commuting quotient modules of $\HT$ (see Theorem \ref{Hartogscharasepa}). Although the present approach does not provide a complete classification of doubly commuting quotient modules of $\HT$, it establishes a connection between the doubly commuting quotient modules of $\HD$ and those of $\HT$.
 
We wrap up this paper with the following natural question for future investigation:

\begin{center}
\emph{Characterize all doubly commuting quotient modules of $\HT$.}
\end{center}

\vspace*{0.5cm}

\noindent\textbf{Acknowledgement:}
A. Chattopadhyay is supported by the Core Research Grant (CRG), File No: CRG/2023/004826, from SERB. S. Giri is grateful to the Ministry of Education, Government of India, for financial support through the Prime Minister's Research Fellowship (PMRF-ID: 1902164) grant. S. Jain is supported by SERB project (Ref No. MATHSPNSERB01119xARC002).


\begin{thebibliography}{9} 

\bibitem{BCL1978} 
C. A. Berger, L. A. Coburn and A. Lebow, Representation and index theory for $C\sp*$-algebras generated by commuting isometries, {\it J. Functional Analysis}, {\bf 27} (1978), no. 1, 51-99.

\bibitem{B1949} 
A. Beurling, On two problems concerning linear transformations in Hilbert space, {\it Acta Math.}, {\bf 81} (1948), 239--255.

\bibitem{CJP2023} 
S. Chavan, S. Jain and P. Pramanick, Operator theory on generalized Hartogs triangles, {\it Res. Math. Sci.}, {\bf 10} (2023), no. 3, Paper No. 34, 39.

\bibitem{CIL2018} 
Y. Chen, K. J. Izuchi and Y. J. Lee, A Coburn type theorem on the Hardy space of the bidisk, {\it J. Math. Anal. Appl.}, {\bf 466} (2018), no. 1, 1043--1059.

\bibitem{Clark}
D. N. Clark, Restrictions of $H^p$ functions in the polydisk, {\it Amer. J. Math.}, {\bf 110} (1988), no. 6, 1119--1152.

\bibitem{DaGoSa20}
B. K. Das, S. Gorai and J. Sarkar, On quotient modules of $H^2(\mathbb{D}^n)$: essential normality and boundary representations, {\it Proc. Roy. Soc. Edinburgh Sect. A}, {\bf 150} (2020), no. 3, 1339--1359.

\bibitem{DeSa25}
R. Debnath and S. Sarkar, Two problems on submodules of $H^{2}(\mathbb{D}^{n})$, Canad. J. Math., to appear, arXiv:2406.09245 [math.FA].

\bibitem{DM1993} 
R. G. Douglas and G. Misra, Some calculations for Hilbert modules, {\it J Orissa Math Soc}, {\bf 12-15} (1993-1996), 75--85.

\bibitem{DP1989} 
R. G. Douglas and V. I. Paulsen, {\it Hilbert modules over function algebras}, Pitman Research Notes in Mathematics Series, 217, Longman Scientific \& Technical, Harlow, New York, 1989, vi+130 pp.

\bibitem{DY2000} 
R. G. Douglas and R. Yang, Operator theory in the Hardy space over the bidisk. I., {\it Integral Equations Operator Theory}, {\bf 38} (2000), no. 2, 207--221.

\bibitem{DY1998}
R. G. Douglas and R. Yang, Quotient Hardy modules, {\it Houston J. Math.}, {\bf 24} (1998), no. 3, 507--517.

\bibitem{GMR2016} 
S. R. Garcia, J. Mashreghi and W. T. Ross, {\it Introduction to model spaces and their operators}, Cambridge Studies in Advanced Mathematics, 148, Cambridge Univ. Press, Cambridge, 2016, xv+322 pp.

\bibitem{GM1988} 
P. Ghatage and V. Mandrekar, On Beurling type invariant subspaces of $L^2({\bf T}^2)$ and their equivalence, {\it J. Operator Theory}, {\bf 20} (1988), no. 1, 83--89.

\bibitem{GW2009} 
K. Guo and K. Wang, Beurling type quotient modules over the bidisk and boundary representations, {\it J. Funct. Anal.}, {\bf 257} (2009), no. 10, 3218--3238.

\bibitem{GuWa2007}
K. Guo and P. Wang, Essentially normal Hilbert modules and $K$-homology. III. Homogenous quotient modules of Hardy modules on the bidisk, {\it Sci. China Ser. A}, {\bf 50} (2007), no. 3, 387--411.

\bibitem{III2011}
K. J. Izuchi, K. H. Izuchi and Y. Izuchi, Blaschke products and the rank of backward shift invariant subspaces over the bidisk, {\it J. Funct. Anal.}, {\bf 261} (2011), no. 6, 1457--1468.

\bibitem{INS2004} 
K. Izuchi, T. Nakazi and M. Seto, Backward shift invariant subspaces in the bidisc. II, {\it J. Operator Theory}, {\bf 51} (2004), no. 2, 361--376.

\bibitem{JP2024} 
S. Jain and P. Pramanick, Toeplitz operators on the $n$-dimensional Hartogs triangle, {\it J. Operator Theory}, to appear,	arXiv:2410.00791 [math.FA].

\bibitem{MaSa2019}
A. Maji, J. Sarkar and T. R. Sankar, Pairs of commuting isometries, I, {\it Studia Math.}, {\bf 248} (2019), no. 2, 171--189.

\bibitem{M1988}
V. Mandrekar, The validity of Beurling theorems in polydiscs, {\it Proc. Amer. Math. Soc.}, {\bf 103} (1988), no. 1, 145--148.

\bibitem{Mo21}
A. Monguzzi, Holomorphic function spaces on the Hartogs triangle, {\it Math. Nachr.}, {\bf 294} (2021), no. 11, 2209--2231.

\bibitem{NaFo-Book}
B. Sz.-Nagy, C. Foias, H. Bercovici and L. K\'erchy, {\it Harmonic analysis of operators on Hilbert space}, Universitext, Springer, New York, 2010, xiv+474 pp. 

\bibitem{Ru69}
W. Rudin, {\it Function theory in polydiscs}, W. A. Benjamin, Inc., New York-Amsterdam, 1969, vii+188 pp.

\bibitem{Sarkar14}
J. Sarkar, Jordan blocks of $H^2(\mathbb{D}^n)$, {\it J. Operator Theory}, {\bf 72} (2014), no. 2, 371--385.

\bibitem{S2005} 
V. Scheidemann, {\it Introduction to complex analysis in several variables}, Birkh\"auser Verlag, Basel 2005, viii+171 pp. 

\bibitem{S2015} 
M. C. Shaw, The Hartogs triangle in complex analysis, {\it Geometry and topology of submanifolds and currents}, Contemp. Math., 646, Amer. Math. Soc., 2015, 105--115.

\bibitem{S1980}
M. S\l oci\'nski, On the Wold-type decomposition of a pair of commuting isometries, {\it Ann. Polon. Math.}, {\bf 37} (1980), no. 3, 255--262.

\bibitem{Wold}
H. Wold, {\it A study in the analysis of stationary time series}, Almqvist \& Wiksell, Stockholm, 1954, viii+236 pp.


\end{thebibliography}
\end{document}